\title{Representation of harmonic functions with respect to subordinate Brownian motion}
\author{Ivan Bio\v{c}i\'{c}}
\date{}
\DeclareMathOperator\supp{supp}
\newtheorem{thm}{Theorem}[section]
\newtheorem{prop}[thm]{Proposition}
\newtheorem{cor}[thm]{Corollary}
\newtheorem{defn}[thm]{Definition}
\newtheorem{lem}[thm]{Lemma}
\theoremstyle{definition}
\newtheorem{rem}[thm]{Remark}
\newtheorem*{assumption*}{\assumptionnumber}
\providecommand{\assumptionnumber}{}
\newenvironment{assumption}[2]
{%
	\renewcommand{\assumptionnumber}{(\textbf{#1#2})}%
	\begin{assumption*}%
		\protected@edef\@currentlabel{(\textbf{#1#2})}%
	}
	{%
	\end{assumption*}
}
\newcommand{\R}{\mathbb{R}}
\newcommand{\N}{\mathbb{N}}
\newcommand{\p}{\mathbb{P}}
\newcommand{\ex}{\mathbb{E}}
\newcommand{\subsub}{\subset\subset}
\newcommand{\BB}{\mathcal{B}}
\newcommand{\setword}[2]{%
	\phantomsection
	#1\def\@currentlabel{\unexpanded{#1}}\phantomsection\label{#2}%
}
\begin{document}
	\maketitle

	\begin{abstract}
		In this article we prove a representation formula for non-negative generalized harmonic functions with respect to a subordinate Brownian motion in a general open set $D\subset \R^d$. We also study oscillation properties of quotients of Poisson integrals and prove that oscillation can be uniformly tamed.
	\end{abstract}
	
	\bigskip
	\noindent {\bf AMS 2020 Mathematics Subject Classification}: Primary 31B10; Secondary  31B05, 31B25, 60J45.

	\bigskip\noindent
	{\bf Keywords and phrases}: Representation of harmonic functions, relative oscillation, subordinate Brownian motions
	
	\section{Introduction}
	The goal of this article is to prove a representation formula for non-negative harmonic functions with respect to a class of subordinate Brownian motions in a general open set $D\subset \R^d$, $d\ge 2$, where the Laplace exponent of the corresponding subordinator is a complete Bernstein function satisfying certain weak scaling conditions. In this setting, the novelty is  that we look at pairs $(f,\lambda)$ such that $f$ is a function on $D$ and $\lambda$ is a measure on $D^c$ that we call, following \cite{bogdan_est_and_struct}, functions with outer charge. We prove the following result: if $f$ is a non-negative harmonic function in $D$ with  a  non-negative outer charge $\lambda$, then there is a unique finite measure $\mu$ on $\partial D$ such that
	\begin{align}\phantomsection\label{eq:repr-pocetak}
		f=P_D\lambda+M_D\mu,\quad \text{in } D.
	\end{align}
	Here $P_D\lambda$ denotes the Poisson integral of the measure $\lambda$ and $M_D\mu$ the Martin integral of the measure $\mu$, see Theorem \ref{t:reprezentacija_L_harm}. Such representation  was proved for the case of the isotropic $\alpha$-stable process in \cite{bogdan_est_and_struct} more than 10 years ago. A similar representation for   functions (in the classical sense)  was proved recently  for more general Markov processes   in bounded open sets in \cite{juscisin_kwasina_martin_kernels}, and in {\it nice} and general open sets in \cite{vondra_RMI_2018}. Analogous result  for non-negative classical harmonic functions on the ball $B(x,r)$, i.e. harmonic functions with respect to the Brownian motion,  is better known as Riesz-Herglotz theorem, cf. \cite{armitage2012classical}. In the article the case $d=1$ is excluded since it would require somewhat different potential theoretic methods.
	
	On the way to obtaining the representation, motivated by results in \cite{bogdan_est_and_struct}, we study  the  relative oscillation of  the  quotient of Poisson integrals. The novelty of this results is that we prove that the oscillation can be uniformly tamed. To be more precise, for a positive function $f$ on a set $D$ we define the relative oscillation of the function $f$ by 
	$$\mathrm{RO}_Df\coloneqq \frac{\sup_D f}{\inf_D f}.$$
	We prove that for every $\eta>0$ there is $\delta>0$ such that for every $D\subset B(0,R)$ and measures $\lambda_1$ and $\lambda_2$ on $B(0,R)^c$ we have
	$$ \mathrm{RO}_{D\cap B(0, \delta)}\frac{P_D\lambda_1}{P_D\lambda_2}\le 1+\eta,$$ see Lemma \ref{lemma 8}. Uniformity lies in the fact  that $\delta$ is independent of the set $D$ and the measures $\lambda_1$ and $\lambda_2$. Similar claims on the relative oscillation of harmonic functions were recently proved for more general processes in \cite[Proposition 2.5 \& Proposition 2.11]{vondra_RMI_2018} and \cite[Theorem 2.4 \& Theorem 2.8]{vondra_fm_2016} but the claims lack the aforementioned uniformity.
	
	In the article we also study the  boundary trace  operator $W_D$, see Definition \ref{def_1.13}. The operator $W_D$ was introduced in \cite{bogdan_et_al_19} building on results in \cite{bogdan_est_and_struct}. In \cite{bogdan_et_al_19} it plays a significant role in the semilinear Dirichlet problem for the fractional Laplacian. We generalize the operator for the case of the subordinate Brownian motion and use it as a tool to obtain  the  finite measure for the Martin integral in the representation.

	Motivated by the article \cite{bogdan_est_and_struct} where harmonic functions with outer charge were introduced for the case of the isotropic $\alpha$-stable process, we use the same concept to define $L$-harmonic functions with outer charge, see Definition \ref{d:L-harmonic function}. The letter $L$ stands for the integrodifferential operator $L$ which generates the subordinate Brownian motion, see \eqref{operator L}. In Theorem \ref{t:harmon L=0} we prove that $L$ annihilates all $L$-harmonic functions in the weak sense. Also, the novelty of the study of $L$-harmonic functions is that we prove that all such functions are continuous, see Proposition \ref{p:neprekidnost harmonijskih}, whereas in \cite{bogdan_est_and_struct} the continuity condition was used as a part of the definition.  Moreover, motivated by results in \cite{grzywny_potential_kernels}, in Theorem \ref{r:neprekidnost harmoncijskih Cinfty} we prove even stronger result which says that every $L$-harmonic function is infinitely differentiable.

	The article is organized as follows. Below this paragraph we introduce the notation. In Section \ref{s:Preliminaries} we define the process of interest, introduce the Green and  the  Poisson kernels, and state some well-known results on the process that will be needed in the article. In Section \ref{s:Poisson and harmonic} we prove basic results on the Poisson kernel, define $L$-harmonic functions and study their basic properties. In Section \ref{s:Martin kernel} we recall already known facts on the theory of the Martin kernel and connect them to $L$-harmonic functions. Section \ref{s:W_D and representation} begins with results on the boundary trace operator $W_D$. After we prove results on  the  relative oscillations of  the  Poisson integrals, we finish the article by proving the representation formula for non-negative $L$-harmonic functions.

	\paragraph{Notation.}	
	For an open set $D\subset \R^d$: $C(D)$ denotes the set of all continuous functions on $D$, $C^2(D)$ twice continuously differentiable functions on $D$, $C^\infty(D)$ infinitely differentiable functions on $D$, and $C_c^\infty(D)$ infinitely differentiable functions with  compact support on $D$. Furthermore, $L^1(D)$ is the set of all integrable functions on $D$, and $L^1_{loc}(D)$ the set of all locally integrable functions on $D$, with respect to the Lebesgue measure restricted on $D$. If $D=\R^d$ we write $L^1$ and $L^1_{loc}$ instead of $L^1(\R^d)$ and $L^1_{loc}(\R^d)$, respectively. The boundary of the set $D$ is denoted by $\partial D$. Notation $U\subsub D$ means that $U$ is a  nonempty  bounded open set such that $U\subset \overline U\subset D$ where $\overline U$ denotes  the  closure of $U$. By $|x|$ we denote the Euclidean norm of $x\in\R^d$ and $B(x,r)$ denotes  the  ball around $x\in\R^d$ with radius  $r>0$. We abbreviate $B_r:=B(0,r)$. For $A,B\subset \R^d$ let $\delta_A(x)=\inf\{|x-y|:y\in A^c\}$ and $\mathrm{dist}(A,B)=\inf\{|x-y|:x\in A,y\in B\}$. Unimportant constants in the article will be denoted by small letters $c$, $c_1$, $c_2$, $\dots$, and their labeling starts anew in each new statement. By a big letter $C$ we denote some more important constants, where e.g. $C(a,b)$ means that the constant $C$ depends only on parameters $a$ and $b$.  However, the dependence on the dimension $d$ will not be mentioned explicitly. All constants are positive finite numbers.  Furthermore, in what follows  when we say $\nu$ is a measure,  we mean that $\nu$ is a non-negative measure on $\R^d$. By $|\nu|$ we denote the total variation of a signed measure $\nu$. When we say $\nu$ is a signed measure on $D\subset \R^d$, we mean that $\nu$ is a signed measure on $\R^d$ and $|\nu|(D^c)=0$. The  Dirac measure of a point $x\in\R^d$ is denoted by $\delta_x$. Finally, $\BB(\R^d)$ denotes  Borel  measurable sets in $\R^d$,  and we suppose that all functions in the article are Borel functions  and all signed measures are Borel signed measures.

	\section{Preliminaries}\phantomsection\label{s:Preliminaries}
	\subsection{Process and the jumping kernel}
	
	Let $S = (S_t)_{t\ge 0}$ be a subordinator with the Laplace exponent $\phi$, i.e. $S$ is an increasing L\'evy process with $S_0 = 0$ and
	\begin{align*}
		\mathbb{E}[e^{-\lambda S_t}]=e^{-t\phi(\lambda)},\quad \lambda,t\ge 0.
	\end{align*}
	It is well known that $\phi$ is a Bernstein function of the form
	\begin{align}\phantomsection\label{e:laplace exp}
		\phi(\lambda)=b\,\lambda+\int_0^\infty(1-e^{-\lambda t})\mu(dt),\quad \lambda>0,
	\end{align}
	where $b\ge0$ and $\mu$ is a measure on $(0,\infty)$ satisfying $\int_0^\infty (1 \wedge t)\mu(dt) < \infty$. The measure $\mu$ is called the L\'evy measure and $b$ the drift of the subordinator. Throughout this article we suppose that $\phi$ is a complete Bernstein function. This assumption means that $\mu(dt)$ has a density $\mu(t)$ which is a completely monotone function. For details about Bernstein functions see \cite{bernstein}. Also, we suppose that $\phi$ satisfies the following \textit{upper and lower scaling conditions at infinity}:
	\begin{assumption}{H}{1}\phantomsection\label{H1} There exist constants $\delta_1$, $\delta_2$ $\in(0, 1)$ and $a_1, a_2 > 0$  such that
		\begin{align}
			\phi(\lambda r)&\ge a_1\lambda^{\delta_1}\phi(r),\quad \lambda \ge 1,\, r\ge1, \tag{\text{LSC}}\phantomsection\label{LSC}\\
			\phi(\lambda r)&\le a_2\lambda^{\delta_2}\phi(r),\quad \lambda \ge 1,\, r\ge 1. \tag{\text{USC}}\phantomsection\label{USC}
		\end{align}
	\end{assumption}
	\noindent This assumption yields that $b=0$. 
	
	 Suppose that $W=(W_t)_{t\ge 0}$ is a Brownian motion in $\R^d$, $d\ge 2$,  independent of $S$  with the characteristic exponent $\xi\mapsto|\xi|^2$, $\xi\in\R^d$. 
	The process $X=((X_t)_{t\ge 0},(\mathbb{P}_x)_{x\in\R^d})$ defined as $X_t=W_{S_t}$ is called a subordinate Brownian motion in $\R^d$. Here $\mathbb{P}_x$ denotes the probability under which the process $X$ starts from $x\in\R^d$, and  by  $\mathbb{E}_x$  we denote  the corresponding expectation. Under conditions above $X$ is a pure-jump rotationally symmetric L\'evy process with the characteristic exponent $\xi\mapsto\Psi(\xi)=\phi(|\xi|^2)$. The exponent has the following form
	\begin{align*}
		\Psi(\xi)=\phi(|\xi|^2)=\int_{\R^d}\left(1-\cos(\xi\cdot x)\right) J(dx),\quad \xi\in\R^d,
	\end{align*}
	where the measure $J$ satisfies $\int_{\R^d} (1\wedge |x|^2)J(dx)<\infty$ and it is called the L\'evy measure of the process $X$. Also, $J$ has a density given by $J(x)=j(|x|)$, $x\in\R^d$, where
	\begin{align*}
		j(r)\coloneqq \int_0^\infty (4\pi t)^{-d/2} e^{-r^2/(4t)}\mu(t)dt,\quad r>0,
	\end{align*}
	The density $j$ is positive, continuous, decreasing and satisfies $\lim\limits_{r\to\infty}j(r)=0.$

	It is well known that  since   $\phi$ is a complete Bernstein function, there is a constant $C=C(\phi)>0$ such that
	\begin{align}\phantomsection\label{eq:scm_2.12}
		j(r)\le C j(r+1),\quad r\ge1,
	\end{align}
	see e.g. \cite[Eq. (2.12)]{vondra_2012_SCM}. Also, from \cite[Lemma 4.3]{vondra_fm_2016} we have that for every $r_0\in(0,1)$
	\begin{align}\phantomsection\label{eq:derivacija j}
		\lim_{\delta\to0}\sup_{r>r_0}\frac{j(r)}{j(r+\delta)}=1.
	\end{align}
	Using \eqref{eq:derivacija j} we can easily prove the following technical lemma.
	\begin{lem}\phantomsection\label{eq:comparability of j}
		Let $R>0$, $\varepsilon>0$, and $0<q\le1$. There exists $p=p(q,\varepsilon, R)<q$ such that for all $z\in B_{pR}$ and $y\in B_{qR}^c$
		\begin{align*}
			\frac{1}{1+\varepsilon}j(|y|)\le j(|y-z|)\le (1+\varepsilon)j(|y|).
		\end{align*}
	\end{lem}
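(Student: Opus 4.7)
The plan is to reduce the bilateral bound to the one-sided asymptotics already supplied by \eqref{eq:derivacija j}, exploiting that $j$ is decreasing and that $|y-z|$ differs from $|y|$ by at most $|z|\le pR$.

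First I would fix $r_0\coloneqq \tfrac12\min(qR,1)\in(0,1)$. By \eqref{eq:derivacija j} there exists $\delta_0>0$ such that
\begin{align*}
	\sup_{r>r_0}\frac{j(r)}{j(r+\delta)}\le 1+\varepsilon,\qquad \text{for all }0<\delta<\delta_0.
\end{align*}
Then I would choose $p\in(0,q)$ so that both $pR<\delta_0$ and $pR< qR-r_0$ hold; the latter guarantees that once $|y|\ge qR$ we still have $|y|-pR>r_0$, so the range-condition in \eqref{eq:derivacija j} is met.

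For the upper bound, given $z\in B_{pR}$ and $y\in B_{qR}^c$, the triangle inequality yields $|y-z|\ge |y|-pR\ge qR-pR>r_0$, and since $j$ is decreasing,
\begin{align*}
	j(|y-z|)\le j(|y|-pR)=\frac{j(|y|-pR)}{j((|y|-pR)+pR)}\, j(|y|)\le (1+\varepsilon)\,j(|y|),
\end{align*}
using the displayed inequality with $r=|y|-pR>r_0$ and $\delta=pR<\delta_0$. For the lower bound, $|y-z|\le |y|+pR$ gives $j(|y-z|)\ge j(|y|+pR)$, and applying the same inequality with $r=|y|>r_0$ and $\delta=pR$ yields $j(|y|)\le(1+\varepsilon)j(|y|+pR)\le(1+\varepsilon)j(|y-z|)$, i.e.\ $(1+\varepsilon)^{-1}j(|y|)\le j(|y-z|)$.

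There is no real obstacle here; the only point requiring mild care is the choice of $r_0$, because \eqref{eq:derivacija j} is stated for $r_0\in(0,1)$ whereas $qR$ may be arbitrarily small or large. Setting $r_0=\tfrac12\min(qR,1)$ stays inside $(0,1)$ and simultaneously ensures that both $|y|$ and $|y-z|$ exceed $r_0$ after we shrink $p$ appropriately, so that the uniform-in-$r$ control provided by \eqref{eq:derivacija j} applies to both sides.
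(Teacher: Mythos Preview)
Your argument is correct and follows exactly the route the paper indicates: the paper does not write out a proof but simply remarks that the lemma is an easy consequence of \eqref{eq:derivacija j}, which is precisely the reduction you carry out. The care you take in choosing $r_0=\tfrac12\min(qR,1)$ so that both $|y|$ and $|y|-pR$ exceed $r_0$ is the only detail worth noting, and you handle it cleanly.
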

	
	\begin{rem}
		Condition $r\ge 1$ in \eqref{LSC} and \eqref{USC} is important in the sense that  the  scaling is true away from zero. Using the continuity of $\phi$ it is easy to show that if $R_0>0$, then \eqref{LSC} and \eqref{USC} are also valid for $r\ge R_0$ but with different constants $a_1$ and $a_2$ ($\delta_1$ and $\delta_2$ remain the same).
		Similarly, since $j$ is continuous,  inequality  \eqref{eq:scm_2.12}
		holds for $r\ge R_0$ with a different constant $C$.
	\end{rem}
	
	\subsection{Additional assumptions}
	In some results dealing with unbounded sets we will  occasionally make additional assumptions  on the density $j$ and the exponent $\phi$. The first assumption strengthens \ref{H1}.
	\begin{assumption}{H}{2}(Global scaling condition)\phantomsection\label{H2}
		There exist constants $\delta_1$, $\delta_2$ $\in(0, 1)$ and $a_1, a_2 > 0$  such that 
		\begin{align}
			\phi(\lambda r)&\ge a_1\lambda^{\delta_1}\phi(r),\quad \lambda \ge 1,\, r>0, \tag{\text{GLSC}}\phantomsection\label{GLSC}\\
			\phi(\lambda r)&\le a_2\lambda^{\delta_2}\phi(r),\quad \lambda \ge 1,\, r>0. \tag{\text{GUSC}}\phantomsection\label{GUSC}
		\end{align}
	\end{assumption}

	The second  assumption comes as an addition to Lemma \ref{eq:comparability of j}.
	\begin{assumption}{E}{}\phantomsection\label{E2}
		For every $R\ge 1$, $\varepsilon>0$, and $q\in(1,\infty)$, there exists $p=p(q,\varepsilon, R)>q$ such that for all $z\in B_{pR}^c$ and $y\in B_{qR}$
		\begin{align*}
			\frac{1}{1+\varepsilon}j(|z|)\le j(|y-z|)\le (1+\varepsilon)j(|z|).
		\end{align*}
	\end{assumption}
	To the best of our knowledge it is not clear if the assumption \ref{E2} is true for every density $j$ generated by a complete Bernstein function. However, it is known that if for some $\alpha\in(0,2)$ we have $\lim_{\lambda\to 0}\frac{\phi(\lambda^2)}{\lambda^\alpha l(\lambda)}=1$, where $l$ is a slowly varying function at 0, then the condition \ref{E2} is satisfied, see \cite[Section 4.2]{vondra_fm_2016}.
	
	Note that the isotropic $\alpha$-stable process, $\alpha\in(0,2)$, satisfies all mentioned assumptions, since in this case we have $\phi(\lambda)=\lambda^{\alpha/2}$ and $j(r)=c(d,\alpha)\frac{1}{r^{d+\alpha}}$.

	\subsection{Operator $L$}\phantomsection\label{ss:harm functions and operators}
	
	For $x\in\R^d$ and $u:\R^d\to\R$ we let
	\begin{align}\phantomsection\label{operator L}
		Lu(x)&\coloneqq \mathrm{P.V.}\int\limits_{\R^d}(u(y)-u(x))j(|y-x|)dy\\
		&\coloneqq\lim\limits_{\varepsilon\to 0^+}\int_{|y-x| >  \varepsilon}(u(y)-u(x))j(|y-x|)dy,\nonumber
	\end{align}
	whenever the limit above exists. In the case of the isotropic $\alpha$-stable process the operator $L$ is the fractional Laplacian $\Delta^{\alpha/2}$.
	
	If $\varphi\in C_c^2(\R^d)$, i.e. $\varphi$ is a twice continuously differentiable function with compact support, then $L\varphi(x)$ exists for every $x\in \R^d$. In fact, if $\varphi\in C_c^2(\R^d)$, then using Taylor's theorem it is easy to see  that there is a constant $C= C(K,\phi)>0$,  where $\supp \varphi\subset K\subsub\R^d$, such that
	\begin{align}\phantomsection\label{e: L C_c}
		|L\varphi(x)|\le C ||\varphi||_{C^2(\R^d)} (1\wedge j(|x|)),\quad x\in\R^d.
	\end{align}
	Here $||\cdot||_{C^2(\R^d)}$ denotes the standard norm for twice differentiable functions.
	
	For functions $u\in \mathcal{L}^1\coloneqq L^1(\R^d,(1\wedge j(|x|))dx)$ we define  the  distribution $\tilde{L}$ as
	\begin{align*}
		\langle \tilde{L}u,\varphi\rangle\coloneqq \langle u,L\varphi\rangle\coloneqq \int_{\R^d}u(x)L\varphi(x)dx,\quad \varphi\in C_c^\infty(\R^d).
	\end{align*}
	 The condition $u\in \mathcal{L}^1$ is needed to ensure that the integral above is well defined, see \eqref{e: L C_c}. Also, note that since $j$ is positive, we have $\mathcal{L}^1\subset L^1_{loc}$. Following \cite[Section 3]{bogdan1999potential},  it is easy to show that if $u\in C^2(D)\cap \mathcal{ L}^1$, then  $Lu(x)$  exists for every $x\in D$ and $\tilde{L}u=Lu$ as distributions on $D$, i.e.
	\begin{align*}
		\langle \tilde{L}u,\varphi\rangle=\langle Lu,\varphi\rangle,\quad \varphi\in C_c^\infty(D).
	\end{align*}
	Furthermore, we extend the definition of $\tilde{L}$  to  measures in the following way
	\begin{align}\phantomsection\label{eq:extension of L}
		\langle \tilde{L}\lambda,\varphi\rangle\coloneqq\langle \lambda,L\varphi\rangle\coloneqq \int_{\R^d}L\varphi(x)\lambda(dx),
	\end{align} 
	for all signed measures $\lambda$  such that $\int_{\R^d}(1\wedge j(|x|))|\lambda|(dx)<\infty$.

	\subsection{Green and Poisson kernel}
	Since we assume \ref{H1} throughout the article, we have $\int_{\R^d}e^{-t\Psi(\xi)}|\xi|^nd\xi<\infty$, for $t>0$ and $n\in\N$, see \cite[Eq. (3.5)]{vondra_heat}, so $X$ has transition densities  $p(t,x,y)=p(t,y-x)$ given by
	\begin{align*}
		p(t,x)=\frac{1}{(2\pi)^{d}}\int_{\R^d} \cos(x\cdot  \xi) e^{-t\Psi(\xi)}\, d\xi,\quad t>0,\,x\in\R^d.
	\end{align*}
	
	We assume that the process $X$ is transient, i.e. $\p_x(\lim_{t\to\infty}|X_t|=\infty)=1$, $x\in \R^d$. When $d\ge 3$ this  is always true, and for $d=2$  by the Chung-Fuchs condition this means that 
	\begin{align*}
		\int_0^1\frac{1}{\phi(\lambda)}d\lambda<\infty.
	\end{align*}  
	
	We define the potential kernel of $X$, i.e. the Green function of $X$, by
	\begin{align*}
		G(x)\coloneqq\int_{0}^\infty p(t,x)dt, \quad x\in \R^d.
	\end{align*}
	The kernel $G$ is the density of the mean occupation time for $X$, i.e. for $f\ge0$ we have
	\begin{align*}
		\int_{\R^d}G(x-y)f(y)dy=\mathbb{E}_x\left[\int_{0}^{\infty}f(X_t)dt\right],\quad x\in\R^d.
	\end{align*}
	 From \cite[Lemma 3.2(b)]{vondra_global} it follows that for every $M>0$ there is a constant $C=C(\phi,M)>0$ such that 
	\begin{align}\phantomsection\label{eq:G comparability}
		C^{-1} \frac{1}{|x|^d\phi(|x|^{-2})}\le G(x) \le	C \frac{1}{|x|^d\phi(|x|^{-2})},\quad |x|\le M.
	\end{align}
	In particular, $G$ is finite for $x\ne 0$. 
	Further, $G$ is rotationally symmetric and radially decreasing so we will slightly abuse notation by denoting $G(x,y)=G(x-y)=g(|x-y|)$. 
	
	For an open $D\subset \R^d$ set $\tau_D=\inf\{t>0: X_t\notin D\}$. We define the killed process $X^D$ by
	\begin{align*}
		X_t^D\coloneqq\begin{cases}
			X_t,&t<\tau_D,\\
			\partial,&t\ge \tau_D,
		\end{cases}
	\end{align*}
	where $\partial$ is an adjoint point to $\R^d$ called the cemetery. The process $X^D$ has a transition density which is for $t>0$ and  $x,y\in \R^d$  given by
	\begin{align}\phantomsection\label{eq:pD transition density}
		p^D(t,x,y)=p(t,x,y)-\mathbb{E}_x[p(t-\tau_D,X_{\tau_D},y)\mathbf{1}_{\{\tau_D<t\}}].
	\end{align}
	 It follows that $0\le p^D\le p$ and by repeating the proof of \cite[Theorem 2.4]{chung_zhao} we get that $p^D$ is symmetric. Since the process $X$ has right continuous paths, it follows that $p^D(t,x,y)=0$ if $x\in \overline D^c$ or $y\in \overline D^c$.  The Green function of $X^D$ is defined by $G_D(x,y)\coloneqq\int_0^\infty p^D(t,x,y)dt$, $x,y\in \R^d$, which is the density of the mean occupation time for $X^D$, i.e. for $f\ge0$ we have
	\begin{align}\phantomsection\label{eq:GDf expectation}
		\int_{D}G_D(x,y)f(y)dy=\mathbb{E}_x\left[\int_{0}^{\tau_D}f(X_t)dt\right],\quad  x\in \R^d.
	\end{align}
	Note that $G=G_{\R^d}$.
	
	For $x\in\R^d$ the $\mathbb{P}_x$ distribution of $X_{\tau_D}$ is denoted by $\omega_D^x$, i.e.
	\begin{align*}
		\mathbb{P}_x(X_{\tau_D}\in A)=\omega_D^x(A),\quad A\in\mathcal{B}(\R^d).
	\end{align*}
	The measure $\omega_D^x$ is concentrated on $D^c$ and since we are in  the transient case, we have  the following formula for  $x,y\in \R^d$
	\begin{align}\phantomsection\label{eq:G SMP}
		G_D(x,y)=G(x,y)-\mathbb{E}_x[G(X_{\tau_D},y)]=G(x,y)-\int_{D^c}G(w,y)\omega_D^x(dw).
	\end{align}
	It follows from \eqref{eq:pD transition density} that $G_D$ is symmetric and non-negative. On $(D\times D)\setminus\{(x,x):x\in D\}$ the kernel $G_D$ is jointly continuous which can be easily seen via well-known representation of the densities $p(t,x)$ in \cite[Eq. (2.8)]{mimica}.
	Using the strong Markov property and \eqref{eq:G SMP} one can easily show that for all open $U\subset D$ and $x,y\in \R^d$  it holds 
	\begin{align}\phantomsection\label{eq:GD SMP}
		G_D(x,y)=G_U(x,y)+\int_{U^c}G_D(w,y)\omega_U^x(dw).
	\end{align}
	Equation \eqref{eq:pD transition density} also yields that $G_D(x,y)=0$ if $x\in \overline D^c$ or $y\in \overline D^c$. Furthermore, if $y\in\partial D$, then $G_D(x,y)=0$ for all $x\in D$ if and only if $y$ is a {\it regular} point for $D$. A point $x\in \partial D$ is regular for $D$ if $\p_x(\tau_D=0)=1$, i.e. if $\omega^x_D=\delta_x$. A point at $\partial D$ which is not regular is called {\it irregular} and it is well-known that the set of irregular points is polar. This property will be used many times throughout the article.
	
	Equation \eqref{eq:GD SMP} yields that for every $x,y\in\R^d$ and  open   $U\subset D$ we have $G_U(x,y)\le G_D(x,y)$. In fact, if  we have open sets  $D_1\subset D_2\subset \dots \subset D$ and $\cup_n D_n=D$, then $G_{D_n}(x,y)\uparrow G_D(x,y)$, for every $x,y\in \R^d$ except if $x$ or $y$ are irregular for $D$. This follows from \eqref{eq:G SMP},  the  continuity of $G$  off the diagonal  and  the  quasi-left-continuity of $X$.

	For an open $D\subset \R^d$, we define $P_D$, the Poisson kernel of $D$ with respect to $X$, by
	\begin{align}\phantomsection\label{eq:PD defn}
		P_D(x,y)\coloneqq\int_DG_D(x,w)j(|w-y|)dw,\quad (x,y)\in \R^d\times D^c.
	\end{align}
	If $x\in D$ the measure $\omega_D^x$ is absolutely continuous with respect to the Lebesgue measure in the interior of $D^c$. Its Radon-Nikodym derivative is $P_D(x,\cdot)$, see \cite[Eq. (1.1)]{vondra_fm_2016}. Further, if the boundary of $D$ possesses enough regularity, e.g. if $D$ is a Lipschitz set, then
	\begin{align}\phantomsection\label{eq:PD Lipschitz set}
		\omega_D^x(dy)=P_D(x,y)dy,\quad\text{ on the whole $D^c$,}
	\end{align}
	see \cite[Proposition 4.1]{millar1975}.
	
	By integrating \eqref{eq:GD SMP} with respect to $j(|y-z|)dy$ on $\R^d$, with $z\in D^c$, and using Fubini's theorem we get
	\begin{align}\phantomsection\label{eq:PD SMP}
		P_D(x,z)=P_U(x,z)+\int_{D\setminus U}P_D(w,z)\omega_U^x(dw),\quad (x,z)\in U\times D^c,
	\end{align}
	where we used that the sets of irregular points at $\partial D$ and $\partial U$ are polar.

	\begin{defn}\phantomsection\label{d:Green integral}
		Let $D\subset \R^d$ be an open set and $f:D\to[-\infty,\infty]$. The Green potential of $f$ is defined by
		\begin{align}\phantomsection\label{eq:def GDf}
			G_Df(x)&\coloneqq\int_D G_D(x,y)f(y)dy,
		\end{align}
		for all $x\in\R^d$ such that the integral above converges absolutely.
	\end{defn}
	
	\begin{lem}\phantomsection\label{r:konacnost od GDf}
		Let $f\ge 0$. If the integral $\int_DG_D(x_0,y)f(y) dy$  converges at one point $x_0\in D$, then  $G_Df<\infty$ a.e., $G_Df\in \mathcal{L}^1$ and $f\in L^1_{loc}(D)$.
	\end{lem}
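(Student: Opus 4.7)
My plan is to establish the three conclusions---$f \in L^1_{loc}(D)$, $G_D f < \infty$ a.e., and $G_D f \in \mathcal{L}^1$---in turn.

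For $f \in L^1_{loc}(D)$: given any compact $K \subset D$, the function $y \mapsto G_D(x_0, y)$ is continuous on $D \setminus \{x_0\}$, strictly positive, and by the lower estimate in \eqref{eq:G comparability} blows up at $x_0$. Since $K$ lies at positive distance from $\partial D$, $c_K := \inf_{y \in K} G_D(x_0, y) > 0$ and hence $\int_K f(y)\,dy \le c_K^{-1} \int_K G_D(x_0, y) f(y)\,dy < \infty$.

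For $G_D f < \infty$ a.e.\ I would combine Fubini with the interior Harnack inequality. Fix a ball $B = B(x_0, r)$ with $\overline{B(x_0, 2r)} \subset D$. For every $y \in D \setminus \overline{B(x_0, 2r)}$ the function $G_D(\cdot, y)$ is non-negative and $L$-harmonic on $B(x_0, 2r)$, so Harnack (valid under \ref{H1}) provides $c_1 > 0$ with $G_D(x, y) \le c_1 G_D(x_0, y)$ for all $x \in B$. Splitting the Fubini integral according to whether $y \in \overline{B(x_0, 2r)}$ or not,
\begin{align*}
\int_B G_D f(x)\,dx &\le c_1 |B| \int_D G_D(x_0, y) f(y)\,dy + C \int_{B(x_0, 2r) \cap D} f(y)\,dy,
\end{align*}
where the near piece uses $G_D(x,y) \le g(|x-y|)$ together with the local integrability of $g$, giving a uniform bound on $\int_B g(|x-y|)\,dx$ for $y \in \overline{B(x_0, 2r)}$. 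Both terms are finite by hypothesis and by the previous step, so $G_D f \in L^1(B)$, in particular $G_D f < \infty$ a.e.\ on $B$. A standard Harnack-chain covering of $D$ by such balls then extends the bound to $G_D f \in L^1_{loc}(D)$.

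For $G_D f \in \mathcal{L}^1$: since $G_D f$ vanishes off $\overline{D}$ apart from a polar set, the claim amounts to $\int_D (1 \wedge j(|x|)) G_D f(x)\,dx < \infty$. By Fubini this equals $\int_D f(y) \int_D G_D(x,y)(1 \wedge j(|x|))\,dx\,dy$, and I would re-run the Harnack-chain argument of the previous step on a countable (Whitney-type) cover of $D$---now carrying the weight $(1 \wedge j(|x|))$ along on each ball---to bound the inner integral by $c_\infty G_D(x_0, y)$ plus a remainder supported in a bounded subset of $D$ (absorbed by Step 1). Summability of the resulting series uses $\int_{\R^d}(1 \wedge j(|x|))\,dx < \infty$, a consequence of the Lévy-measure condition $\int (1 \wedge |x|^2) j(|x|)\,dx < \infty$, which dominates the growth of the Harnack-chain constants far from $x_0$. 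The hard part is precisely this last balancing in the unbounded case; Steps 1 and 2 are essentially routine given continuity and strict positivity of $G_D$ off the diagonal together with the sharp local two-sided bound \eqref{eq:G comparability}.
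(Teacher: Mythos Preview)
Your Steps~1 and~2 are fine and in fact slightly streamline the paper's order of argument (the paper deduces $f\in L^1_{loc}(D)$ only \emph{after} establishing $G_Df<\infty$ a.e., whereas you get it directly from the strict positivity of $G_D(x_0,\cdot)$ on compacta). The only minor omission is that the strict positivity of $G_D(x_0,y)$ on $D$ is itself a consequence of Harnack and should be mentioned.

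Step~3, however, has a genuine gap. Your plan is to cover $D$ by Whitney balls $B_k$, use a Harnack chain of length $n_k$ from $x_0$ to $B_k$ to obtain $G_D(x,y)\le C^{n_k}G_D(x_0,y)$ for $x\in B_k$, and then sum
\[
\sum_k C^{n_k}\int_{B_k}\bigl(1\wedge j(|x|)\bigr)\,dx.
\]
You assert that the finiteness of $\int_{\R^d}(1\wedge j(|x|))\,dx$ ``dominates the growth of the Harnack-chain constants far from $x_0$''. This is not true for general open sets $D$: the Harnack constant along a chain grows \emph{exponentially} in the chain length, while under \ref{H1} the weight $1\wedge j(|x|)$ decays only polynomially (for the stable case, like $|x|^{-d-\alpha}$). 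In a domain such as an infinite tube, the chain length to reach distance $R$ from $x_0$ grows linearly in $R$, so $C^{n_k}$ blows up like $e^{cR}$, and no polynomial decay of $j$ can compensate. The ``balancing'' you allude to simply does not hold without extra geometric assumptions on $D$ (e.g.\ a uniform interior corkscrew and Harnack-chain condition with logarithmic chain length), which the lemma does not impose.

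The paper avoids this entirely by a probabilistic one-liner that you are missing. Since $G_Df$ is excessive, for a ball $B=B(x_0,s)\subset D$ the strong Markov property gives
\[
G_Df(x_0)\ge \mathbb{E}_{x_0}\bigl[G_Df(X_{\tau_B})\bigr]=\int_{B^c}G_Df(y)\,P_B(x_0,y)\,dy.
\]
The crucial observation is then the \emph{pointwise} lower bound $P_B(x_0,y)\ge c\,(1\wedge j(|y|))$ for $y\in\overline B^c$, which comes from $P_B(x_0,y)\ge c_1\,j(|x_0-y|)$ together with \eqref{eq:scm_2.12}. This immediately yields $\int_{B^c}G_Df(y)(1\wedge j(|y|))\,dy<\infty$; one more application from a point $\tilde x\in D\setminus\overline B$ with $G_Df(\tilde x)<\infty$ covers $B$ as well. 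No chaining, no summation of constants---the Poisson kernel of a single ball already produces exactly the $\mathcal L^1$ weight you need.
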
		
	\begin{proof}
		Let $0< s<\delta_D(x_0)$, and denote just for this  proof  $B=B(x_0, s)$.  Using the strong Markov property  we have
		\begin{align}
			\begin{split}\phantomsection\label{eq:GD finite}
				\infty>G_Df(x_0)&
				\ge\mathbb{E}_{x_0}\left[\int\limits_{\tau_B}^{\tau_D}f(X_t)dt\right]=\mathbb{E}_{x_0}\left[\mathbb{E}_{X_{\tau_B}}\left[\int\limits_0^{\tau_D}f(X_t)dt\right]\right]\\
				&=\mathbb{E}_{x_0}[G_Df(X_{\tau_B})]=\int_{B^c}G_Df(y)P_B(x_0,y)dy.
			\end{split}
		\end{align}
		\noindent From  \cite[Lemma 2.2]{grzywny_potential_kernels} we have that $P_{ B}(x_0,y)\ge  c_1 j(|x_0-y|)$, $y\in \overline{B}^c$. Furthermore, let $r_0\in(1,\infty)$ such that $j(|y|)\le 1$, for $|y|\ge r_0$. Inequality \eqref{eq:scm_2.12} implies that there is a constant $ c_2>0$ such that	$j(|y|)\le c_2 j(|x_0-y|)$, for all $|y|\ge r_0$. Let $m\coloneqq \inf\{j(|x_0-y|):y\in B^c, |y|\le r_0\}>0$. Thus, for $y\in \overline B^c$ we have
		\begin{align*}
			1\wedge j(|y|)
			\le \max\{c_2,1/m\} j(|x_0-y|).
		\end{align*}
		Therefore, there is $c_3>0$ such that $P_B(x_0,y)\ge c_3 (1\wedge j(|y|))>0$, $y\in \overline B^c$.  This yields $$\int_{B^c}G_Df(y)(1\wedge j(|y|))dy<\infty,$$ hence $G_Df<\infty$ a.e. on $B^c$. Starting the calculations again from the point $\tilde x\in D\setminus \overline B$ such that $G_Df(\tilde x)<\infty$, we also get $\int_{B}G_Df(y)(1\wedge j(|y|))dy<\infty$. Hence, $G_Df<\infty$ a.e. and $G_Df\in \mathcal{L}^1$. 
		
		To prove that $f\in L^1_{loc}(D)$ take $U\subsub D$ and $x\in D\setminus \overline U$ such that $G_Df(x)<\infty$. Since the function $y\to G_D(x,y)$ is bounded from below and above on $U$ by the Harnack inequality, see \cite[Theorem 7]{grzywny_harnack}, we have the claim.
		
	\end{proof}
	
		The following proposition is an extension of \cite[Lemma 5.3]{bogdan2000} to more general non-local operators.
	\begin{prop}\phantomsection\label{p:L(Gf)=-f}
		 Let $D$ be an open set.  If $f:D\to[-\infty,\infty]$ satisfies $G_D|f|(x)<\infty$ for some $x\in D$, then $\tilde{L}(G_Df)=-f$ in $D$.
	\end{prop}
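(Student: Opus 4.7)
My plan is to reduce to the pointwise ``Green function'' identity
\begin{align*}
G_D(L\varphi)(y) = -\varphi(y), \qquad y\in D,\ \varphi\in C_c^\infty(D),
\end{align*}
which expresses that $G_D$ inverts $-L$ on smooth compactly supported test functions, and then to deduce the statement for general $f$ by Fubini together with symmetry of $G_D$. The distributional identity $\tilde{L}(G_Df)=-f$ on $D$ unfolds to $\langle G_Df,L\varphi\rangle = -\langle f,\varphi\rangle$ for every $\varphi\in C_c^\infty(D)$, so this reduction is the natural one and mirrors the strategy of \cite[Lemma 5.3]{bogdan2000} for the fractional Laplacian.

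For the pointwise identity, extend $\varphi\in C_c^\infty(D)$ by zero to $C_c^\infty(\R^d)$. By \eqref{e: L C_c} one has $|L\varphi(x)|\le C(1\wedge j(|x|))$ on $\R^d$, so $L\varphi\in L^\infty\cap L^1$ (the $L^1$ bound uses the L\'evy condition $\int_{|x|\ge 1} j(|x|)\,dx<\infty$). Applying Dynkin's formula at the stopping time $\tau_D\wedge t$ gives
\begin{align*}
\mathbb{E}_y\!\left[\varphi(X_{\tau_D\wedge t})\right] - \varphi(y) = \mathbb{E}_y\!\left[\int_0^{\tau_D\wedge t} L\varphi(X_s)\,ds\right].
\end{align*}
Letting $t\to\infty$: on $\{\tau_D<\infty\}$ we have $X_{\tau_D}\in D^c$, so $\varphi(X_{\tau_D})=0$; on $\{\tau_D=\infty\}$ transience forces $|X_t|\to\infty$ and hence $\varphi(X_t)\to 0$. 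Boundedness of $\varphi$ and dominated convergence drive the left-hand side to $-\varphi(y)$. For the right-hand side, the dominant $\int_0^{\tau_D}|L\varphi(X_s)|\,ds$ has $\mathbb{E}_y$-expectation at most $(G*|L\varphi|)(y)$, which is finite: splitting into $|y-x|\le 1$ and $|y-x|>1$ yields the bound $\|L\varphi\|_\infty\!\int_{|z|\le 1}G(z)\,dz + g(1)\|L\varphi\|_1$, with the first integral finite since \eqref{eq:G comparability} places $G$ in $L^1_{loc}$ and the second finite because $G$ is radially decreasing. Dominated convergence then identifies the limit as $G_D(L\varphi)(y)$, completing the proof of the identity.

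Given the identity, the reduction is a short Fubini argument. The hypothesis together with Lemma \ref{r:konacnost od GDf} gives $G_D|f|\in \mathcal{L}^1$, and combining with $|L\varphi|\le C(1\wedge j(|\cdot|))$ yields
\begin{align*}
\int_D\!\int_D G_D(x,y)|f(y)||L\varphi(x)|\,dy\,dx = \int_D |L\varphi(x)|\,G_D|f|(x)\,dx \le C\!\int_D (1\wedge j(|x|))\,G_D|f|(x)\,dx < \infty.
\end{align*}
Fubini and symmetry of $G_D$ then give
\begin{align*}
\langle G_Df,L\varphi\rangle = \int_D f(y)\int_D G_D(y,x)L\varphi(x)\,dx\,dy = -\int_D f(y)\varphi(y)\,dy = -\langle f,\varphi\rangle,
\end{align*}
which is $\tilde{L}(G_Df)=-f$ in $D$.

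The main obstacle is the Dynkin step, specifically the passage to the limit $t\to\infty$ on $\{\tau_D=\infty\}$: transience is essential to kill the boundary term on the left, while the quantitative decay $|L\varphi|\le C(1\wedge j(|\cdot|))$ from \eqref{e: L C_c} is what makes the dominated convergence argument go through on the right in this non-local setting. Once the pointwise identity is in hand, everything else is bookkeeping that only invokes Lemma \ref{r:konacnost od GDf}.
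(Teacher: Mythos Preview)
Your proof is correct, and it takes a genuinely different route from the paper's.

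The paper's argument quotes \cite[Lemma 3.5]{moritz} for the base case of bounded $D$ with $f\in L^1(D)$, and then runs two monotone approximation steps: first $f_n=f\mathbf 1_{K_n}\uparrow f$ (using $G_Df_n\to G_Df$ in $\mathcal L^1$ by Lemma \ref{r:konacnost od GDf} and dominated convergence) to pass from $L^1(D)$ to $L^1_{loc}(D)$, and then $D_n\uparrow D$ to remove the boundedness assumption on $D$. Your argument instead establishes the dual pointwise identity $G_D(L\varphi)=-\varphi$ on $D$ directly from Dynkin's formula, using transience to kill the left-hand boundary term on $\{\tau_D=\infty\}$ and the decay \eqref{e: L C_c} together with $G\in L^1_{loc}$ and $L\varphi\in L^1$ to dominate the right-hand side; a single Fubini swap (justified via $G_D|f|\in\mathcal L^1$ from Lemma \ref{r:konacnost od GDf} and $|L\varphi|\le C(1\wedge j(|\cdot|))$) then yields the distributional statement in one stroke. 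Your approach is self-contained and makes the role of transience visible, handling bounded and unbounded $D$ uniformly without any approximation; the paper's approach is more modular but outsources the analytic core to the external reference. Both arguments ultimately lean on Lemma \ref{r:konacnost od GDf} in the same way.
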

	\begin{proof}
		 In \cite[Lemma 3.5]{moritz} the claim was proved for bounded $D$ and for $f\in L^1(D)$. Recall that  Lemma  \ref{r:konacnost od GDf} yields that $G_Df$ is well defined almost everywhere and  $f\in L^1_{loc}(D)$. Without loss of generality we can assume that $f\ge0$. 
		
		Suppose that $D$ is bounded and $f\in L^1_{loc}(D)$. There is an increasing sequence of precompact sets $(K_n)_n$ in $D$ such that $\cup_{n}K_n=D$. Define $f_n\coloneqq f\mathbf{1}_{K_n}\in L^1$. Obviously, $G_Df_n\uparrow G_Df$  a.e.  and also in $\mathcal{L}^1$ due to  Lemma \ref{r:konacnost od GDf}  and the dominated convergence theorem. Hence,  due to \eqref{e: L C_c}  for all $\varphi\in C^\infty_c(D)$ we get
		\begin{align*}
			\langle \tilde{L}G_Df,\varphi\rangle&=\langle G_Df,L\varphi\rangle=\lim_{n\to\infty} \langle G_Df_n,L\varphi\rangle\\
			&=\lim_{n\to\infty}- \langle f_n,\varphi\rangle= -\langle f,\varphi\rangle.
		\end{align*}
		Now take $D$ unbounded and $f\in L^1_{loc}(D)$. There is an increasing sequence of open precompact sets $(D_n)_n$ in $D$ such that $\cup_{n}D_n=D$. Obviously, $G_{D_n}f\uparrow G_Df$  a.e.  and in $\mathcal{L}^1$. Take any $\varphi\in C^\infty_c(D)$. There is $n_0\in \N$ such that for all $n\ge n_0$ we have  $\varphi\in C^\infty_c(D_n)$. Hence,
		\begin{align*}
			\langle \tilde{L}G_Df,\varphi\rangle&=\langle G_Df,L\varphi\rangle=\lim_{n\to\infty} \langle G_{D_n}f,L\varphi\rangle\\
			&=\lim_{n\to\infty}- \langle f\mathbf{1}_{D_n},\varphi\rangle= -\langle f,\varphi\rangle.
		\end{align*}
	\end{proof}

	\section{Poisson kernel and $L$-harmonic functions}\phantomsection\label{s:Poisson and harmonic}
	
	\begin{prop}\phantomsection\label{Poisson_continuity}
		Let $D$ be an open set. Then $P_D:D\times\overline{D}^c\to(0,\infty)$ is jointly continuous.
	\end{prop}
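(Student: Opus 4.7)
The plan is to establish joint continuity by dominated convergence, after splitting the Poisson integral into a \emph{near} piece that carries the singularity of $G_D$ at the diagonal and a \emph{far} piece whose integrability is governed by the decay of $j$. First I fix $(x_0, y_0) \in D \times \overline D^c$ and choose $r \in (0, 1/2)$ small enough that $B(x_0, 4r) \subset D$ and $B(y_0, 4r) \cap \overline D = \emptyset$; then $|x_0 - y_0| \ge 4r$, and any $(x, y)$ with $|x - x_0| < r/2$ and $|y - y_0| < r/2$ satisfies $x \in D$, $y \in \overline D^c$. I write
\begin{align*}
P_D(x, y) = I_1(x, y) + I_2(x, y),
\end{align*}
where $I_1$ integrates over $D \cap B(x_0, 2r)$ and $I_2$ over $D \setminus B(x_0, 2r)$.

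On the far piece the estimates are uniform: $G_D(x, w) \le g(|x-w|) \le g(r)$ by monotonicity of $g$ and $|x-w| \ge 3r/2$, while $|w - y_0| \ge 4r$ for $w \in D$ gives $|w - y| \ge |w - y_0| - r/2 \ge |w - y_0|/2$, hence $j(|w-y|) \le j(|w-y_0|/2)$. The dominating function $g(r)\, j(|w - y_0|/2)\, \mathbf{1}_{D \setminus B(x_0, 2r)}$ is integrable: after translation its integral is bounded by $2^d g(r) \int_{|z| \ge 2r} j(|z|)\,dz$, and this is finite since $j$ is continuous (hence bounded) on $[2r, 1]$ and the L\'evy-measure condition yields $\int_{|z| \ge 1} j(|z|)\,dz < \infty$. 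Ordinary dominated convergence, combined with continuity of $j$ on $(0, \infty)$ and the joint continuity of $G_D$ off the diagonal, gives $I_2(x_n, y_n) \to I_2(x_0, y_0)$ whenever $(x_n, y_n) \to (x_0, y_0)$ in this neighborhood.

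The near piece is the main obstacle: $j(|w - y|) \le j(r)$ uniformly, but the bound $G_D(x, w) \le g(|x-w|)$ has a singularity at $w = x$ that moves with $x$, so no single $L^1$ majorant dominates all $g(|x_n - w|)$ at once. I resolve this with the generalized (Vitali-type) dominated convergence theorem, using the varying dominants $h_n(w) = j(r)\, g(|x_n - w|)\, \mathbf{1}_{B(x_0, 2r)}(w)$. Local integrability of $g$ follows from the upper bound in \eqref{eq:G comparability} together with the scaling \ref{H1}, which give $g(s) \le C s^{-d + 2\delta_1}$ near $0$; the substitution $z = w - x_n$ then shows $\int h_n\,dw \to \int h\,dw$ for the natural limit $h(w) = j(r)\, g(|x_0 - w|)\, \mathbf{1}_{B(x_0, 2r)}(w)$, by dominated convergence in $z$ and translation invariance of Lebesgue measure. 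Pointwise convergence of the integrand off the null set $\{x_0\}$ follows from the cited continuity properties, so the generalized DCT yields $I_1(x_n, y_n) \to I_1(x_0, y_0)$.

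Finiteness of $P_D$ throughout $D \times \overline D^c$ is a byproduct of the two bounds above. For strict positivity I would apply the strong-Markov identity \eqref{eq:PD SMP} with $U = B(x_0, r) \subset D$ to obtain $P_D(x_0, y_0) \ge P_U(x_0, y_0)$, and then invoke the lower bound $P_U(x_0, y_0) \ge c\, j(|x_0 - y_0|) > 0$ from \cite[Lemma 2.2]{grzywny_potential_kernels}, already used in the proof of Lemma~\ref{r:konacnost od GDf}.
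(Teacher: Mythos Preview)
Your argument is correct and follows the same overall architecture as the paper's proof: both split $P_D(x,y)=\int_D G_D(x,w)j(|w-y|)\,dw$ into a \emph{near} part around $x_0$ and a \emph{far} part, and both handle the far part by dominated convergence. The difference lies in the near part. The paper does not fix the radius of the near ball; instead it shows that for \emph{any} $\delta$ the near contribution is bounded, uniformly in $(x,y)$ close to $(x_0,y_0)$, by $j(\varepsilon)\,c\int_0^{3\delta}\frac{dr}{r\phi(r^{-2})}$, and then lets $\delta\to 0$ in a classical $\varepsilon/3$ argument. You instead keep the near radius fixed and invoke the generalized (Vitali) dominated convergence theorem with the moving dominants $h_n(w)=j(r)\,g(|x_n-w|)\,\mathbf 1_{B(x_0,2r)}(w)$, verifying $\int h_n\to\int h$ via the translation $z=w-x_n$ and local integrability of $g$. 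Your route avoids the two-parameter limit but uses a slightly less standard convergence theorem; the paper's route is more elementary but requires the extra $\delta\to 0$ step. A minor point: for the far dominant the paper uses the doubling-type estimate \eqref{eq:scm_2.12} on $j$, whereas your bound $j(|w-y|)\le j(|w-y_0|/2)$ relies only on monotonicity, which is cleaner. Your positivity argument via $P_D\ge P_{B(x_0,r)}\ge c\,j$ is the one implicitly used elsewhere in the paper as well.
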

	\begin{proof}
		We imitate the proof of the similar claim for the isotropic $\alpha$-stable process, see \cite[Theorem 5.7]{vondra_2002}. Let $(x_n)_n\subset D$ and $(z_n)_n\subset \overline{D}^c$ such that $x_n\to x\in D$, and $z_n\to z\in\overline{D}^c$. Let $0<\varepsilon,\delta<1$ such that $\delta_D(x)>2\delta$ and $\delta_{D^c}(z)>2\varepsilon$. Then for all large enough $n\in\N$ we have $\delta_D(x_n)>\delta$ and $\delta_{D^c}(z_n)>\varepsilon$. We have  by \eqref{eq:PD defn}
		\begin{align*}
			&|P_D(x_n,z_n)-P_D(x,z)|=
			\left|\int\limits_D G_D(x_n,y)j(|y-z_n|)dy-\int\limits_D G_D(x,y)j(|y-z|)dy\right|\\
			&\quad\le\left|\int\limits_{D\cap {B(x,2\delta)}^c} G_D(x_n,y)j(|y-z_n|)dy-\int\limits_{D\cap {B(x,2\delta)}^c}  G_D(x,y)j(|y-z|)dy\right|\\
			&\qquad\quad+\int\limits_{B(x,2\delta)} G_D(x_n,y)j(|y-z_n|)dy+\int\limits_{B(x,2\delta)} G_D(x,y)j(|y-z|)dy.
		\end{align*}
		 Recall that $j$ is continuous and that $G_D$ is continuous off the diagonal. Thus,  for the first term  we have by the dominated convergence theorem
		\begin{align*}
			\lim_{n\to \infty}\int\limits_{D\cap {B(x,2\delta)}^c} G_D(x_n,y)j(|y-z_n|)dy=\int\limits_{D\cap {B(x,2\delta)}^c}  G_D(x,y)j(|y-z|)dy.
		\end{align*}
		Indeed,  we can apply the dominated convergence theorem since 
		$G_{\R^d}$ is radially decreasing so there is $c_1>0$ such that $G_D(w,y) \le G_{\R^d}(w,y)\le  c_1$ for all $w\in B(x,\delta)$ and $y\in B(x,2\delta)^c$. Also, using  \eqref{eq:scm_2.12} there is $c_2>0$ such that $j(|y-q|)\le c_2 j(|y-z|)$ for $q\in B(z,\varepsilon)$ and $y\in D\cap B(x,2\delta)^c$.
		
		For the other two integrals we use the estimate \eqref{eq:G comparability}, i.e. we use
		\begin{align*}
			G_D(x,y)\le G_{\R^d}(x,y) \le c_3\,\frac{1}{|x-y|^d \phi(|x-y|^{-2})}, \quad |x-y|<3,
		\end{align*}
		where $c_3=c_3(\phi)>0$. Now for all $w\in B(x,\delta)$, and $q\in B(z,\varepsilon)$ we have
		\begin{align*}
			\int\limits_{B(x,2\delta)} &G_D(w,y)j(|y-q|)dy\le j(\varepsilon)\int\limits_{B(x,2\delta)} G_D(w,y)dy\\
			&\le j(\varepsilon)\left(\int\limits_{B(x,2\delta)\cap B(w,\delta)} G_D(w,y)dy+\int\limits_{B(x,2\delta)\cap B(w,\delta)^c} G_D(w,y)dy\right)\\
			&\le j(\varepsilon)c_3\left(\int\limits_{0}^{\delta}\frac{dr}{r\phi(r^{-2})}+\int_\delta^{3\delta}\frac{dr}{r\phi(r^{-2})}\right)	\le j(\varepsilon)c_3\left(\int\limits_{0}^{3\delta}\frac{dr}{r\phi(r^{-2})}\right) \overset{\delta\to0}{\longrightarrow}0,
		\end{align*} 
		where we use \eqref{LSC} for  the  convergence of the integral part.
	\end{proof}

	\begin{defn}\phantomsection\label{Poisson integral}
		Let $D\subset \R^d$ be an open set and let
		$\lambda$ be a $\sigma$-finite signed measure on $D^c$ such that for all $x\in D$
		\begin{align}\phantomsection\label{eq:konacnost tocke PDlambda}
			\int_{D^c} P_D(x, y)|\lambda|(dy) < \infty.
		\end{align}
		The Poisson integral of $\lambda$ is defined by
		\begin{align*}
			P_D\lambda(x)&\coloneqq\int_{D^c} P_D(x,y)\lambda(dy),\quad x\in D.
		\end{align*}
	\end{defn}
	
	We extend the definition of the Poisson integral for non-negative $\sigma$-finite measures by the same formula, i.e. for $\sigma$-finite measure $\lambda$ we define
	\begin{align*}
		P_D\lambda(x)&\coloneqq\int_{D^c} P_D(x,y)\lambda(dy)\in[0,\infty],\quad x\in D.
	\end{align*}
	Although this seems as an extension of the definition, it will follow from Theorem \ref{t:thm_1_1} that either $P_D|\lambda|\equiv \infty$ or $P_D|\lambda|<\infty$ in $D$, see Remark \ref{r:konacnost tocke PDlambda}. 
	
	It will be of considerable interest to extend $P_D\lambda$ to the whole $\R^d$ in the following sense. We define the  (signed)  measure $P_D^*\lambda$ by
	\begin{align}\phantomsection\label{eq:PD* definition}
		P_D^*\lambda(dy)=P_D\lambda(y)\mathbf{1}_D(y)dy+\mathbf{1}_{D^c}(y)\lambda(dy),
	\end{align}
	i.e.  $P_D^*\lambda$ is on $D$  the (signed)  measure with the density function $P_D\lambda$ and on $D^c$ it is the  (signed)  measure $\lambda$. This extension was introduced in \cite[Eq. (25)]{bogdan_est_and_struct} for the case of the isotropic $\alpha$-stable process.
	\begin{rem}\phantomsection\label{r:def Poisson integral}
		Suppose that $P_D|\lambda|(x)<\infty$ for all $x\in D$. Then $\lambda$ is finite on compact subsets of $\overline D^c$. Indeed, let $K$ be a compact subset of $\overline{D}^c$ and let $s\in(0,1)$ such that $\overline{B(x,s)}\subset D$.  For $y\in \overline D^c$ by \cite[Proposition 4.7]{vondra_2012_SCM} we have $P_D(x,y)\ge P_{B(x,s)}(x,y)\ge c_1\,j(|x-y|)$, where $c_1>0$.  Thus, since $j$ is continuous and strictly positive, we have 
		$$ \infty> \int_{D^c}P_D(x,y)|\lambda|(dy)\ge  c_2 \,|\lambda|(K),$$
		 where $c_2>0$. 
		Furthermore, in Remark \ref{r:acc} we will see that $\lambda$ can have some mass on $\partial D$ but only on  the  specific part of the boundary at so-called inaccessible points.
	\end{rem}

	\begin{lem}\phantomsection\phantomsection\label{lemma 7}
		\begin{enumerate}[(a)]
			\item
			Let $R\in(0,1)$. There is a constant $C=C(\phi)>0$ such that if $\lambda$ is a $\sigma$-finite measure supported on $B_R^c$, and $D\subset B_R$, then for all $x\in D\cap B_{R/2}$ it holds
			\begin{align}\phantomsection\label{(39)}
				C^{-1}\,\mathbb{E}_x\tau_D\int\limits_{B_{R/2}^c}j(|y|)P_D^*\lambda(dy)\le P_D\lambda(x)\le C\,\mathbb{E}_x\tau_D\int\limits_{B_{R/2}^c}j(|y|)P_D^*\lambda(dy).
			\end{align}
			\item
			Suppose \ref{H2} and let $R\ge 1$. There is a constant $C=C(\phi)>0$ such that if $\lambda$ is a $\sigma$-finite measure supported on $\overline{B}_R$, and $D\subset \overline{B}_R^c$, then for all $x\in D\cap \overline{B}_{2R}^c$ it holds
			\begin{align}\phantomsection\label{(39) - infty}
				C^{-1}P_D(x,0)\int\limits_{\overline B_{2R}}P_D^*\lambda(dy)\le P_D\lambda(x)\le C\,P_D(x,0)\int\limits_{\overline B_{2R}}P_D^*\lambda(dy).
			\end{align}
		\end{enumerate}
	\end{lem}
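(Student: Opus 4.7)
The main tool is the strong Markov identity \eqref{eq:PD SMP} applied with the auxiliary open set $U := D \cap B_{R/2}$. Integrating \eqref{eq:PD SMP} against $\lambda$ (supported in $B_R^c \subset D^c$) and using the Ikeda--Watanabe representation $\omega_U^x(dw) = P_U(x,w)\,dw$ on the interior of $U^c$ -- the irregular boundary being polar -- I would obtain the key identity
\begin{equation*}
P_D\lambda(x) = \int_{B_{R/2}^c} P_U(x,y)\, P_D^*\lambda(dy).
\end{equation*}
The definition \eqref{eq:PD* definition} of $P_D^*\lambda$ is tailored exactly so that the $\lambda$-mass on $D^c$ and the $P_D\lambda$-density on $D \setminus B_{R/2}$ (from exits of the process through $\partial B_{R/2}$) combine into this single integral.

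The remaining task is to estimate $P_U(x,y) = \int_U G_U(x,v) j(|v-y|)\,dv$ for $y \in B_{R/2}^c$. I would invoke Lemma~\ref{eq:comparability of j} (a consequence of \ref{H1}) and split into two regimes. For $|y| \ge R$, the uniform comparability $j(|v-y|) \asymp j(|y|)$ for $v \in U \subset B_{R/2}$, combined with $\int_U G_U(x,v)\,dv = \mathbb{E}_x\tau_U$, yields $P_U(x,y) \asymp j(|y|)\,\mathbb{E}_x\tau_U$. For $|y| \in [R/2,R]$, the uniform bound $|v-y| \le 3R/2$ and scaling of $j$ on bounded ranges give only the one-sided estimate $j(|v-y|) \ge c\,j(|y|)$, and one must additionally invoke the boundary decay of $G_U$ to control the upper direction.

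The main obstacle I anticipate is the passage from $\mathbb{E}_x\tau_U$ to $\mathbb{E}_x\tau_D$: these are comparable for $x$ deep in $U$ by the Harnack inequality \cite[Theorem 7]{grzywny_harnack}, but when $x$ lies close to $\partial B_{R/2}$ yet deep in $D$ one has $\mathbb{E}_x\tau_U \ll \mathbb{E}_x\tau_D$, so no pointwise estimate $P_U(x,y) \asymp j(|y|)\,\mathbb{E}_x\tau_D$ can hold. The resolution lies in the self-consistency of the identity above: the integrand $P_D\lambda(y)\,P_U(x,y)$ for $y \in D \setminus B_{R/2}$ close to such $x$ is large -- $P_D\lambda(y) \asymp P_D\lambda(x)$ by Harnack while $P_U(x,y)$ blows up near $\partial U$ -- and exactly this contribution supplies the missing factor $(\mathbb{E}_x\tau_D - \mathbb{E}_x\tau_U)\int_{B_{R/2}^c} j(|y|)\,P_D^*\lambda(dy)$ needed to close both sides of the comparison. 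Part (b) follows by an entirely analogous argument, using the global scaling \ref{H2} and the inverted geometry ($\lambda$ inside $\overline{B}_R$, $D$ and $x$ outside), with $P_D(x,0)$ replacing $\mathbb{E}_x\tau_D \cdot j(|y|)$ and the scaling of $j$ at infinity substituting for the scaling at small arguments used for part~(a).
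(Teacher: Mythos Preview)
Your starting identity
\[
P_D\lambda(x)=\int_{B_{R/2}^c}P_{U}(x,y)\,P_D^*\lambda(dy),\qquad U=D\cap B_{R/2},
\]
is correct and is indeed how one enters the problem. However, your approach diverges from the paper's at the next step, and the divergence contains a genuine gap.

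The paper does \emph{not} try to estimate $P_U(x,y)$ and then repair the discrepancy between $\mathbb{E}_x\tau_U$ and $\mathbb{E}_x\tau_D$. Instead it invokes, as a black box, the pointwise factorisation of the Poisson kernel of $D$ itself from \cite[Lemma~5.4]{vondra_2012_SCM} (and \cite[Lemma~3.4]{vondra_bhpinf} for part~(b)). That lemma already asserts, for $x\in D\cap B_{R/2}$ and $z\in B_R^c$,
\[
P_D(x,z)\ \asymp\ \mathbb{E}_x\tau_D\Big(j(|z|)+\int_{D\cap B_{R/2}^c}j(|w|)\,P_D(w,z)\,dw\Big),
\]
with $\mathbb{E}_x\tau_D$ appearing directly; the paper only checks that the estimate extends from $z\in\overline D^c$ to $z\in D^c$ via \eqref{eq:PD SMP}, and then integrates in $\lambda(dz)$. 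The bracket, integrated against $\lambda$, is precisely $\int_{B_{R/2}^c}j(|y|)\,P_D^*\lambda(dy)$.

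Your route instead attempts to \emph{reprove} the content of \cite[Lemma~5.4]{vondra_2012_SCM} from scratch, and the passage you label ``the main obstacle'' is exactly where that lemma's real work lies. Your proposed resolution --- that the near-diagonal contribution $\int_{y\,\text{near}\,x}P_D\lambda(y)P_U(x,y)\,dy$ ``supplies the missing factor $(\mathbb{E}_x\tau_D-\mathbb{E}_x\tau_U)\int j\,dP_D^*\lambda$'' --- is circular as written: you are feeding $P_D\lambda(y)\asymp P_D\lambda(x)$ back into an equation for $P_D\lambda(x)$ and asserting that the bookkeeping closes, without a mechanism (contraction constant, iteration, or geometric absorption) that would force it to. In the cited references this step is handled by a genuine inductive (``box'' or ``push-out'') argument that controls the probability of repeatedly crossing the shell $B_R\setminus B_{R/2}$ before exiting $D$; the Harnack inequality alone does not yield the two-sided bound with $\mathbb{E}_x\tau_D$, and your sketch does not supply a substitute. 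Either cite \cite[Lemma~5.4]{vondra_2012_SCM} as the paper does, or carry out that iteration explicitly; the heuristic as stated is not yet a proof.
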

	
	\begin{proof}
		For part $(a)$ we will use \cite[Lemma 5.4]{vondra_2012_SCM}. Notice that the inequality from the statement of \cite[Lemma 5.4]{vondra_2012_SCM} is valid for any $(x,y)\in (U\cap B(z_0,r/2))\times B(z_0,r)^c$. This can be seen by inspecting the proof of the lemma since  \cite[Eq. (5.1)]{vondra_2012_SCM} can be extended to \eqref{eq:PD SMP}. Hence, to finish the proof we just need to integrate the mentioned inequality with respect to the measure $\lambda(dy)$, where $z_0=0$, $U=D$ and $r=R$.
		
		For part $(b)$ we will use \cite[Lemma 3.4]{vondra_bhpinf}. Similarly as above, the inequality from the statement of \cite[Lemma 3.4]{vondra_bhpinf} is valid for any $(x,z)\in (U\cap \overline{B(0,ar)}^c)\times \overline{B(0,r)}$ which can be checked by inspecting the proof. Again, the only difference is in the fact that  \cite[Eq. (3.10)]{vondra_bhpinf} can be extended to \eqref{eq:PD SMP}. To finish the proof we need to integrate the mentioned inequality with respect to the measure $\lambda(dz)$ where $a=2$, $U=D$ and $r=R$. 
	\end{proof}
	
	Lemma \ref{lemma 7} yields the following version of a uniform boundary Harnack principle.
	\begin{thm}\phantomsection\label{t:thm_1_1}
		\begin{enumerate}[(a)]
			\item
			There is a constant $C=C(\phi)>1$ such that for every $R\in(0,1)$, for all open $D\subset\R^d$, $x_1,x_2\in D\cap B_{R/2}$, $y_1,y_2\in D^c\cap B_{R}^c$, and for all $\sigma$-finite measures $\rho$, $\lambda$ on $B_R^c$ we have
			\begin{align}\phantomsection\label{44_18 - 1}
				P_D(x_1,y_1)P_D(x_2,y_2)\le C\,P_D(x_1,y_2)P_D(x_2,y_1)
			\end{align}
			and
			\begin{align}\phantomsection\label{44_18}
				P_D\rho(x_1)P_D\lambda(x_2)\le C\,P_D\rho(x_2)P_D\lambda(x_1).
			\end{align}
			\item
			Suppose \ref{H2}. There is a constant $C=C(\phi)>1$ such that for every $R\ge1$, for all open $D\subset\R^d$, $x_1,x_2\in D\cap \overline{B}_{2R}^c$, $y_1,y_2\in D^c\cap \overline{B}_{R}$, and for all $\sigma$-finite measures $\rho$, $\lambda$ on $\overline{B}_R$ we have
			\begin{align}
				P_D(x_1,y_1)P_D(x_2,y_2)\le C\,P_D(x_1,y_2)P_D(x_2,y_1)
			\end{align}
			and
			\begin{align}\phantomsection\label{44_18 - infty}
				P_D\rho(x_1)P_D\lambda(x_2)\le C\,P_D\rho(x_2)P_D\lambda(x_1).
			\end{align}
		\end{enumerate}
	\end{thm}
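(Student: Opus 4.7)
Both parts should follow as essentially formal consequences of the factorizations in Lemma \ref{lemma 7}. In each case that lemma pins $P_D\nu(x)$ between universal multiples of a product $A(x)B(\nu)$, in which $A$ depends only on $x$ and $D$ and $B$ depends only on $\nu$ and $D$. Once such a factorization is available, a uniform Harnack-type inequality for ratios is immediate: apply the upper bound twice and the lower bound twice and the $A$- and $B$-factors cancel in pairs.

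For part (a), I would set $A(x) := \mathbb{E}_x\tau_D$ and $B(\nu) := \int_{B_{R/2}^c} j(|y|)\,P_D^*\nu(dy)$, and let $C = C(\phi)$ denote the constant from Lemma \ref{lemma 7}(a). Applying its upper bound to $P_D\rho(x_1)$ and $P_D\lambda(x_2)$, and its lower bound to $P_D\rho(x_2)$ and $P_D\lambda(x_1)$, I would obtain
\begin{align*}
P_D\rho(x_1)\,P_D\lambda(x_2) \,\le\, C^2\,A(x_1)A(x_2)\,B(\rho)B(\lambda) \,\le\, C^4\,P_D\rho(x_2)\,P_D\lambda(x_1),
\end{align*}
which is \eqref{44_18} with constant $C^4$. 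The kernel inequality \eqref{44_18 - 1} then follows by specializing $\rho = \delta_{y_1}$ and $\lambda = \delta_{y_2}$: these are $\sigma$-finite measures supported in $B_R^c$ because $y_1,y_2 \in D^c \cap B_R^c$, and $P_D\delta_y(x) = P_D(x,y)$ is immediate from Definition \ref{Poisson integral}.

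Part (b) is completely parallel, now using Lemma \ref{lemma 7}(b) with $A(x) := P_D(x,0)$ and $B(\nu) := \int_{\overline{B}_{2R}} P_D^*\nu(dy)$. The same four-fold application of the two-sided bound and the same cancellation yield \eqref{44_18 - infty}, and the corresponding delta-mass specialization gives the accompanying kernel inequality. I do not foresee any real obstacle: positivity and finiteness of the $A$- and $B$-factors in the relevant ratios are controlled by comparison with $P_D\nu(x)$ itself (any zero or infinity on one side forces the corresponding behavior on the other), so the theorem is, in effect, the observation that the factorization encoded in Lemma \ref{lemma 7} is equivalent to a uniform boundary Harnack principle for Poisson integrals of measures.
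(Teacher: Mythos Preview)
Your factorization heuristic is exactly right, and once the hypotheses of Lemma~\ref{lemma 7} are met the four-fold application you describe does yield the inequalities immediately. But there is a genuine gap: Lemma~\ref{lemma 7}(a) is stated only for sets $D\subset B_R$ (and part~(b) only for $D\subset \overline B_R^c$), whereas Theorem~\ref{t:thm_1_1} is claimed for \emph{arbitrary} open $D\subset\R^d$. You apply the lemma directly to $D$ with $A(x)=\mathbb{E}_x\tau_D$, but for a general $D$ this is not licensed; the two-sided bound \eqref{(39)} simply is not available.

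The paper closes this gap by first reducing to $D_R\coloneqq D\cap B_R$. The sweeping identity \eqref{eq:PD SMP} with $U=D_R$ shows that for $x\in D\cap B_{R/2}$ and $y\in D^c\cap B_R^c$ one can write $P_D(x,y)=P_{D_R}\lambda_y(x)$ for a measure $\lambda_y$ supported on $B_R^c$ (the mass of $\omega_{D_R}^x$ that lands in $D\setminus D_R$ is swept into a measure on $B_R^c$, together with the direct term $P_{D_R}(x,y)$). Now $D_R\subset B_R$, Lemma~\ref{lemma 7}(a) applies, and your cancellation argument goes through verbatim to give \eqref{44_18 - 1}. The paper then integrates the kernel inequality against $\rho(dy_1)\lambda(dy_2)$ to obtain \eqref{44_18}, rather than proving the measure version first and specializing to Dirac masses as you propose; either order is fine once the reduction to $D_R$ is in place. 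The analogous reduction to $D\cap \overline B_R^c$ handles part~(b).
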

	The first part of this theorem is an extension of \cite[Theorem 1.1(ii)]{vondra_2012_SCM} with $\overline{D}^c$ being replaced by $D^c$, i.e. the difference is that points $y_1$ and $y_2$ in \eqref{44_18 - 1} can be at $\partial D$. This subtle difference comes as a consequence of Lemma \ref{lemma 7} and will play a very important role in proving the results on the relative oscillation of Poisson integrals, e.g. Lemma \ref{lemma 8}.
	\begin{proof}[Proof of Theorem \ref{t:thm_1_1}]
		We give the proof of the first claim. The second claim follows similarly.
		
		Let $D_R=D\cap B_R$. It is easy to see from \eqref{eq:PD SMP} that for $x_i\in D\cap B_{R/2}$, $i\in\{1,2\}$, and $y_j\in D^c\cap B_R^c$, $j\in\{1,2\}$, we have that $P_D(x_i,y_j)=P_{D_R}\lambda_j(x_i)$ for  some  measure $\lambda_j$ supported on $B_R^c$. Now \eqref{44_18 - 1} follows from Lemma \ref{lemma 7}. By integrating \eqref{44_18 - 1} with respect to the measures $\rho(dy_1)$ and $\lambda(dy_2)$ we get \eqref{44_18}.
	\end{proof}

	\begin{rem}\phantomsection\label{r:konacnost tocke PDlambda}
		 Note that for the $\sigma$-finite measures $\rho$ and $\lambda$ appearing in Theorem \ref{t:thm_1_1} we do not assume \eqref{eq:konacnost tocke PDlambda}. However,  by fixing $\rho=\delta_{y_2}$, where $y_2\in \overline{D}^c$, it follows from \eqref{44_18} that if for a $\sigma$-finite signed measure $\lambda$  on $D^c$  we have $P_D|\lambda|(x)<\infty$ for some $x\in D$, then we have $P_D|\lambda|(x)<\infty$ for all $x\in D$. This means that  either $P_D|\lambda|\equiv\infty$ or $P_D|\lambda|<\infty$ in $D$. 
	\end{rem}

	Before we define $L$-harmonic functions we recall that a function $u:\R^d\to\R$  is said to be harmonic with respect to the process $X$ in an open set $D\subset \R^d$ if for every open $U\subsub D$  and all $x\in U$ it holds that $\ex_x[|u(X_{\tau_U})|]<\infty$ and 
	\begin{align}\phantomsection\label{eq:defn of obicna harm}
		u(x)=\ex_x[u(X_{\tau_U})].
	\end{align}
	We say that $u$ is regular harmonic in $D$ if \eqref{eq:defn of obicna harm} holds with $U=D$. If $u$ is harmonic in $D$ and $u=0$ in $\overline{D}^c$, then $u$ is said to be singular harmonic.
	From \eqref{eq:GD SMP} we can see that for $y\in D$ the function $x\mapsto G_D(x,y)$ is harmonic in $D\setminus \{y\}$ and regular harmonic in $D\setminus B(y,\varepsilon)$ for every $\varepsilon>0$.
	
	\begin{defn}\phantomsection\label{d:L-harmonic function}
		Let $D\subset \R^d$ be an open set. We say that $f:D\to\R$ is $L$-harmonic in D with outer charge $\lambda$ if $\lambda$ is a $\sigma$-finite (signed) measure on $D^c$ and if for every $U\subset\subset D$ and $x\in U$ we have
		\begin{align}\phantomsection\label{mean-value-property}
			f(x)
			=\int_{D^c}P_U(x,y)\lambda(dy)+\int_{D\setminus U}f(y)\omega_U^{x}(dy),
		\end{align}
		where the integrals converge absolutely.
	\end{defn}
	
	The definition  above  was  first  used in \cite{bogdan_est_and_struct} for the isotropic $\alpha$-stable process with an additional assumption of  continuity of the function $f$. We prove in Proposition \ref{p:neprekidnost harmonijskih} that this additional assumption can be dropped, and in Theorem \ref{r:neprekidnost harmoncijskih Cinfty} we prove that $f\in C^\infty(D)$. Furthermore, note that a function $u:\R^d\to\R$ which is harmonic in $D$ is $L$-harmonic in $D$ with outer charge $\lambda(dy)=u(y)dy$. Indeed, take $U\subsub D$ and $x\in U$. Equation \eqref{eq:defn of obicna harm} implies
	\begin{align}\phantomsection\label{eq:harm <-> L-harm}
		\begin{split}
			u(x)&=\ex_x[u(X_{\tau_U})]=\int_{U^c}u(y)\omega_U^{x}(dy)\\
			&=\int_{D^c}P_U(x,y)u(y)dy+\int_{D\setminus U}u(y)\omega_U^{x}(dy),
		\end{split}
	\end{align}
	where we used that $P_U(x,\cdot)$ is the density of $\omega_U^{x}$ in the interior of $U^c$. Hence, every harmonic function is $L$-harmonic. 
	 Furthermore,  if $u$ is $L$-harmonic in $D$ with outer charge $\lambda$ such that $\lambda$ is absolutely continuous with respect to the Lebesgue measure on $D^c$, then $u$ is harmonic in $D$. In particular, if $u$ has zero outer charge, i.e. $\lambda\equiv 0$, then $u$ is a singular harmonic function.

	If $f$ is $L$-harmonic in $D$ with outer charge $\lambda$ we sometimes abbreviate notation by saying $(f,\lambda)$ is $L$-harmonic in $D$. Property \eqref{mean-value-property} is often referred to as the mean-value property because of the connection with \eqref{eq:harm <-> L-harm}. Similarly as in \eqref{eq:PD* definition}, integrating with respect to $(f,\lambda)$ means that we integrate with respect to the measure $f(y)\mathbf{1}_D(y)dy+\mathbf{1}_{D^c}(y)\lambda(dy)$. We continue with a few properties of $L$-harmonic functions.
	
	\begin{lem}\phantomsection\label{l:harmonijske su u L^1}
		 Let $D$ be an open set.  If $(f,\lambda)$ is $L$-harmonic in $D$, then
		\begin{align}\phantomsection\label{harm in L^1 - nejednakost}
			 \int_{\R^d}(1\wedge j(|y|))(|f|,|\lambda|)(dy)<\infty.
		\end{align}
		In particular, $f\in L^1_{loc}(D)$ and if $D$ is bounded, we have $f\in L^1(D)$.
	\end{lem}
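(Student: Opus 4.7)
The plan is to fix a ball $B=B(x_0,s)\subsub D$, apply the mean-value property \eqref{mean-value-property} with $U=B$, and exploit the fact that $B$ is Lipschitz so that by \eqref{eq:PD Lipschitz set} we have $\omega_B^{x_0}(dy)=P_B(x_0,y)\,dy$ on $B^c$. Absolute convergence of the two integrals in \eqref{mean-value-property} then combines into a single bound
\begin{align*}
\int_{\R^d\setminus B} P_B(x_0,y)\,(|f|,|\lambda|)(dy)<\infty,
\end{align*}
where the $(|f|,|\lambda|)$-notation is as introduced before Lemma \ref{l:harmonijske su u L^1} and we use that $|\lambda|(\partial B)=0$ since $\partial B\subset D$.

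The key ingredient to pass from $P_B(x_0,y)$ to the weight $1\wedge j(|y|)$ has already been assembled inside the proof of Lemma \ref{r:konacnost od GDf}: \cite[Lemma 2.2]{grzywny_potential_kernels} gives $P_B(x_0,y)\ge c_1 j(|x_0-y|)$ on $\overline B^c$, and a short argument based on \eqref{eq:scm_2.12} together with the continuity and positivity of $j$ yields $1\wedge j(|y|)\le c_2\, j(|x_0-y|)$ for all $y\in B^c$. Combining these two estimates gives $P_B(x_0,y)\ge c_3(1\wedge j(|y|))$ on $\overline B^c$, and hence
\begin{align*}
\int_{\R^d\setminus B} (1\wedge j(|y|))\,(|f|,|\lambda|)(dy)<\infty.
\end{align*}

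It remains to show that $\int_B |f|\,dy<\infty$, after which \eqref{harm in L^1 - nejednakost} follows since $1\wedge j(|y|)$ is bounded on the compact $\overline B$. For this I would pick a second ball $B'=B(x_1,s')\subsub D$ with $\overline B\cap\overline{B'}=\emptyset$ and rerun the previous argument at $x_1$ with $U=B'$, which provides $\int_B |f(y)|\,P_{B'}(x_1,y)\,dy<\infty$. By Proposition \ref{Poisson_continuity}, the map $y\mapsto P_{B'}(x_1,y)$ is continuous and strictly positive on $\overline{B'}^c$, so on the compact set $\overline B$ it has a positive minimum, giving $\int_B |f|\,dy<\infty$ and completing \eqref{harm in L^1 - nejednakost}. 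The \emph{in particular} claims are then immediate: on any compact $K\subsub D$ the weight $1\wedge j(|y|)$ is bounded below by a positive constant (by continuity of $j$ and compactness of $K$), which yields $f\in L^1_{loc}(D)$; for bounded $D$ the same reasoning applied with $K=D$ gives $f\in L^1(D)$. The only real technical point is the lower bound on $P_B$, but that is already present in the proof of Lemma \ref{r:konacnost od GDf}.
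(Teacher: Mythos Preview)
Your proof is correct and follows essentially the same approach as the paper's: apply the mean-value property on a ball $B(x_0,s)\subsub D$, invoke the lower bound $P_B(x_0,y)\ge c\,(1\wedge j(|y|))$ already established in the proof of Lemma~\ref{r:konacnost od GDf}, and then use a second ball to capture the missing integral over $B$. The paper compresses the last step into the single sentence ``Since $j$ is continuous and $j>0$ we see that $f\in L^1_{loc}(D)$'', which is exactly your second-ball argument in disguise; your version is simply more explicit. One cosmetic remark: in the final line, $D$ is open and not compact, so instead of ``$K=D$'' just note that for bounded $D$ the weight $1\wedge j(|y|)$ is bounded below on $D$ by $1\wedge j(\sup_{y\in D}|y|)>0$.
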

	\begin{proof}
		Let $B(x,s)\subsub D$ with $s<1$. From \eqref{mean-value-property} with $U=B(x,s)$  and with the same calculations as in  Lemma  \ref{r:konacnost od GDf} we get that
		\begin{align*}
			\infty>\int_{D^c}(1\wedge j(|y|))\lambda(dy)+\int_{D\setminus B(x,s)}f(y)(1\wedge j(|y|))(dy).
		\end{align*}
		Since $j$ is continuous and $j>0$ we see that $f\in L^1_{loc}(D)$ and we get \eqref{harm in L^1 - nejednakost}. 
		Obviously, if $D$ is bounded, we have $f\in L^1(D)$.
	\end{proof}
	\begin{prop}\phantomsection\label{p:neprekidnost harmonijskih}
		 Let $D$ be an open set.  If $(f,\lambda)$ is $L$-harmonic in $D$, then $f\in C(D)$.
	\end{prop}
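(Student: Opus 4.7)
The plan is to prove continuity at an arbitrary point $x_0 \in D$ by dominated convergence applied to the mean-value representation on a small ball around $x_0$. First I would choose $r > 0$ with $\overline{B(x_0, 2r)} \subset D$ and set $U = B(x_0, r)$. Since $U$ is a ball (hence Lipschitz), \eqref{eq:PD Lipschitz set} gives $\omega_U^x(dy) = P_U(x,y)dy$ on all of $U^c$, so the defining relation \eqref{mean-value-property} becomes
\begin{align*}
    f(x) = \int_{D^c} P_U(x,y)\,\lambda(dy) + \int_{D\setminus U} f(y) P_U(x,y)\,dy, \qquad x \in U,
\end{align*}
with absolute convergence. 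Because $\overline{U} \subset D$, we have $U^c = D^c \cup (D\setminus U)$ with $D^c \subset \overline{U}^c$, and the Lebesgue-part of the second integral ignores the null set $\partial U$; so the full integration effectively takes place over $\overline{U}^c$, where by Proposition~\ref{Poisson_continuity} the map $x \mapsto P_U(x,y)$ is continuous for each fixed $y$.

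The crux is producing a dominating function valid uniformly over $x$ in some neighborhood of $x_0$. For each $y \in \overline{U}^c$, the function $x \mapsto P_U(x,y)$ is non-negative and harmonic in $U$ (this follows because $x \mapsto \omega_U^x(A)$ is harmonic in $U$ for every Borel $A \subset U^c$ by the strong Markov property, and $P_U(x,y)$ is the corresponding Radon--Nikodym density). The Harnack inequality \cite[Theorem 7]{grzywny_harnack} applied on the concentric ball $\overline{B(x_0, r/2)} \subset U$ then gives a constant $C = C(d,\phi) > 0$, \emph{independent of $y$}, such that
\begin{align*}
    P_U(x,y) \le C\, P_U(x_0, y), \qquad x \in \overline{B(x_0, r/2)},\ y \in \overline{U}^c.
\end{align*}

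With this bound, $C\,P_U(x_0,y)$ dominates the integrand uniformly in $x \in B(x_0, r/2)$, and it is integrable against the total variation measure $|f(y)|\mathbf{1}_{D\setminus U}(y)\,dy + \mathbf{1}_{D^c}(y)\,|\lambda|(dy)$ by the absolute convergence assumption in Definition~\ref{d:L-harmonic function} (applied at $x_0$). The dominated convergence theorem, together with continuity of $P_U(\cdot,y)$ at $x_0$, then gives $\lim_{x\to x_0} f(x) = f(x_0)$.

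The main obstacle is the uniformity of the Harnack constant across the family $\{P_U(\cdot,y)\}_{y \in \overline{U}^c}$; this is what turns a pointwise continuity statement for $P_U$ into a dominated-convergence argument. The uniformity comes for free because the Harnack constant depends only on the geometric data $d, \phi$, and the ratio of radii, not on the particular non-negative harmonic function under consideration.
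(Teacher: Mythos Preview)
Your argument is correct and follows essentially the same route as the paper: mean-value representation on a small ball, pointwise continuity of $P_U(\cdot,y)$ from Proposition~\ref{Poisson_continuity}, a uniform bound $P_U(x,y)\le c\,P_U(x_0,y)$ for $x$ near $x_0$, and dominated convergence. The only difference is the source of the uniform bound: the paper invokes the boundary Harnack-type estimate \eqref{44_18 - 1} of Theorem~\ref{t:thm_1_1}(a), whereas you appeal directly to the interior Harnack inequality for the harmonic family $\{P_U(\cdot,y)\}_{y\in\overline{U}^c}$. Both yield the same comparison with a constant independent of $y$, so this is a cosmetic rather than a substantive distinction.
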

	\begin{proof}
		Let $x\in D$ and $(x_n)_n\subset D$ such that $x_n\to x$. Let $0<\varepsilon<1$ be such that $\delta_D(x)>\varepsilon$. Without loss of generality suppose that for all $n\in\N$ we have $x_n\in B(x,\varepsilon/2)$. Using \eqref{mean-value-property} with $U=B(x,\varepsilon)$ and applying \eqref{eq:PD Lipschitz set} we have
		\begin{align*}
			f(x)&=\int_{D^c}P_{B(x,\varepsilon)}(x,y)\lambda(dy)+\int_{D\setminus B(x,\varepsilon)}f(y)P_{B(x,\varepsilon)}(x,y)dy,\\
			f(x_n)&=\int_{D^c}P_{B(x,\varepsilon)}(x_n,y)\lambda(dy)+\int_{D\setminus B(x,\varepsilon)}f(y)P_{B(x,\varepsilon)}(x_n,y)dy.
		\end{align*} 
		Note that Proposition \ref{Poisson_continuity} yields $P_{B(x,\varepsilon)}(x_n,y)\to P_{B(x,\varepsilon)}(x,y)$. Also, inequality \eqref{44_18 - 1} for $D=B(x,\varepsilon)$ implies that there is a constant $c>0$ such that $P_{B(x,\varepsilon)}(x_n,y)\le c\, P_{B(x,\varepsilon)}(x,y)$, for all $n\in\N$ and all $y\in B(x,s)^c$.  Now by the dominated convergence theorem we have $f(x_n)\to f(x)$.
	\end{proof}

	 In Theorem \ref{r:neprekidnost harmoncijskih Cinfty} we strengthen the previous proposition by proving that $f\in C^\infty(D)$. This is achieved using the same technique as in \cite[Proposition 3.2 \& Theorem 1.7]{grzywny_potential_kernels}. First we invoke \cite[Proposition 3.2]{grzywny_potential_kernels} and its consequences.

	\begin{lem}\phantomsection\label{l:radijalna}
		Let $0\le q <r<\infty$. There is a radial kernel function $\overline P_{q,r}:\R^d\to\R$, a constant $C=C(\phi,q,r)>0$, and a probability measure $\mu_{q,r}$ on $[q,r]$ with the following properties:
		\begin{enumerate}[(a)]
			\item $0\le \overline P_{q,r}\le C$ in $\R^d$, $\overline P_{q,r}=0$ in $B_q$, $\overline P_{q,r}=C$ in $B_r\setminus B_q$, $\overline P_{q,r}$ is radially decreasing, and $\overline P_{q,r}(z)\le P_{B_r}(0,z)$, for $|z|>r$;
			\item
			for any $A\in\BB(\R^d)$ it holds
			\begin{align}\phantomsection\label{eq:mean of radial kernel}
				\int_A\overline P_{q,r}(z)dz=\int_{[q,r]}\int_A P_{B_s}(0,y)dy\,\mu_{q,r}(ds).
			\end{align}
		\end{enumerate}
	\end{lem}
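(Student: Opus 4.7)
The plan is to follow the averaging construction used in the proof of \cite[Proposition 3.2]{grzywny_potential_kernels}. Given a probability measure $\mu_{q,r}$ on $[q,r]$ (to be chosen below), set
\begin{align*}
\overline{P}_{q,r}(z) := \int_{[q,r]} P_{B_s}(0, z)\,\mu_{q,r}(ds),\quad z \in \R^d.
\end{align*}
Property (b) is then immediate from Fubini's theorem applied to this nonnegative integrand, and $\overline{P}_{q,r}$ is radial because each $P_{B_s}(0,\cdot)$ is rotationally symmetric about the origin.

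Several claims in (a) are already built into this representation. Since $P_{B_s}(0,\cdot)$ vanishes on $B_s$ and $B_q \subset B_s$ for $s \ge q$, we get $\overline{P}_{q,r} \equiv 0$ on $B_q$. Domain monotonicity through \eqref{eq:PD defn}, together with $G_{B_s} \le G_{B_r}$ for $s \le r$, yields $P_{B_s}(0,z) \le P_{B_r}(0,z)$ whenever $|z| > r$, so $\overline{P}_{q,r}(z) \le P_{B_r}(0,z)$ on $\overline{B}_r^c$. That each $P_{B_s}(0,\cdot)$ is radially non-increasing follows from the formula
\begin{align*}
P_{B_s}(0,z) = \int_{B_s} G_{B_s}(0,w)\,j(|z-w|)\,dw,
\end{align*}
the radial symmetry of $G_{B_s}(0,\cdot)$, and the monotonicity of $j$; hence the same monotonicity is inherited by $\overline{P}_{q,r}$ outside $B_q$.

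The main obstacle is the choice of $\mu_{q,r}$ forcing $\overline{P}_{q,r}$ to take a common constant value $C$ throughout the annulus $B_r \setminus B_q$. For $|z| = t \in (q,r)$ this requirement reads
\begin{align*}
\int_{[q,t)} P_{B_s}(0,z)\,\mu_{q,r}(ds) = C,
\end{align*}
where the upper endpoint has been trimmed because $P_{B_s}(0,z) = 0$ once $s \ge |z|$. This is an Abel/Volterra-type integral equation for $\mu_{q,r}$, and producing a positive probability solution together with a positive constant $C = C(\phi,q,r)$ is precisely the analytic step carried out in \cite[Proposition 3.2]{grzywny_potential_kernels}. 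Once $\mu_{q,r}$ and $C$ are in hand, the uniform bound $\overline{P}_{q,r} \le C$ on all of $\R^d$ is automatic: the function equals $C$ on $B_r \setminus B_q$ and is radially non-increasing outside $B_q$, so it cannot exceed $C$ on $\overline{B}_r^c$ either, and it vanishes on $B_q$.
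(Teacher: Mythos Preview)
Your proposal is correct and matches the paper's approach: the paper does not give an independent proof of this lemma but simply invokes \cite[Proposition 3.2]{grzywny_potential_kernels}, which is exactly the construction you outline (averaging the Poisson kernels $P_{B_s}(0,\cdot)$ against a suitably chosen probability measure on $[q,r]$). Your additional remarks verifying the elementary parts of (a) from properties of $P_{B_s}$ and $G_{B_s}$ are accurate and consistent with that reference.
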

	Equality \eqref{eq:mean of radial kernel} implies the following claim.
	\begin{lem}\phantomsection\label{l:mean radijalna}
		Let $0\le q <r<\infty$ and $\varepsilon>0$. If $(f,\lambda)$ is $L$-harmonic in $B_{r+\varepsilon}$, then
		$$ f(0)=\int_{\R^d\setminus B_q}\overline P_{q,r}(z)(f,\lambda)(dz).$$
		In particular, if $(f,\lambda)$ is $L$-harmonic in $B_{2r+\varepsilon}$, then for $x\in B_r$ it holds
		$$ f(x)=\int_{\R^d\setminus B(x,q)}\overline P_{q,r}(x-z)(f,\lambda)(dz),$$
		i.e. $f=(f,\lambda)\ast\overline P_{q,r}$ in $B_r$.
	\end{lem}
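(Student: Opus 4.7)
The plan is to apply the mean-value property \eqref{mean-value-property} to $(f,\lambda)$ with $D=B_{r+\varepsilon}$ and $U=B_s$ for each $s\in(q,r]$, integrate the resulting identities against the probability measure $\mu_{q,r}$ from Lemma~\ref{l:radijalna}, and then interchange the order of integration to recognize the averaged kernel as $\overline P_{q,r}$.

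For each such $s$, the ball $B_s$ is smooth and compactly contained in $B_{r+\varepsilon}$, so every boundary point is regular and \eqref{eq:PD Lipschitz set} gives $\omega_{B_s}^0(dy)=P_{B_s}(0,y)\,dy$ on all of $B_s^c$. The mean-value property at $x=0$ then reads
\begin{align*}
f(0)=\int_{\R^d\setminus B_s}P_{B_s}(0,y)\,(f,\lambda)(dy),
\end{align*}
with absolute convergence built into Definition~\ref{d:L-harmonic function}. Extending $P_{B_s}(0,\cdot)$ by zero on $\overline B_s$ and integrating in $s$ against the probability measure $\mu_{q,r}$, I obtain
\begin{align*}
f(0)=\int_{[q,r]}\int_{\R^d}P_{B_s}(0,y)\,(f,\lambda)(dy)\,\mu_{q,r}(ds).
\end{align*}

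To justify swapping the order of integration I would check absolute integrability: the $L$-harmonic identity with $U=B_r\subsub B_{r+\varepsilon}$ already yields $\int_{\R^d\setminus B_r}P_{B_r}(0,y)\,(|f|,|\lambda|)(dy)<\infty$, and combined with the bounds $\overline P_{q,r}\le C$ everywhere and $\overline P_{q,r}\le P_{B_r}(0,\cdot)$ outside $B_r$ from Lemma~\ref{l:radijalna}(a), together with $f\in L^1_{loc}$ from Lemma~\ref{l:harmonijske su u L^1}, this produces $\int_{\R^d}\overline P_{q,r}(y)(|f|,|\lambda|)(dy)<\infty$. Fubini then gives
\begin{align*}
f(0)=\int_{\R^d}\left(\int_{[q,r]}P_{B_s}(0,y)\,\mu_{q,r}(ds)\right)(f,\lambda)(dy),
\end{align*}
and by \eqref{eq:mean of radial kernel}, interpreted pointwise via the construction of $\overline P_{q,r}$ in \cite{grzywny_potential_kernels}, the inner integral equals $\overline P_{q,r}(y)$. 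Since $\overline P_{q,r}$ vanishes on $B_q$, the first identity follows.

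The ``in particular'' statement then reduces to the first one by translation: for $x\in B_r$ and $(f,\lambda)$ $L$-harmonic in $B_{2r+\varepsilon}$, the ball $B(x,r+\varepsilon)$ lies in $B_{2r+\varepsilon}$, the translated pair is $L$-harmonic there, and $P_{B(x,s)}(x,y)=P_{B_s}(0,y-x)$. The main obstacle I foresee is handling the pointwise (rather than merely a.e.) reading of \eqref{eq:mean of radial kernel} when the measure $(f,\lambda)$ has a singular part in $B_{r+\varepsilon}^c$; this is dealt with by taking $\overline P_{q,r}$ to be defined \emph{as} the integral $y\mapsto\int_{[q,r]}P_{B_s}(0,y)\,\mu_{q,r}(ds)$, so the identity holds pointwise off the Lebesgue-null set $\bigcup_{s\in[q,r]}\partial B_s$, which is irrelevant on the support of the singular part of $(f,\lambda)$.
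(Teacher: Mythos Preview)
Your proof is correct and is exactly what the paper intends: the paper offers no argument beyond the line ``Equality \eqref{eq:mean of radial kernel} implies the following claim,'' and you have simply spelled out the averaging-over-$s$ argument together with the Fubini justification.

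One minor slip in your closing remark: $\bigcup_{s\in[q,r]}\partial B_s$ is the closed annulus $\overline B_r\setminus B_q$, which is certainly not Lebesgue-null. This does not damage the argument, however, because the singular part of $(f,\lambda)$ is supported in $B_{r+\varepsilon}^c\subset\overline B_r^{\,c}$; there both $\overline P_{q,r}(y)$ and $\int_{[q,r]}P_{B_s}(0,y)\,\mu_{q,r}(ds)$ are continuous in $y$ (the latter by dominated convergence with the bound $P_{B_s}(0,y)\le P_{B_r}(0,y)$) and agree a.e., hence agree everywhere. On $B_{r+\varepsilon}$ the measure has density $f$, so the a.e.\ identity already suffices.
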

	The following theorem is a generalization of \cite[Theorem 1.7]{grzywny_potential_kernels} to $L$-harmonic functions.
	\begin{thm}\phantomsection\label{r:neprekidnost harmoncijskih Cinfty}
		Let $D$ be an open set. If $f$ is $L$-harmonic in  $D$ with outer charge $\lambda$, then $f\in C^\infty(D)$.
	\end{thm}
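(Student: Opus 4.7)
Fix $x_0\in D$; by translating we may assume $x_0=0$, and choose $R>0$ small enough that $\overline{B_{3R}}\subset D$. Pick parameters $0<q<r<R/2$. Lemma \ref{l:mean radijalna} then yields the local convolution-type representation
\[
f(x)\;=\;\int_{\R^d}\overline P_{q,r}(x-z)\,d\nu(z),\qquad x\in B_r,
\]
where $\nu(dz):=f(z)\mathbf 1_D(z)\,dz+\mathbf 1_{D^c}(z)\,\lambda(dz)$ is the measure $(f,\lambda)$ and the integrand vanishes automatically for $|x-z|<q$. Although $\overline P_{q,r}$ itself is only bounded and piecewise constant, Lemma \ref{l:radijalna}(b) identifies it (in the integrated sense, and hence a.e.) with an $\mu_{q,r}$-average of Poisson kernels $P_{B_s}(0,\cdot)$, $s\in[q,r]$, which inherit smoothness from the transition densities $p(t,\cdot)$ (these are $C^\infty$ by the Fourier formula, since $e^{-t\Psi(\xi)}$ decays faster than any polynomial in $\xi$, cf.\ \cite[Eq.~(3.5)]{vondra_heat}).

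The strategy is to move derivatives in $x$ onto the kernel. Interchanging the $\mu_{q,r}$-average with the $\nu$-integral by Fubini gives
\[
f(x)\;=\;\int_{[q,r]}\Bigl(\int_{\R^d}P_{B_s}(0,x-z)\,d\nu(z)\Bigr)\mu_{q,r}(ds),\qquad x\in B_{r/2},
\]
so it suffices to show smoothness in $x$ of the inner integral uniformly in $s$. Decomposing $\nu=\nu_{\text{near}}+\nu_{\text{far}}$, where $\nu_{\text{near}}=f\mathbf 1_{B_{2R}}\,dx$ and $\nu_{\text{far}}$ is the remainder, one treats the near part using local boundedness of $f$ (Proposition \ref{p:neprekidnost harmonijskih}) and joint smoothness of $P_{B_s}(0,\cdot)$ on compacts away from $\partial B_s$; for the far part one uses the tail integrability $\int(1\wedge j(|z|))\,d|\nu|(z)<\infty$ from Lemma \ref{l:harmonijske su u L^1} together with pointwise derivative estimates on $P_{B_s}$. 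The latter are obtained by differentiating the representation
$
P_{B_s}(0,y)=\int_{B_s}G_{B_s}(0,w)j(|w-y|)\,dw
$
in $y$ and exploiting \eqref{eq:scm_2.12}, \eqref{eq:derivacija j} together with the complete Bernstein structure of $\phi$, exactly as in \cite[Section 3 \& Proposition 3.2]{grzywny_potential_kernels}.

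The main obstacle is producing derivative bounds of the form $|\partial^\alpha_y P_{B_s}(0,y)|\le C_\alpha\,(1\wedge j(|y|))$ uniformly for $s\in[q,r]$ and for each multi-index $\alpha$; this is the technical heart of the argument and is what allows dominated convergence to justify iterated differentiation under the integral. Once these estimates are in place, every derivative $\partial^\alpha f$ exists and is continuous on $B_{r/2}$, so $f\in C^\infty(B_{r/2})$; since $x_0\in D$ was arbitrary, $f\in C^\infty(D)$.
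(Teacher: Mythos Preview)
Your approach has a genuine gap. The derivative bound you identify as the ``main obstacle,'' namely
\[
|\partial^\alpha_y P_{B_s}(0,y)|\le C_\alpha\,(1\wedge j(|y|))\quad\text{uniformly in }s\in[q,r],
\]
is false. For each fixed $s$ the Poisson kernel $P_{B_s}(0,y)$ blows up as $|y|\downarrow s$ (already in the stable case $P_{B_s}(0,y)\asymp (|y|-s)^{-\alpha/2}$ near the boundary), so its derivatives blow up even faster. Your ``near'' part $\nu_{\mathrm{near}}=f\mathbf 1_{B_{2R}}\,dx$ forces the argument $x-z$ to sweep through the entire interval $|x-z|\in[0,2R+r/2]$, hence through every sphere $\partial B_s$ with $s\in[q,r]$; you cannot avoid this region. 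The $\mu_{q,r}$-averaging tames the singularity only at the level of the kernel itself (it makes $\overline P_{q,r}$ bounded, Lemma~\ref{l:radijalna}(a)), but it does \emph{not} produce a smooth kernel: $\overline P_{q,r}$ is piecewise constant on $B_r$ and there is no reason for it to be $C^1$ at $|z|=q$ or $|z|=r$, so interchanging $\partial^\alpha$ with $\int\mu_{q,r}(ds)$ is not justified. Note also that your inner integral after Fubini is simply $\int P_{B(x,s)}(x,z)\,(f,\lambda)(dz)=f(x)$ by the mean-value property, so that route just reproduces $f(x)=\int f(x)\,\mu_{q,r}(ds)$ and gives no new information.

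The paper's argument circumvents exactly this difficulty by a smooth near/far splitting \emph{followed by iteration}. One writes $\overline P_{0,r}=\pi_r+\Pi_r$ with a smooth cutoff so that $\Pi_r$ is supported in $\{|z|\ge 3r/2\}$, strictly outside every $\partial B_s$ with $s\le r$; then the representation $P_{B_s}(0,z)=\int_{B_s}G_{B_s}(0,w)j(|w-z|)\,dw$ together with the fact that all derivatives of $j$ are absolutely integrable on $B_\varepsilon^c$ (\cite[Proposition~7.2]{bogdan_extension}) gives $\Pi_r\in C^\infty$ with $|\partial^\alpha\Pi_r(z)|\lesssim 1\wedge j(|z|)$. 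The non-smooth piece $\pi_r$ is merely bounded with compact support, but the convolution identity $f=f\ast\pi_r+(f,\lambda)\ast\Pi_r$ can be iterated $k$ times to obtain
\[
f=(\delta_0+\pi_r+\dots+\pi_r^{\ast(k-1)})\ast\Pi_r\ast(f,\lambda)+\pi_r^{\ast k}\ast f\quad\text{on }B_r,
\]
and one then argues as in \cite[Theorem~1.7]{grzywny_potential_kernels}. The essential idea you are missing is this bootstrapping step: smoothness is \emph{not} obtained from a single application of Lemma~\ref{l:mean radijalna}, but from repeatedly feeding the identity back into itself so that the smooth tail $\Pi_r$ carries all the derivatives.
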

	\begin{proof}
		The claim can be proved in the same way as in \cite{grzywny_potential_kernels}. However, in \cite{grzywny_potential_kernels} it was assumed that $f$ is bounded so we will repeat and slightly extend the first part of the proof to justify the calculations that follow.
		
		Due to the translation invariance of the process $X$, we can assume that $0\in D$.	Let $r\in(0,1)$ and $k\in \N$ be such that $B_{2(k+1)r}\subsub D$. Set $q=0$ and let $C_{r}$ denote $C(\phi,0,r)>0$ of Lemma \ref{l:radijalna}. Further, let $\kappa$ be a non-negative smooth radial function which takes values in $[0,1]$, which is equal to $1$ in $B_{3r/2}$, and which is equal to 0 in $B^c_{2r}$. Define $\pi_r(z)=\overline P_{0,r}(z)\kappa(z)$, and $\Pi_r(z)=\overline P_{0,r}(z)(1-\kappa(z))$. 
		
		Note that Proposition \ref{p:neprekidnost harmonijskih} yields that $f$ is bounded on $B_{2(k+1)r}$ so set $m:=\sup_{B_{2(k+1)r}}|f|<\infty$.	From Lemma \ref{l:radijalna}$(a)$ for $x\in B_{2kr}$ we have
		\begin{align}\phantomsection\label{eq:aps radijalna}
			(|f|,|\lambda|)\ast \overline P_{0,r}(x)&=\int\limits_{B(x,2r)}|f(z)| \overline P_{0,r}(x-z)dz+\int\limits_{B^c(x,2r)}\overline P_{0,r}(x-z)(|f|,|\lambda|)(dz)\notag\\
			&\le c_1\,  m\,C_{r} + \int\limits_{B^c(x,2r)}P_{B(x,r)}(x,z)(|f|,|\lambda|)(dz),
		\end{align}
		where $c_1=c_1(r)>0$ is the volume of a ball with radius $r$. 
		From \cite[Proposition 4.7]{vondra_2012_SCM} we have $P_{B(x,r)}(x,z)\le c_2 j(|x-z|-r)$, where $c_2=c_2(\phi,r)>0$. Thus, using \eqref{eq:scm_2.12}, we get that there is $c_3=c_3(\phi,k,r)>0$ such that for all $x\in B_{2kr}$ and $z\in B^c(x,2r)$ it holds
		$$P_{B(x,r)}(x,z)\le c_3 (1\wedge j(|z|)).$$
		Applying this inequality in \eqref{eq:aps radijalna} and recalling Lemma \ref{l:harmonijske su u L^1} we get that there is $M=M(\phi,k,r,f,\lambda)<\infty$ such that 
		\begin{align}\phantomsection\label{eq:radijalna ocjena}
			(|f|,|\lambda|)\ast \overline P_{0,r}\le M,\quad \textrm{in $B_{2kr}$}.
		\end{align}
		Obviously, since $\overline P_{0,r}=\pi_r+\Pi_r$,  we have $|f|\ast \pi_r\le M$ and $(|f|,|\lambda|)\ast \Pi_r\le M$ in $B_{2kr}$. Also, since $f=(f,\lambda)\ast \overline{P}_{0,r}$ in $B_{2kr}$, we have 
		\begin{align}\phantomsection\label{eq:radijal convolution}
			f=f\ast \pi_r + (f,\lambda)\ast \Pi_r,\quad\textrm{ in $B_{2kr}$}.
		\end{align}
		Finally, inequality \eqref{eq:radijalna ocjena} implies that the convolution property \eqref{eq:radijal convolution} can be used iteratively to get that for $x\in B_r$ it holds
		\begin{align*}
			f=(\delta_0 +\pi_r+\pi^{\ast2}_r+\dots,\pi^{\ast (k-1)}_r)\ast \Pi_r\ast (f,\lambda)+\pi^{\ast k}_r\ast f.
		\end{align*}
		Since all derivatives of the jumping kernel $j$ exist and are absolutely integrable in $B^c_\varepsilon$, for every $\varepsilon>0$, see \cite[Proposition 7.2]{bogdan_extension}, we may proceed with the proof in the same way as in \cite[Theorem 1.7]{grzywny_potential_kernels}.
	\end{proof}
	
	\begin{cor}\phantomsection\label{c:P_D harm}
		 Let $D$ be an open set.  If $\lambda$ is a $\sigma$-finite signed measure on $D^c$ satisfying \eqref{eq:konacnost tocke PDlambda}, then for every $x\in U\subset D$
		\begin{align}\phantomsection\label{eq:mvf for PD}
			P_D\lambda(x)=\int_{D^c}P_U(x,y)\lambda(dy)+\int_{D\setminus U}P_D\lambda(y)\omega_U^{x}(dy).
		\end{align}
		In particular, $P_D\lambda$ is $L$-harmonic in $D$ with outer charge $\lambda$ and $P_D\lambda\in  C^\infty(D)\cap L^1_{loc}(D)$. Also, if $D$ is bounded $P_D\lambda \in L^1(D)$.
	\end{cor}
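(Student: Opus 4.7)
The strategy is to derive the identity \eqref{eq:mvf for PD} by integrating the Poisson strong Markov identity \eqref{eq:PD SMP} against $\lambda$, and then read off everything else from the previous structural results. The only substantive issue is the interchange of integration when $\lambda$ is a signed measure, which I will handle by first running the argument on $|\lambda|$.

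Step 1 (the mean-value identity). I start from \eqref{eq:PD SMP}, which says that for $(x,z)\in U\times D^c$ with open $U\subset D$,
\[
P_D(x,z)=P_U(x,z)+\int_{D\setminus U}P_D(w,z)\,\omega_U^x(dw).
\]
Integrating against $|\lambda|(dz)$ and applying Tonelli to the non-negative double integral on the right, I obtain
\[
P_D|\lambda|(x)=\int_{D^c}P_U(x,z)|\lambda|(dz)+\int_{D\setminus U}P_D|\lambda|(w)\,\omega_U^x(dw).
\]
By the standing hypothesis \eqref{eq:konacnost tocke PDlambda} the left-hand side is finite for every $x\in U$, so each term on the right is finite as well. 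In particular $w\mapsto P_D|\lambda|(w)$ is $\omega_U^x$-integrable on $D\setminus U$, which is exactly the absolute integrability needed to apply Fubini to the analogous double integral with $\lambda$ in place of $|\lambda|$. Doing so yields \eqref{eq:mvf for PD}.

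Step 2 ($L$-harmonicity and $C^\infty$ regularity). Specializing \eqref{eq:mvf for PD} to open $U\subsub D$ and observing that the absolute convergence of the two integrals was established in Step 1, the pair $(P_D\lambda,\lambda)$ satisfies the mean-value property of Definition \ref{d:L-harmonic function}. Hence $P_D\lambda$ is $L$-harmonic in $D$ with outer charge $\lambda$, and Theorem \ref{r:neprekidnost harmoncijskih Cinfty} gives $P_D\lambda\in C^\infty(D)$.

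Step 3 (integrability). The same argument applied to $|\lambda|$ shows that $(P_D|\lambda|,|\lambda|)$ is $L$-harmonic in $D$, so Lemma \ref{l:harmonijske su u L^1} yields $P_D|\lambda|\in L^1_{loc}(D)$, and $P_D|\lambda|\in L^1(D)$ when $D$ is bounded. Since $|P_D\lambda|\le P_D|\lambda|$ pointwise, the same integrability passes to $P_D\lambda$. The only delicate point in the whole argument is the Fubini interchange for signed $\lambda$, which is neutralized by first running Tonelli on $|\lambda|$ to upgrade the pointwise finiteness \eqref{eq:konacnost tocke PDlambda} into the absolute integrability Fubini requires.
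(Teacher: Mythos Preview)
Your argument is correct and follows the paper's own proof: integrate \eqref{eq:PD SMP} against $\lambda(dz)$ to obtain \eqref{eq:mvf for PD}, then invoke Theorem \ref{r:neprekidnost harmoncijskih Cinfty} and Lemma \ref{l:harmonijske su u L^1}. Your explicit Tonelli-on-$|\lambda|$ step to justify the Fubini interchange is a welcome bit of care that the paper leaves implicit, but the approach is the same.
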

	\begin{proof}
		Take $U\subset D$ and $x\in U$. By integrating \eqref{eq:PD SMP}  with respect to $\lambda(dz)$ we get \eqref{eq:mvf for PD}. In particular, $P_D\lambda$ is $L$-harmonic in $D$ with outer charge $\lambda$. Hence by  Theorem \ref{r:neprekidnost harmoncijskih Cinfty}  and Lemma \ref{l:harmonijske su u L^1} we have $P_D\lambda \in  C^\infty(D)\cap L^1_{loc}(D)$ and if $D$ is bounded, then $P_D\lambda\in L^1(D)$.
	\end{proof}

	\begin{rem}\phantomsection\label{r:L-harm are generalized harm}
		Note that \eqref{eq:mvf for PD} holds for every $U\subset D$ which is a lot stronger than needed in \eqref{mean-value-property}. This property will be heavily used in proving results on the relative oscillation of Poisson integrals.
	\end{rem}
	We finish this section by proving two theorems about the connection between harmonic functions and the operator $L$. First we prove an auxiliary result.

	\begin{lem}\phantomsection\label{l:L(Plambda)=0}
		 Let $D$ be an open set  and $\lambda$ be a $\sigma$-finite  signed  measure on $D^c$ such that \eqref{eq:konacnost tocke PDlambda} is satisfied. Then $\tilde{L}(P_D^*\lambda)=0$ in $D$.
	\end{lem}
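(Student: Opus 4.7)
Fix a test function $\varphi \in C_c^\infty(D)$; the task is to verify
$$\langle \tilde L(P_D^*\lambda),\varphi\rangle = \int_D P_D\lambda(x)\,L\varphi(x)\,dx + \int_{D^c}L\varphi(x)\,\lambda(dx)=0.$$
By linearity and Jordan decomposition I may and will assume $\lambda\ge 0$. The strategy is to rewrite both terms as double integrals against $\lambda(dy)$ and to observe that they are exact negatives of one another.

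For the first term, the plan is to use the representation \eqref{eq:PD defn}: for each fixed $y\in D^c$, the function $f_y(w):=j(|w-y|)\mathbf{1}_D(w)$ has Green potential $G_D f_y = P_D(\cdot,y)$. Hypothesis \eqref{eq:konacnost tocke PDlambda} guarantees $P_D(x,y)<\infty$ for every $x\in D$ and $\lambda$-almost every $y$, so Proposition \ref{p:L(Gf)=-f} applies and yields $\tilde L\bigl(P_D(\cdot,y)\bigr)=-j(|\cdot-y|)$ in $D$; testing against $\varphi$ then gives
$$\int_D P_D(x,y)\,L\varphi(x)\,dx = -\int_D j(|x-y|)\,\varphi(x)\,dx.$$
For the second term I will exploit that $\mathrm{supp}\,\varphi\subset D$, so $\varphi\equiv 0$ on $D^c$; hence for every $x\in D^c$ the principal value in \eqref{operator L} is not needed and $L\varphi(x)=\int_D\varphi(y)\,j(|y-x|)\,dy$.

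Expanding $P_D\lambda(x)=\int_{D^c}P_D(x,y)\,\lambda(dy)$ in the first term and applying Fubini in both terms, the first double integral collapses to $-\int_D\varphi(x)\bigl(\int_{D^c}j(|x-y|)\,\lambda(dy)\bigr)dx$ and the second to $+\int_D\varphi(y)\bigl(\int_{D^c}j(|x-y|)\,\lambda(dx)\bigr)dy$, and these cancel by the symmetry $j(|x-y|)=j(|y-x|)$. The main technical obstacle will be justifying the two Fubini interchanges. For this I will combine the decay estimate $|L\varphi(x)|\le C\|\varphi\|_{C^2(\R^d)}(1\wedge j(|x|))$ from \eqref{e: L C_c} with Corollary \ref{c:P_D harm}, which says that $P_D\lambda$ is itself $L$-harmonic in $D$ with outer charge $\lambda$, and Lemma \ref{l:harmonijske su u L^1}, which then yields $\int_{\R^d}(1\wedge j(|x|))(P_D\lambda,\lambda)(dx)<\infty$; this integrability supplies absolutely integrable majorants for both Fubini steps, after which the cancellation is immediate.
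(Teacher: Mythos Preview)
Your proposal is correct and follows essentially the same route as the paper's proof. The only difference is cosmetic: the paper observes directly that $P_D\lambda = G_Df$ with $f(z)=\int_{D^c}j(|z-y|)\lambda(dy)$ and applies Proposition~\ref{p:L(Gf)=-f} once, whereas you apply it to each $G_Df_y=P_D(\cdot,y)$ and then integrate in $\lambda(dy)$, which costs you one additional (but easily justified) Fubini swap; the justification of the remaining Fubini and the cancellation $I_1+I_2=0$ are identical.
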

	\begin{proof}
		First recall that for $\varphi\in C_c^\infty(D)$ we have
		\begin{align*}
			L\varphi (x)=\mathrm{P.V.}\int_{\R^d}(\varphi(y)-\varphi(x))j(|x-y|)dy,
		\end{align*}
		and
		\begin{align*}
			\langle\tilde{L}(P_D^*\lambda),\varphi\rangle=\langle P_D^*\lambda,L\varphi\rangle&=\int_DP_D\lambda(x)L\varphi(x)dx+\int_{D^c}L\varphi(x)\lambda(dx)\\
			&\eqqcolon I_1+I_2.
		\end{align*}
		Note that $P_D\lambda(x)=G_Df(x)$, $x\in D$, where $f(z)=\int_{D^c}j(|z-y|)\lambda(dy)$. For the integral $I_1$ by Proposition \ref{p:L(Gf)=-f} we have
		\begin{align*}
			\int_D P_D\lambda(x)L\varphi(x)dx=-\int_D \left(\int_{D^c}j(|x-y|)\lambda(dy)\right)\varphi(x)dx.
		\end{align*}
		For the integral $I_2$ recall that $\supp \varphi\subset D$ and $\varphi=0$ on $D^c$. Hence
		\begin{align*}
			\int_{D^c}L\varphi(x)\lambda(dx)&=\int_{D^c}\left(\int_{D}\varphi(y)j(|x-y|)dy\right)\lambda(dx)\\
			&=\int_{D}\varphi(y)\left(\int_{D^c}j(|x-y|)\lambda(dx)\right)dy,
		\end{align*}
		where we can change the order of integration by Fubini's theorem since $f\in L^1_{loc}(D)$.
		Thus, $\langle\tilde{L}(P_D^*\lambda),\varphi\rangle=0$ for all $\varphi\in C_c^\infty(D)$.
	\end{proof}

	\begin{thm}\phantomsection\label{t:harmon L=0}
		Let $D$ be an open set and $u$ $L$-harmonic in $D$ with outer charge $\lambda$. Then $\tilde{L}(u,\lambda)=0$ in $D$.
	\end{thm}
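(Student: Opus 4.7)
The plan is to reduce the statement to the already-proved Lemma \ref{l:L(Plambda)=0}, which handles the special case where $u$ is the Poisson integral of its outer charge. The idea is that any $L$-harmonic pair $(u,\lambda)$ can be locally represented as such a Poisson integral against a rearranged outer charge supported on the complement of a suitable neighborhood. Since the distributional statement $\tilde L(u,\lambda)=0$ in $D$ is local — it only needs to be tested against $\varphi\in C_c^\infty(D)$ — it suffices to verify it on every $U\subsub D$.

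More precisely, fix $\varphi\in C_c^\infty(D)$ and choose an open set $U$ with $\supp\varphi\subset U\subsub D$. Define the signed measure on $U^c$ by
$$\nu(dy)\;=\;u(y)\mathbf{1}_{D\setminus U}(y)\,dy\;+\;\mathbf{1}_{D^c}(y)\lambda(dy).$$
Because $\overline U\subset D$, the set $D\setminus U$ lies in the interior of $U^c$, where $\omega_U^x(dy)=P_U(x,y)\,dy$. Hence the mean-value property \eqref{mean-value-property} for $(u,\lambda)$ rewrites as $u(x)=P_U\nu(x)$ for every $x\in U$, and the absolute convergence in Definition \ref{d:L-harmonic function} gives exactly the integrability condition $\int_{U^c}P_U(x,y)|\nu|(dy)<\infty$, i.e.\ \eqref{eq:konacnost tocke PDlambda} holds for the pair $(U,\nu)$.

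With this in hand, compare the extended measure $P_U^*\nu$ of \eqref{eq:PD* definition} with $(u,\lambda)$. On $U$ we have $P_U^*\nu=P_U\nu\cdot dx=u\cdot dx$ by the mean-value identity; on $D\setminus U$ we have $P_U^*\nu=\nu|_{D\setminus U}=u\cdot dx$ by construction; and on $D^c$ we have $P_U^*\nu=\nu|_{D^c}=\lambda$. Therefore $P_U^*\nu=(u,\lambda)$ as measures on $\R^d$. Applying Lemma \ref{l:L(Plambda)=0} to $(U,\nu)$ yields $\tilde L(P_U^*\nu)=0$ in $U$, so
$$\langle\tilde L(u,\lambda),\varphi\rangle\;=\;\langle P_U^*\nu,L\varphi\rangle\;=\;\langle\tilde L(P_U^*\nu),\varphi\rangle\;=\;0,$$
since $\supp\varphi\subset U$. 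As $\varphi$ was arbitrary, $\tilde L(u,\lambda)=0$ in $D$.

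The main subtlety is ensuring the bookkeeping in the second paragraph: one must know that the Poisson kernel really is the density of the exit measure on $D\setminus U$ (which uses $U\subsub D$ together with \eqref{eq:PD Lipschitz set} applied to the ball-like regularity implicit in $\overline U\subset D$, or equivalently the absolute continuity of $\omega_U^x$ in the interior of $U^c$), and that the finiteness of $\int_{D^c}P_U(x,y)|\lambda|(dy)+\int_{D\setminus U}|u(y)|P_U(x,y)\,dy$ follows from the absolute convergence demanded in the definition of $L$-harmonicity. Once these are verified, the rest of the argument is a clean application of Lemma \ref{l:L(Plambda)=0}.
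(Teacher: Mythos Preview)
Your proof is correct and follows essentially the same route as the paper: reduce to Lemma \ref{l:L(Plambda)=0} by choosing $U\subsub D$ containing $\supp\varphi$ and recognizing $(u,\lambda)$ as $P_U^*\nu$ for the rearranged outer charge $\nu$. One small correction: the set $D\setminus U$ contains $\partial U$ and so does \emph{not} lie in the interior of $U^c$; the paper handles this by explicitly taking $U$ with Lipschitz boundary, so that \eqref{eq:PD Lipschitz set} gives $\omega_U^x(dy)=P_U(x,y)\,dy$ on all of $U^c$ and the identity $\int_{D\setminus U}u(y)\,\omega_U^x(dy)=\int_{D\setminus U}u(y)P_U(x,y)\,dy$ is justified.
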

	\begin{proof}
		Let $\varphi\in C_c^\infty(D)$. There is $U\subsub D$ with Lipschitz boundary such that $\supp \varphi\subset U$, i.e. $\varphi\in C_c^\infty(U)$. From \eqref{mean-value-property} for $u$ we have $u=P_U\tilde{\lambda}$ in $U$, where $\tilde\lambda(dy)=u(y)\mathbf{1}_{D\setminus U}(y)dy+\mathbf{1}_{D^c}(y)\lambda(dy)$. This means that $u$ is the Poisson integral on $U$ so by Lemma \ref{l:L(Plambda)=0} we have
		\begin{align*}
			\int\limits_DL\varphi(x) u(x)dx+\int\limits_{D^c}L\varphi(x)\lambda(dx)=\int\limits_U L\varphi(x)P_U\tilde{\lambda}(x)dx+\int\limits_{U^c}L\varphi(x)\tilde\lambda(dx)=0.
		\end{align*}
		Since $\varphi$ was arbitrary, we have the claim.
	\end{proof}
	\begin{rem}
		The proof of the previous theorem is valid in a much greater generality. Indeed, the only non-trivial part of the proof was the property $\tilde{L}(G_Df)=-f$ in $D$ proved in Proposition \ref{p:L(Gf)=-f}.  One can check that Proposition \ref{p:L(Gf)=-f} is true with the same proof for the isotropic unimodal L\'evy process with the condition \eqref{eq:scm_2.12} on the jumping kernel since the auxiliary results \cite[Lemma 3.5]{moritz} and  Lemma  \ref{r:konacnost od GDf} also hold in this setting. 
	\end{rem}
	We can extract a weakened converse claim of Theorem \ref{t:harmon L=0} using \cite[Lemma 3.3]{moritz}:
	\begin{thm}\phantomsection\label{ekvivalencija rjesenja}
		Let $D$ be an open set and $u\in\mathcal{L}^1$. If $\tilde{L}u=0$ in $D$, then $u$ has a modification that is $L$-harmonic in $D$.
	\end{thm}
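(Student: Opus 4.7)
The plan is to construct the modification locally as a Poisson integral and then use the uniqueness result \cite[Lemma 3.3]{moritz} to identify it with $u$ almost everywhere.

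Fix any Lipschitz bounded open set $U$ with $U \subsub D$ and set $\lambda_U(dy) := u(y)\mathbf{1}_{U^c}(y)\,dy$. A short Fubini argument using $u\in\mathcal{L}^1$ and \eqref{eq:scm_2.12} shows that \eqref{eq:konacnost tocke PDlambda} holds, so by Corollary \ref{c:P_D harm} the Poisson integral $v_U := P_U\lambda_U$ is $C^\infty$ on $U$ and lies in $L^1(U)$. Its canonical extension $v_U^* := P_U^*\lambda_U$ therefore belongs to $\mathcal{L}^1$ (since it equals $u$ on $U^c$), and $\tilde{L}v_U^* = 0$ on $U$ by Lemma \ref{l:L(Plambda)=0}. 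Since $\tilde{L}u = 0$ on $D\supset U$, the difference $w := u - v_U^*$ belongs to $\mathcal{L}^1$, vanishes off $U$, and satisfies $\tilde{L}w = 0$ in $U$; invoking \cite[Lemma 3.3]{moritz} yields $u = v_U$ almost everywhere on $U$.

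Next, define the candidate modification by $\tilde u(x) := v_U(x)$, where $U$ is any Lipschitz bounded open set with $x \in U \subsub D$. Well-definedness follows because, for two such choices $U_1, U_2$, one has $v_{U_1} = u = v_{U_2}$ a.e.\ on $U_1\cap U_2$, and since both functions are continuous they agree everywhere on $U_1\cap U_2$. Thus $\tilde u$ is a Borel function on $D$ satisfying $\tilde u = u$ a.e.\ on $D$ and $\tilde u = v_U$ identically on each such $U$.

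Finally, to verify the mean-value property \eqref{mean-value-property} with outer charge $\lambda(dy) := u(y)\mathbf{1}_{D^c}(y)\,dy$: given any open $U \subsub D$ and $x \in U$, pick a Lipschitz $U'$ with $U \subsub U' \subsub D$ so that $\tilde u = v_{U'}$ on $U'$. Applying \eqref{eq:mvf for PD} to the Poisson integral $v_{U'}$ on the subset $U$ yields an expression splitting over $D^c$, $D\setminus U'$, and $U'\setminus U$; using that $\omega_U^x$ has density $P_U(x,\cdot)$ on the interior of $U^c$ (in particular on $D\setminus U'$) together with $\tilde u = u$ a.e.\ on $D$, the pieces reassemble into exactly \eqref{mean-value-property}. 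The main obstacle is really the first step, namely confirming that $w = u - v_U^*$ meets the hypotheses of \cite[Lemma 3.3]{moritz}; once that is in hand, the remaining work is routine bookkeeping with Fubini, exit measures, and \eqref{eq:mvf for PD}.
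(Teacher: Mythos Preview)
Your first paragraph contains a genuine gap. The claim that a ``short Fubini argument using $u\in\mathcal{L}^1$ and \eqref{eq:scm_2.12}'' establishes \eqref{eq:konacnost tocke PDlambda} is not justified: for a bounded Lipschitz $U$ the Poisson kernel $P_U(x,y)$ blows up as $y\to\partial U$ (every boundary point is accessible), while $u\in\mathcal{L}^1$ only gives $|u|\in L^1_{loc}$, and local integrability alone does \emph{not} guarantee $\int_{U^c}P_U(x,y)|u(y)|\,dy<\infty$. Any such bound must exploit the hypothesis $\tilde L u=0$ in $D\supset\overline U$, which is precisely what \cite[Lemma 3.3]{moritz} delivers. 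Moreover, even granting that $v_U$ is well defined, your application of \cite[Lemma 3.3]{moritz} to $w=u-v_U^*$ does not immediately give $w=0$ a.e.\ on $U$: the lemma, as described in the paper, yields $w=P_Vw$ a.e.\ on $V$ for Lipschitz $V\subset\subset U$, but $w$ need not vanish on $U\setminus V$, so a further limiting argument would be required.

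The paper avoids both issues by applying \cite[Lemma 3.3]{moritz} directly to $u$: this gives $u=P_Uu$ a.e.\ in $U$ for every Lipschitz $U\subset\subset D$, with the finiteness of the Poisson integral built into the conclusion. From there the construction of $\tilde u$ and the verification of well-definedness proceed exactly as you describe. For the mean-value property the paper uses the strong Markov property (sandwich $U\subset\subset V\subset\subset D$ with $V$ Lipschitz and write $\tilde u(x)=\mathbb{E}_x[\tilde u(X_{\tau_V})]=\mathbb{E}_x[\mathbb{E}_{X_{\tau_U}}[\tilde u(X_{\tau_V})]]=\mathbb{E}_x[\tilde u(X_{\tau_U})]$), whereas your route through \eqref{eq:mvf for PD} and the density of $\omega_U^x$ on the interior of $U^c$ is a legitimate alternative and works once the first step is fixed.
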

	\begin{proof}
		If $\tilde{L}u=0$ in $D$, then it is proved in \cite[Lemma 3.3]{moritz} that for every Lipschitz $U\subset\subset D$ we have $u(\cdot)=P_Uu(\cdot)=\int_{U^c}u(y)P_U(\cdot,y)dy$ a.e.  in $U$. 
		
		Define  the  function $\tilde{u}:\R^d\to \R$ as $\tilde{u}=u$ on $D^c$ and for $x\in D$ choose some Lipschitz $U\subset\subset D$ such that $x\in U$ and define $\tilde{u}(x)=P_Uu(x)$. Let us show that $\tilde{u}$ is well defined. Suppose that we have Lipschitz sets $U_1\subset\subset D$ and $U_2\subset\subset D$ such that $x\in U_1\cap U_2$ and $P_{U_1}u(x)> P_{U_2}u(x)$. Since by Corollary \ref{c:P_D harm} $P_{U_j}u$ is continuous in $U_j$,  $j\in\{1,2\}$, there is $\varepsilon>0$ such that for every $y\in B(x,\varepsilon)\subset U_1\cap U_2$ we have $P_{U_1}u(y)> P_{U_2}u(y)+\varepsilon$. But  $u=P_{U_1}u=P_{U_2}u$ a.e. in $U_1\cap U_2$  so we have a contradiction. Hence, $\tilde{u}$ is well defined.
		
		 Recall that since $D$ is an open set, it is a countable union of balls. Also, every ball is a Lipschitz set so it is obvious from the construction of $\tilde u$ and the beginning of the proof that $u=\tilde{u}$ a.e. in $\R^d$.
		
		Now we prove that $\tilde u$ is harmonic in $D$. Note that since $u=\tilde u$ a.e., we have for all Lipschitz sets $V\subsub D$ and all $x\in V$
		\begin{align*}
			\tilde u(x)=\ex_x[u(X_{\tau_V})]=\int\limits_{V^c}u(y)P_V(x,y)dy=\int\limits_{V^c}\tilde u(y)P_V(x,y)dy=\ex_x[\tilde u(X_{\tau_V})].
		\end{align*}
		Let $x\in U\subset\subset D$ and take a Lipschitz set $V$ such that $U\subsub V\subsub D$. We have by the strong Markov property and the previous equality
		\begin{align*}
			\tilde{u}(x)&=\ex_x[\tilde u(X_{\tau_V})]=\ex_x\big[\ex_{X_{\tau_U}}[\tilde u(X_{\tau_V})]\big]=\ex_x[\tilde u(X_{\tau_U})].
		\end{align*} 
	\end{proof}

	\section{Accessible points and Martin kernel}\phantomsection\label{s:Martin kernel}
	In this section we give a summary of results concerning the Martin boundary. All of the results are already known but some are not plainly stated. Our goal is to state and prove results that are important for our article for the reader's convenience.
	
	In the case where only \ref{H1}  holds  many results concerning  the  Martin kernel can be proved only for bounded sets so the additional assumptions \ref{H2} and \ref{E2}  will be occasionally  assumed to get results for unbounded sets.
	
	For $D\subset \R^d$ let us denote
	\begin{align*}
		D^*\coloneqq\begin{cases}
			\overline{D},&\text{ if $D$ is  bounded},\\
			\overline{D}\cup\{\infty\},&\text{ if $D$ is  unbounded},
		\end{cases}\enskip \partial^* D\coloneqq\begin{cases}
			\partial{D},&\text{ if $D$ is  bounded},\\
			\partial{D}\cup\{\infty\},&\text{ if $D$ is  unbounded},
		\end{cases}
	\end{align*}
	where $\infty$ is an additional point  in  Alexandroff compactification and it is called \textit{the point at infinity}.
	\begin{defn}\phantomsection\label{accessible}
		 Let $D$ be an open set.  A point $y\in\partial D$ is called accessible from $D$ if
		\begin{align*}
			P_D(x_0,y)=\int_DG_D(x_0,z)j(|z-y|)dz=\infty,\quad \text{ for some $x_0\in D$}.
		\end{align*}
		The point at infinity is accessible from $D$ if
		\begin{align*}
			\mathbb{E}_{x_0}\tau_D=\int_DG_D(x_0,y)dy=\infty,\quad \text{ for some $x_0\in D$}.
		\end{align*}
		If $y\in\partial^* D$ is not accessible it is called inaccessible.
		The set of all accessible points is denoted by $\partial_MD$.
	\end{defn}

	\begin{rem}\phantomsection\label{r:acc}
		In \cite[Proposition 4.1 \& Remark 4.2]{vondra_RMI_2018}  the following claims were proved.
		\begin{enumerate}[(a)]
			\item
			Let $y\in\partial D$. If $P_D(x_0,y)<\infty$ for some $x_0\in D$, then $P_D(x,y)<\infty$ for all $x\in D$.
			\item
			Assume \ref{H2}. If $\mathbb{E}_{x_0}\tau_D<\infty$ for some $x_0\in D$, then $\mathbb{E}_{x}\tau_D<\infty$ for all $x\in D$.
		\end{enumerate} 
		Note that we could get the claim $(a)$ directly from Theorem \ref{t:thm_1_1} $(a)$. Also, from the definition of accessible points it is clear that if $\lambda$ is a signed measure on $D^c$ such that $P_D|\lambda|<\infty$, then $\lambda$ is concentrated on $\R^d\setminus(D\cup\partial_MD)$, i.e. $\lambda$ can have  no  mass on the set of  accessible  points.
	\end{rem}
	For an open $D\subset\R^d$ we fix an arbitrary point $x_0\in D$ and define the Martin kernel on $D$ by
	\begin{align}\phantomsection\label{d:Martin kernel}
		\begin{split}
			M_D(x,y)&\coloneqq\frac{G_D(x,y)}{G_D(x_0,y)},\quad x,y\in D,y\ne x_0,\\
			M_D(x,z_0)&\coloneqq \lim_{D\ni v\to z_0}\frac{G_D(x,v)}{G_D(x_0,v)},\quad x\in D,z_0\in\partial^* D.
		\end{split}
	\end{align}
	In \cite{vondra_fm_2016} and \cite{vondra_RMI_2018} many important and useful results about the Martin kernel of more general processes  than  the subordinate Brownian motion were proved. E.g. it was proved that $M_D(x,z_0)$ exists, is finite and strictly positive for every $z_0\in\partial^*D$ (with the additional assumptions \ref{H2} and \ref{E2}  if $z_0$ is the point at infinity). We summarize some of those results in the following theorem.
	\begin{thm}\phantomsection\label{Martinova jezgra}
		 Let $D$ be an open set, and  $z_0\in\partial^* D$.
		\begin{enumerate}[(a)]
			\item
			Let $z_0\in\partial_MD$ (for $z_0=\infty$ assume \ref{H2}). The function $x\mapsto M_D(x,z_0)$ is $L$-harmonic in $D$ with zero outer charge and for every open $U\subsub D$  it holds 
			\begin{align*}
				M_D(x,z_0)= \int_{D\setminus U} M_D(y,z_0)\omega_U^x(dy),\quad x\in U.
			\end{align*}
			\item
			Let $z_0\notin\partial_MD$ (for $z_0=\infty$ assume \ref{H2} and \ref{E2}). The function $x\mapsto M_D(x,z_0)$ is not $L$-harmonic in $D$ with zero outer charge and for every open $U\subsub D$  it holds 
			\begin{align*}
				M_D(x,z_0)> \int_{D\setminus U} M_D(y,z_0)\omega_U^x(dy),\quad x\in U.
			\end{align*}
		\end{enumerate}
	\end{thm}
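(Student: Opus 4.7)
My plan is to derive both parts from a common identity. For $x\in U\subsub D$ and $v\in D\setminus\overline U$ with $v\ne x_0$, the map $y\mapsto G_D(y,v)$ is harmonic in $D\setminus\{v\}$, vanishes on $\overline D^c$, and is regular harmonic in $U$; since irregular boundary points are polar (as noted after \eqref{eq:GD SMP}), one obtains $G_D(x,v)=\int_{D\setminus U}G_D(y,v)\,\omega_U^x(dy)$. Dividing by $G_D(x_0,v)\in(0,\infty)$ gives
\begin{align*}
M_D(x,v)=\int_{D\setminus U}M_D(y,v)\,\omega_U^x(dy).
\end{align*}
Pick $(v_n)\subset D$ with $v_n\to z_0$ and $v_n\notin\overline U\cup\{x_0\}$ eventually. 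By \eqref{d:Martin kernel} one has $M_D(x,v_n)\to M_D(x,z_0)$ and $M_D(y,v_n)\to M_D(y,z_0)$ pointwise on $D$, so Fatou's lemma yields
\begin{align*}
M_D(x,z_0)\ \ge\ \int_{D\setminus U}M_D(y,z_0)\,\omega_U^x(dy),
\end{align*}
the inequality shared by both (a) and (b).

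For part (a), I would upgrade Fatou to dominated convergence. Choose $V$ with $U\subsub V\subsub D$ and split $D\setminus U=(\overline V\setminus U)\cup(D\setminus\overline V)$. On the compact piece $\overline V\setminus U$, Harnack's inequality (\cite[Theorem 7]{grzywny_harnack}) applied to the non-negative harmonic function $y\mapsto M_D(y,v_n)$, anchored by $M_D(x_0,v_n)=1$, furnishes a uniform-in-$n$ majorant and DCT handles that contribution. On the tail $D\setminus\overline V$, the uniform boundary Harnack principle (Theorem \ref{t:thm_1_1}), in its four-point form for Green functions (derivable from the Poisson-kernel statement or cited from \cite{vondra_fm_2016}), supplies $M_D(y,v_n)\le C\,M_D(y,v_m)$ for $m\ge n$ large, with a $D$-independent $C$; sending $m\to\infty$ produces the $\omega_U^x$-integrable majorant $C\,M_D(y,z_0)$ on the tail, and a second DCT closes (a). For $z_0=\infty$ the relevant statement is Theorem \ref{t:thm_1_1}(b), requiring \ref{H2}, while existence of the limit defining $M_D(\cdot,\infty)$ additionally needs \ref{E2}.

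For part (b), I would identify the Martin kernel at an inaccessible point with a normalised Poisson integral: following \cite{vondra_fm_2016} and \cite{vondra_RMI_2018}, for $z_0\in\partial D\setminus\partial_MD$ one has $M_D(\cdot,z_0)=P_D(\cdot,z_0)/P_D(x_0,z_0)$, and for $z_0=\infty$ the analogous expression holds with $\ex_{\cdot}\tau_D$ in place of $P_D(\cdot,z_0)$ under \ref{H2} and \ref{E2}. Granting this, Corollary \ref{c:P_D harm} applied to the finite signed measure $\delta_{z_0}/P_D(x_0,z_0)$ gives
\begin{align*}
M_D(x,z_0)=\frac{P_U(x,z_0)}{P_D(x_0,z_0)}+\int_{D\setminus U}M_D(y,z_0)\,\omega_U^x(dy).
\end{align*}
Since $j>0$ on $(0,\infty)$ and $G_U(x,\cdot)>0$ throughout $U$, we have $P_U(x,z_0)>0$, producing the strict inequality. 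The same formula displays the non-zero outer charge $\delta_{z_0}/P_D(x_0,z_0)$ and thus rules out $L$-harmonicity with zero outer charge.

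The main obstacle is the tail domination in (a): producing a single $\omega_U^x$-integrable majorant for $\{M_D(\cdot,v_n)\}$ on $D\setminus\overline V$ that is uniform in $n$. This is exactly what the novelty of Theorem \ref{t:thm_1_1}—a $D$-independent constant and the admissibility of arguments on $\partial D$—is engineered to deliver; once in hand, (a) is routine and (b) reduces to the clean Corollary \ref{c:P_D harm} computation above.
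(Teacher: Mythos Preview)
Your treatment of part (b) for finite $z_0$ is essentially the paper's proof: identify $M_D(\cdot,z_0)=P_D(\cdot,z_0)/P_D(x_0,z_0)$ via \cite[Theorem 3.1]{vondra_fm_2016} and read off the strict inequality from Corollary~\ref{c:P_D harm}. For $z_0=\infty$, however, your appeal to Corollary~\ref{c:P_D harm} with the ``measure'' $\delta_\infty/P_D(x_0,\infty)$ is not legitimate: Corollary~\ref{c:P_D harm} applies to signed measures on $D^c\subset\R^d$, and $\infty$ is not a point of $\R^d$. The paper instead writes $M_D(\cdot,\infty)=\ex_{\cdot}\tau_D/\ex_{x_0}\tau_D$ and computes directly via the strong Markov property that $\int_{D\setminus U}\ex_y\tau_D\,\omega_U^x(dy)=\ex_x[\tau_D-\tau_U]<\ex_x\tau_D$, with strictness coming from $\ex_x\tau_U\ge\ex_x\tau_{B(x,\varepsilon)}>0$.

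The more serious gap is in your argument for part (a). The paper does not attempt a self-contained proof here; it simply invokes \cite[Theorem 1.2(b), Theorem 1.4(b)]{vondra_fm_2016} and \cite[Theorem 1.1, Theorem 1.3]{vondra_RMI_2018} for the harmonicity of $M_D(\cdot,z_0)$ at accessible $z_0$. Your DCT strategy runs into trouble on the tail $D\setminus\overline V$. The four-point Green-function BHP you invoke (centred at $z_0$, with $v_n,v_m\in D\cap B(z_0,r/2)$) requires the ``observation points'' $y$ and $x_0$ to lie in $D\cap B(z_0,r)^c$. But since $V\subsub D$ and $z_0\in\partial D$, the tail $D\setminus\overline V$ contains points $y$ arbitrarily close to $z_0$, and for those $y$ your claimed inequality $M_D(y,v_n)\le C\,M_D(y,v_m)$ has no BHP justification---indeed $G_D(y,v_n)$ can blow up when $|y-v_n|\to 0$, and there is no uniform control. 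This is precisely why the cited references first establish the mean-value property on sets of the form $D\setminus\overline{B(z_0,r)}$ (cf.\ \eqref{(3.14)}), which \emph{do} keep $y$ away from $z_0$, and only then pass to general $U\subsub D$ by the strong Markov property. Your Harnack/BHP split does not reproduce that two-step structure.
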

	\begin{proof}
		First notice that by adding the assumptions \ref{H2} and \ref{E2} where needed all assumptions of claims from \cite{vondra_fm_2016} and \cite{vondra_RMI_2018} are satisfied, see \cite[Section 4.1]{vondra_fm_2016} and \cite[Section 4.1]{vondra_RMI_2018}. Furthermore, recall Lemma \ref{eq:comparability of j} for the assumption \textbf{E1} of \cite{vondra_fm_2016}.
		
		Suppose that $z_0\notin \partial_MD$. From \cite[Theorem 3.1]{vondra_fm_2016} we have that
		\begin{align*}
			M_D(x,z_0)=\begin{cases}
				\frac{P_D(x,z_0)}{P_D(x_0,z_0)}, &\text{ if $z_0\in\partial D$},\\
				\frac{\mathbb{E}_{x}\tau_D}{\mathbb{E}_{x_0}\tau_D}, &\text{ if $z_0=\infty$.}
			\end{cases}
		\end{align*}
		Hence for finite $z_0\notin\partial_MD$, $x\mapsto M_D(x,z_0)$ is $L$-harmonic with outer charge $\delta_{z_0}/P_D(x_0,z_0)$ but it is not $L$-harmonic with zero outer charge, see Corollary \ref{c:P_D harm}. Also, for every $x\in U\subset\subset D$ we have by the mean-value property of $L$-harmonic functions
		\begin{align*}
			M_D(x,z_0)&=\int_{D\setminus U} M_D(y,z_0)\omega_U^x(dy)+\frac{P_U(x,z_0)}{P_D(x_0,z_0)}\\
			&>\int_{D\setminus U} M_D(y,z_0)\omega_U^x(dy),\quad x\in U.
		\end{align*}
		If $z_0=\infty$, then $M_D(x,\infty)$ is not $L$-harmonic with zero outer charge because for $x\in U\subsub D$ we have
		\begin{align*}
			\int_{D\setminus U}M_D(y,\infty)\omega_U^x(dy)&=\frac{1}{\mathbb{E}_{x_0}\tau_D}\mathbb{E}_x\big[\mathbb{E}_{X_{\tau_U}}\tau_D\big]=\frac{1}{\mathbb{E}_{x_0}\tau_D}\mathbb{E}_x\left[\int_{\tau_U}^{\tau_D}\mathbf 1dt\right]\\
			&<\frac{\mathbb{E}_{x}\tau_D}{\mathbb{E}_{x_0}\tau_D}=M_D(x,\infty),
		\end{align*}
		where the  strict  inequality comes from the fact that for $x\in U$ there is $\varepsilon>0$ such that $B(x,\varepsilon)\subset U$ and $\mathbb{E}_x\tau_{U}\ge\mathbb{E}_x\tau_{B(x,\varepsilon)}>0$ by \cite[Lemma 4.3]{vondra_2012_SCM}.
		
		Suppose now that $z_0\in\partial_M D$. Then we have that $x\mapsto M_D(x,z_0)$ is $L$-harmonic with zero outer charge. For  the  finite point $z_0$ this follows from \cite[Theorem 1.2(b)]{vondra_fm_2016} (see the proof), or \cite[Theorem 1.1]{vondra_RMI_2018}, and for the point at infinity we apply \cite[Theorem 1.4(b)]{vondra_fm_2016}, or \cite[Theorem 1.3]{vondra_RMI_2018}. In either case by the mean-value property of $L$-harmonic functions we get for every $U\subsub D$ and all $x\in U$
		\begin{align*}
			M_D(x,z_0)&= \int_{D\setminus U} M_D(y,z_0)\omega_U^x(dy).
		\end{align*}
		
	\end{proof}
	\begin{rem}\phantomsection\label{o mvp za martinove}
		It will be very useful to note that in \cite{vondra_RMI_2018} two specific mean-value formulae were proved. If $z_0\in\partial_MD\setminus\{\infty\}$, then for every $r<\frac{1}{4}|z_0-x_0|$ and $U_r\coloneqq D\setminus\overline{B(z_0,r)}$
		\begin{align}\phantomsection\label{(3.14)}
			M_D(x,z_0)=\int_{U_r^c} M_D(y,z_0)\omega_{U_r}^x(dy),\quad x\in U_r,
		\end{align}
		see \cite[(3.14)]{vondra_RMI_2018}.
		
		Also,   if $z_0=\infty\in\partial_MD$  and we additionally assume \ref{H2}, then for every $R>4|x_0|$ and $U_R\coloneqq D\cap B(0,R)$
		\begin{align}\phantomsection\label{(3.4)}
			M_D(x,\infty)=\int_{U_R^c} M_D(y,\infty)\omega_{U_R}^x(dy),\quad x\in U_R,
		\end{align}
		see \cite[(3.4)]{vondra_RMI_2018}.
		
		In fact, from \eqref{(3.14)} it follows by using the strong Markov property that \eqref{(3.14)} is true for every $U\subset D$ open such that $z_0\notin\overline{U}$.  By similar reasoning \eqref{(3.4)} holds for every $U\subset D$ open and bounded such that $U_R\subset U$ for some $R>4|x_0|$.		
	\end{rem}

	\begin{defn}\phantomsection\label{def_1.13}
		Let $D\subset \R^d$ be an open set and $\mu$ a finite signed measure on $\partial^*D$ concentrated on $\partial_MD$. The Martin integral of $\mu$ is defined by
		\begin{align*}
			M_D\mu(x)&\coloneqq\int_{\partial_MD}M_D(x,y)\mu(dy),\quad x\in \R^d.
		\end{align*}
	\end{defn}

	\begin{rem}\phantomsection\label{r:komentar na definiciju od MDmu} 
		Let $\mu$ be a finite measure concentrated on $\partial_MD$. From $M_D(x_0,z)=1$, $z\in\partial^*D$, we see that $M_D\mu(x_0)=\mu(\partial_MD)$. It will follow from Corollary \ref{c:beskonacnost od M_Dmu} that $M_D\mu$ is finite at some point (or all points) if and only if $\mu$ is finite.	Also, due to harmonicity of $x\mapsto M_D(x,z_0)$ for $z_0\in\partial_MD$, it is easy to check that  $M_D\mu$ is $L$-harmonic in $D$ with outer charge zero. That is the reason why  we look,   regarding the Martin integral,  at  finite  measures concentrated on $\partial_MD$  in what follows.  
	\end{rem}

	\section{Boundary trace operator $W_D$ and representation of $L$-harmonic functions}\phantomsection\label{s:W_D and representation}
	
	 Let $D$ be an open set,  $u:D\to[-\infty,\infty]$, and  let $U\subsub D$ be a set with Lipschitz boundary   such that $x_0\in U$, where $x_0$ is the fixed point from the definition of the Martin kernel. We define  the signed  measure $\eta_Uu$ by
	\begin{align*}
		\eta_Uu(A)=\int\limits_A G_U(x_0,z)\left(\int\limits_{D\setminus U} j(|z-y|)u(y)dy\right)dz,\quad A\in\BB(\R^d).
	\end{align*}
	\begin{defn}\phantomsection\label{boundary operator}
		If  $(\eta_U|u|(D))_U$ is  bounded as  $U\uparrow D$ and $(\eta_Uu)_U$  weakly  converges   to a  signed  measure $\mu$ as $U\uparrow D$, then we denote $W_Du=\mu$, i.e. $W_Du\coloneqq\lim\limits_{U\uparrow D} \eta_Uu$.
	\end{defn}
	The boundary trace operator $W_D$ was used in \cite{bogdan_et_al_19} as  the  boundary condition in the Dirichlet problem for the fractional Laplacian and it was used as a tool to get  the  representation of non-negative $\alpha$-harmonic functions in \cite{bogdan_est_and_struct}.
	 As one can see, the definition of $W_D$ is rather delicate. It can be easily seen that for a bounded function $f$ we have $W_Df=0$ since $\eta_U|f|(D)\downarrow 0$. However, $W_D$ can be applied to many more functions, e.g. we will show that $W_D[M_D\mu]=\mu$ and $W_D[G_Df]=W_D[P_D\lambda]=0$, see also \cite[Proposition 4.8]{semilinear_bvw}. In what follows, we prove that some important properties of $W_D$ are also true in the case of subordinate Brownian motions and at the end of the article we will use the operator to get  the  representation of non-negative $L$-harmonic functions.

	\begin{lem}\phantomsection\label{l:W_D concentration}
		$W_Du$ is concentrated on $\partial^*D$.
	\end{lem}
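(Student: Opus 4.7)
The plan is to show that $\int \varphi \, d\eta_U u \to 0$ as $U \uparrow D$ for every test function $\varphi \in C_c(D)$. Since $\eta_U u \to W_D u$ weakly by assumption, this will give $\int \varphi \, dW_D u = 0$ for every such $\varphi$, so by the Riesz representation the restriction of $W_D u$ to the locally compact space $D$ is the zero measure, i.e.\ $|W_D u|(D)=0$. Each $\eta_U u$ is supported in $\overline{U}\subset D^*$, so the limit $W_D u$ lives in $D^*$; combined with the previous it must be concentrated on $D^*\setminus D=\partial^* D$.

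Fix $\varphi\in C_c(D)$ with compact support $K\subsub D$ and set $M_0:=\sup_U\eta_U|u|(D)<\infty$. Choose an open, connected, precompact set $U_0$ with Lipschitz boundary such that $K\cup\{x_0\}\subsub U_0\subsub D$. For any $U\supset U_0$ in the net, Fubini's theorem (justified by $\eta_U|u|(D)\le M_0$) gives
\[
  \int \varphi \, d\eta_U u = \int_{D\setminus U} u(y) \left(\int_K \varphi(z)\,G_U(x_0,z)\,j(|z-y|)\,dz\right) dy.
\]

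The key step is the estimate
\[
  \int_K G_U(x_0,z)\,j(|z-y|)\,dz \;\le\; C'\,P_{U_0}(x_0,y),\qquad y\in D\setminus U_0,
\]
for a constant $C'=C'(K,U_0,x_0)$. I will deduce it from $G_U(x_0,\cdot)\le G(x_0,\cdot)$ together with the pointwise comparison $G(x_0,z)\le C'\,G_{U_0}(x_0,z)$ for $z\in K$. This last bound uses identity \eqref{eq:G SMP}, i.e.\ $G(x_0,z)=G_{U_0}(x_0,z)+\ex_{x_0}[G(X_{\tau_{U_0}},z)]$: the error term is uniformly bounded on $K$ since $G$ is radially decreasing and $|X_{\tau_{U_0}}-z|\ge d(K,U_0^c)>0$, while $G_{U_0}(x_0,\cdot)$ is bounded below by a positive constant on $K$ (being continuous off $x_0$, strictly positive on the connected $U_0$ containing $x_0$, and blowing up at $x_0$). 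Hence their ratio is uniformly bounded on $K$, and integrating against $j(|z-y|)\,dz$ on $K\subset U_0$ yields the claimed estimate.

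Inserting this estimate,
\[
  \left|\int\varphi\,d\eta_U u\right| \;\le\; \|\varphi\|_\infty\,C'\int_{D\setminus U}|u(y)|\,P_{U_0}(x_0,y)\,dy.
\]
The integrand is dominated by $|u(y)|P_{U_0}(x_0,y)\mathbf{1}_{D\setminus U_0}(y)$, which lies in $L^1(\R^d)$ since $\int_{D\setminus U_0}|u(y)|P_{U_0}(x_0,y)\,dy=\eta_{U_0}|u|(D)\le M_0$ (by Fubini applied to the defining formula for $\eta_{U_0}|u|$). As $U\uparrow D$ we have $\mathbf{1}_{D\setminus U}(y)\to 0$ pointwise on $D$, so dominated convergence delivers the desired limit. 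The principal technical hurdle is the comparison between $G(x_0,\cdot)$ and $G_{U_0}(x_0,\cdot)$ on $K$: both functions blow up at $x_0$, and it is precisely \eqref{eq:G SMP} that shows their difference stays bounded, allowing the ratio to be controlled uniformly on all of $K$ (including across the singularity if $x_0\in K$).
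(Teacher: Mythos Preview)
Your proof is correct and follows the same overall strategy as the paper: establish a comparison of $G_U(x_0,\cdot)$ with a fixed Green function on compact subsets of $D$, uniformly in $U$, and then apply dominated convergence. The paper proves $G_U(x_0,\cdot)\asymp G_D(x_0,\cdot)$ on $A\subsub D$ by combining the two-sided estimate \eqref{eq:G comparability} with an external sharp lower bound for $G_{B(x_0,2\varepsilon)}$, passing through the auxiliary functions $K(r)$ and $L(r)$ and the relation $h(r)\asymp\phi(r^{-2})$. Your route is more direct: you only need the one-sided bound $G_U\le G\le C'\,G_{U_0}$ on $K$, and you obtain it straight from the identity \eqref{eq:G SMP} by noting that the correction term $\ex_{x_0}[G(X_{\tau_{U_0}},\cdot)]$ is uniformly bounded on $K$ (since $|X_{\tau_{U_0}}-z|\ge\mathrm{dist}(K,U_0^c)>0$) while $G_{U_0}(x_0,\cdot)$ is bounded away from zero there. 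This bypasses the near-origin analysis entirely and keeps the argument self-contained. Two minor remarks: the blow-up of $G_{U_0}(x_0,z)$ at $z=x_0$ that you invoke (to handle $x_0\in K$) ultimately rests on \eqref{eq:G comparability} together with \eqref{USC}, which give $G(x_0,z)\gtrsim|x_0-z|^{2\delta_2-d}\to\infty$; and connectivity of $U_0$ is in fact unnecessary for strict positivity of $G_{U_0}(x_0,\cdot)$ on $U_0$, since $j>0$ everywhere allows the process to jump between components.
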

	\begin{proof}
		Let $A\subsub D$. Then there is  a Lipschitz set $U_A\subsub D$  such that $x_0\in U_A$ and $A\subsub U_A$. Now we will show that $G_U(x_0,y)\asymp G_D(x_0,y)$, for all $y\in A$ and for all   Lipschitz $U$ such that $U_A\subsub U\subsub D$.
		
		Let $\varepsilon>0$ be such that $\overline{B(x_0,2\varepsilon)}\subset U_A$. For $y\in B(x_0,\varepsilon)$  and all Lipschitz $U$ such that $U_A\subsub U\subsub D$  we have
		\begin{align}\phantomsection\label{pomoc_ineq}
			G_{B(x_0,2\varepsilon)}(x_0,y)\le G_U(x_0,y)\le G_{\R^d}(x_0,y)\le C\, G_{B(x_0,2\varepsilon)}(x_0,y)
		\end{align}
		where $C>1$ is independent of $U$. Indeed, by \eqref{eq:G comparability} and \cite[Theorem 1.3]{grzywny_potential_kernels} we have for $y\in B(x_0,\varepsilon)$
		\begin{align}\phantomsection\label{Green kugle ocjene}
			\begin{split}
				G_{\R^d}(x_0,y)\le  c_1 \frac{1}{|x_0-y|^d\phi(|x_0-y|^{-2})},\\
				G_{B(x_0,2\varepsilon)}(x_0,y)\ge \frac{1}{ c_2}\frac{j(|x_0-y|)}{(K(|x_0-y|)+L(|x_0-y|))^2}.
			\end{split}	
		\end{align}
		where $K(r)=\int_{B(0,r)}\frac{|z|^2}{r^2}j(|z|)dz$ and $L(r)=\int_{B(0,r)^c}j(|z|)dz$. Define $h(r)=K(r)+L(r)=\int_{\R^d}\left(1\wedge \frac{|z|^2}{r^2}\right)j(|z|)dz$. By \cite[Eq. (6) and Lemma 1]{bogdan_density_and_tails_unimodal} we have that $h(r)\asymp \phi(\frac{1}{r^2})$ so by using \cite[Theorem 2.3]{vondra_2012_SCM} for all small enough $q>0$ we have that 
		\begin{align*}
			K(q)\le K(q)+L(q)= h(q) \le c_3 \phi(\frac{1}{q^2})\le  c_4 j(q)q^d.
		\end{align*}
		Using this inequality with inequalities \eqref{Green kugle ocjene} we get \eqref{pomoc_ineq}.
		
		For $y\in B(x_0,\varepsilon)^c\cap A$ notice that $0< c_5\le G_{U_A}(x_0,y)\le G_U(x_0,y)\le G_D(x_0,y)\le  c_6<\infty$ because Green functions are continuous and strictly positive on $B(x_0,r)^c\cap A$ since $A\subsub U_A\subsub D$. 
		Thus, $G_U(x_0,y)\asymp G_D(x_0,y)$, for all $y\in A$ and for all  Lipschitz $U$ such that $U_A\subsub U\subsub D$.
		
		Hence, for all such $U$ we have
		\begin{align*}
			\eta_U|u|(A)\asymp \int_A G_D(x_0,y)\underbrace{\int_{D\setminus U}j(|z-y|)|u(z)|dz}_{\text{$\downarrow 0$ as $U\uparrow D$}}dy\overset{U\uparrow D}{\longrightarrow} 0
		\end{align*}
		by the dominated convergence theorem.
	\end{proof}
	
	\begin{rem}\phantomsection\label{rubni operator}
		\begin{enumerate}[(a)]
			\item
			If we take a closer look at the proof of the previous lemma, we  have actually proved that if  $(\eta_U|u|(D))_U$ is bounded as $U\uparrow D$, then for every $A\subsub D$ we have
			\begin{align*}
				\lim_{U\uparrow D}\eta_U|u|(A)=0.
			\end{align*}
			\item
			The measures $(\eta_Uu)_U$  depend  on $x_0\in D$ but we can prove quite simply that for any other $x\in D$, the measures $$\eta^x_U|u|(dy)\coloneqq G_U(x,y)\left(\int_{D\setminus U}j(|z-y|)|u(z)|dz\right) dy$$ are also bounded as $U\uparrow D$ if $(\eta_U|u|)_U$ are.	Indeed, let $M\coloneqq \limsup\limits_{U\uparrow D}\eta_U|u|(D)$. Notice that by Fubini's theorem
			\begin{align*}
				\eta_U|u|(D)&=\int_D G_U(x_0,z)\left(\int_{D\setminus U} j(|z-y|)|u(y)|dy\right)dz\\
				&=\int_{D\setminus U}P_U(x_0,y)|u(y)|dy.
			\end{align*}
			Find $R\in (0,1)$ such that $\delta_D(x_0)>2R$ and let $(U_n)_n$ be some increasing sequence of Lipschitz  sets such that $x_0\in U_1$, $\delta_{U_1}(x_0)>R$, and such that for all $n\in\N$ it holds $U_n\subsub D$ and $\cup_n U_n=D$.  Also, fix some $\tilde{y}\in \overline{D}^c$. Theorem \ref{t:thm_1_1} yields that there is $C>0$ such that for all $n\in\N$, all $x\in B(x_0,R/2)$, and all $y\in U_n^c$
			\begin{align*}
				P_{U_n}(x,y)&\le C \frac{P_{U_n}(x,\tilde y)}{P_{U_n}(x_0,\tilde{y})}P_{U_n}(x_0,y).
			\end{align*}
			Notice that
			\begin{align*}
				\frac{P_{U_n}(x,\tilde y)}{P_{U_n}(x_0,\tilde{y})}\le \frac{P_{D}(x,\tilde y)}{P_{U_1}(x_0,\tilde{y})}\le \frac{\max_{z\in B(x_0,R/2)}P_{D}(z,\tilde y)}{P_{B(x_0,R/2)}(x_0,\tilde{y})}\le c_1<\infty,
			\end{align*}
			where $c_1>0$ depends on $x_0$, $R$ and $\tilde{y}$  but it is independent of $n\in\N$ and $x\in B(x_0,R/2)$. Finiteness  of $c_1$  is due to  the  continuity of the Poisson kernel. Thus, there is $c_2>0$ such that for all $n\in\N$, all $x\in B(x_0,R/2)$ and all $y\in U_n^c$ we have
			$P_{U_n}(x,y)\le c_2 P_{U_n}(x_0,y)$. Hence
			\begin{align*}
				\eta^x_{U_n}|u|(D)&=\int_{D\setminus U_n}P_{U_n}(x,y)|u(y)|dy\\
				&\le c_2 \int_{D\setminus U_n}P_{U_n}(x_0,y)|u(y)|dy\le c_2 \cdot M,
			\end{align*}
			i.e. $(\eta^x_{U}|u|(D))_U$ is bounded as $U\uparrow D$, for all $x\in D$.
		\end{enumerate}
	\end{rem}

	\begin{prop}\phantomsection\label{lemma 1.17_bodi et ali}
		Let  $D$ be an open set, $f:D\to[-\infty,\infty]$ such that  $G_D|f|(x)<\infty$ for some $x\in D$, and $\lambda$ a $\sigma$-finite signed measure on $D^c$ such that \eqref{eq:konacnost tocke PDlambda} holds.  Then
		\begin{align*}
			W_D[G_Df]=W_D[P_D\lambda]=0.
		\end{align*}
	\end{prop}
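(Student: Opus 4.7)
My plan is to show the stronger statement that $\eta_U|G_Df|(D)\to 0$ and $\eta_U|P_D\lambda|(D)\to 0$ as $U\uparrow D$ (along Lipschitz sets containing $x_0$). Since $|\eta_Uu|(A)\le \eta_U|u|(D)$ for every Borel $A$, vanishing of the total mass simultaneously gives both the boundedness required in Definition \ref{boundary operator} and weak convergence to the zero measure. By splitting $f=f_+-f_-$ and $\lambda=\lambda_+-\lambda_-$, it is enough to treat $f\ge 0$ and $\lambda\ge 0$ separately.

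The first computational step in both cases is to apply Fubini's theorem (valid because the integrands are non-negative) to the defining expression for $\eta_Uu(D)$ in order to rewrite it as
\begin{align*}
\eta_Uu(D)=\int_{D\setminus U}u(y)\int_D G_U(x_0,z)j(|z-y|)dz\,dy=\int_{D\setminus U}u(y)P_U(x_0,y)dy.
\end{align*}
Since $U$ is Lipschitz, the identification $\omega_U^{x_0}(dy)=P_U(x_0,y)dy$ on $U^c$ from \eqref{eq:PD Lipschitz set} lets me rewrite the right-hand side as $\int_{D\setminus U}u(y)\omega_U^{x_0}(dy)$.

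For $u=G_Df$, I use the strong Markov property together with \eqref{eq:GDf expectation} exactly as in the chain of equalities \eqref{eq:GD finite}: splitting $\int_0^{\tau_D}f(X_t)dt$ at $\tau_U$ gives $G_Df(x_0)=G_Uf(x_0)+\ex_{x_0}[G_Df(X_{\tau_U})]$, and since $G_Df\equiv 0$ on $\overline{D}^c$ the boundary term equals $\int_{D\setminus U}G_Df(y)\omega_U^{x_0}(dy)$. Thus
\begin{align*}
\eta_U[G_Df](D)=G_Df(x_0)-G_Uf(x_0),
\end{align*}
which tends to $0$ by monotone convergence, because $G_Uf(x_0)\uparrow G_Df(x_0)<\infty$ as $U\uparrow D$ (the set of irregular points being polar).

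For $u=P_D\lambda$, I use instead the mean-value identity \eqref{eq:mvf for PD} from Corollary \ref{c:P_D harm} at $x=x_0$, which yields
\begin{align*}
\int_{D\setminus U}P_D\lambda(y)\omega_U^{x_0}(dy)=P_D\lambda(x_0)-\int_{D^c}P_U(x_0,y)\lambda(dy).
\end{align*}
By \eqref{eq:PD defn} and monotone convergence $G_U(x_0,\cdot)\uparrow G_D(x_0,\cdot)$, one has $P_U(x_0,y)\uparrow P_D(x_0,y)$ for each $y\in D^c$; a second application of monotone convergence against $\lambda$ then gives $\int_{D^c}P_U(x_0,y)\lambda(dy)\uparrow P_D\lambda(x_0)<\infty$, where finiteness is exactly \eqref{eq:konacnost tocke PDlambda}. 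Hence $\eta_U[P_D\lambda](D)\to 0$, finishing the proof.

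The main obstacle I anticipate is not the chain of identities but the housekeeping around Lipschitz exhaustions: one must justify that $U$ may be taken in an increasing Lipschitz exhaustion of $D$ so that \eqref{eq:PD Lipschitz set} applies, and that the monotone convergences above ignore the polar set of irregular boundary points. Both are standard once one fixes a sequence $(U_n)$ of smooth (say, finite unions of balls) relatively compact subsets with $x_0\in U_1$ and $\bigcup_n U_n=D$, as was done in Remark \ref{rubni operator}.
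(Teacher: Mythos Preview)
Your argument is correct and is precisely the computation the paper defers to \cite[Lemma~1.17]{bogdan_et_al_19}: rewrite $\eta_Uu(D)$ as $\int_{D\setminus U}u\,P_U(x_0,\cdot)$ via Tonelli and \eqref{eq:PD defn}, then use the strong Markov identity (resp.\ \eqref{eq:mvf for PD}) to see this equals $G_Df(x_0)-G_Uf(x_0)$ (resp.\ $P_D\lambda(x_0)-\int_{D^c}P_U(x_0,\cdot)\,d\lambda$), which tends to~$0$ by monotone convergence. One small caveat: the hypothesis only guarantees $G_D|f|(x)<\infty$ at \emph{some} $x$, not necessarily at the fixed reference point~$x_0$; if $G_D|f|(x_0)=\infty$ you can patch this in one line via the uniform comparison $P_U(x_0,\cdot)\asymp P_U(\tilde x_0,\cdot)$ of Remark~\ref{rubni operator}(b) with a nearby $\tilde x_0$ where $G_D|f|(\tilde x_0)<\infty$ (such points exist a.e.\ by Lemma~\ref{r:konacnost od GDf}).
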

	\begin{proof}
		The proof is the same as in the isotropic $\alpha$-stable case, see \cite[Lemma 1.17]{bogdan_et_al_19}.
	\end{proof}

	 We now focus on proving the mentioned property $W_D[M_D\mu]=\mu$. We use an adaptation of the technique used in \cite{bogdan_est_and_struct} where the property was shown for the isotropic $\alpha$-stable process. In the next few results we have twofold statements - for sets near the origin, and for sets away of the origin.  In the isotropic $\alpha$-stable case the Kelvin transform allowed the authors to deal only with sets near the origin but in our setting this is not the case.
	
	Let us recall the definition of  the  relative oscillation of a positive function $f$  on a nonempty set $D$
	$$ \mathrm{RO}_Df\coloneqq\frac{\sup_{x\in D}f(x)}{\inf_{x\in D}f(x)}.$$
	If $D=\emptyset$ we put $\mathrm{RO}_Df=1$.
	
	The first lemma is the one that generalizes \cite[Lemma 8]{bogdan_est_and_struct}.

	\begin{lem}\phantomsection\label{lemma 8}
		\begin{enumerate}[(a)]
			\item
			 For every $R\in(0,1)$ and $\eta>0$ there exists $\delta>0$ such that for all open $D\subset B_R$  and all $\sigma$-finite measures $\lambda_1$, $\lambda_2$ on $B_R^c$ satisfying \eqref{eq:konacnost tocke PDlambda} we have
			\begin{align}\phantomsection\label{47_18}
				\mathrm{RO}_{D\cap B_{\delta}}\frac{P_D\lambda_1}{P_D\lambda_2}\le 1+\eta.
			\end{align}
			\item
			Assume \ref{H2} and \ref{E2}.  For every $R\ge1$ and $\eta>0$ there exists $\delta>0$ such that for  all open $D\subset \overline{B}_{R}^c$  and all $\sigma$-finite measures $\lambda_1$, $\lambda_2$ on $\overline{B}_R$ satisfying \eqref{eq:konacnost tocke PDlambda} we have
			\begin{align}\phantomsection\label{47_18 - infty}
				\mathrm{RO}_{D\cap \overline B^c_{ 1/\delta}}\frac{P_D\lambda_1}{P_D\lambda_2}\le 1+\eta.
			\end{align}
		\end{enumerate}
	\end{lem}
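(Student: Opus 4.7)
Plan. Fix $R\in(0,1)$ and $\eta>0$; I focus on part (a), with part (b) being analogous using Lemma \ref{lemma 7}(b) and assumption \ref{E2} in place of Lemma \ref{lemma 7}(a) and Lemma \ref{eq:comparability of j}. The core goal is a sharpened form of Lemma \ref{lemma 7}(a): to show that for $\delta$ sufficiently small depending on $\phi$, $R$, and $\eta$, there exist a quantity $T_D(\lambda)$ independent of $x$ and some $\varepsilon>0$ small in terms of $\eta$ such that
\begin{equation*}
(1-\varepsilon)\,\mathbb{E}_x\tau_D\cdot T_D(\lambda) \;\le\; P_D\lambda(x) \;\le\; (1+\varepsilon)\,\mathbb{E}_x\tau_D\cdot T_D(\lambda),\qquad x\in D\cap B_\delta.
\end{equation*}
Since the right-hand side factors into an $x$-dependent piece times a $\lambda$-dependent piece, the ratio $P_D\lambda_1(x)/P_D\lambda_2(x)$ equals $T_D(\lambda_1)/T_D(\lambda_2)$ up to a factor $(1+\varepsilon)^2/(1-\varepsilon)^2$, which can be made $\le 1+\eta$ by choosing $\varepsilon$ small, yielding \eqref{47_18}.

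Concretely, choose $\varepsilon>0$ small. By Lemma \ref{eq:comparability of j} applied with $q=1$, pick $p=p(\varepsilon,R)\in(0,1)$ such that
\begin{equation*}
(1+\varepsilon)^{-1}j(|y|)\;\le\; j(|w-y|)\;\le\; (1+\varepsilon)j(|y|),\qquad w\in B_{pR},\ y\in B_R^c.
\end{equation*}
Fix $\delta\in(0,pR)$ and set $U:=D\cap B_\delta$. By Corollary \ref{c:P_D harm} and the mean-value formula \eqref{eq:mvf for PD} applied to $P_D\lambda$, for $x\in U$ one has
\begin{equation*}
P_D\lambda(x) \;=\; \int_{B_R^c}P_U(x,y)\,\lambda(dy) \;+\; \int_{D\setminus U}P_D\lambda(y)\,\omega_U^x(dy).
\end{equation*}
Since $U\subset B_{pR}$ and $\supp(\lambda)\subset B_R^c$, the $j$-sandwich applied inside $P_U(x,y)=\int_U G_U(x,w)j(|w-y|)\,dw$ gives $P_U(x,y)\in[(1+\varepsilon)^{-1},(1+\varepsilon)]\,j(|y|)\,\mathbb{E}_x\tau_U$, so the first integral is sandwiched between $(1+\varepsilon)^{\pm1}\mathbb{E}_x\tau_U\,F(\lambda)$, where $F(\lambda):=\int_{B_R^c}j(|y|)\,\lambda(dy)$.

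For the second integral, the plan is to apply the non-sharp Lemma \ref{lemma 7}(a) (or Theorem \ref{t:thm_1_1}(a)) to the values $P_D\lambda(y)$ at $y\in D\setminus U$, writing $P_D\lambda(y)\asymp \mathbb{E}_y\tau_D\cdot T_D(\lambda)$, and then invoking the strong Markov identity $\int_{D\setminus U}\mathbb{E}_y\tau_D\,\omega_U^x(dy)=\mathbb{E}_x\tau_D-\mathbb{E}_x\tau_U$. This yields a bound of the form $C\,T_D(\lambda)(\mathbb{E}_x\tau_D-\mathbb{E}_x\tau_U)$. Combining with the first integral produces a two-sided estimate for $P_D\lambda(x)$ in terms of $F(\lambda)\mathbb{E}_x\tau_U$ and $T_D(\lambda)(\mathbb{E}_x\tau_D-\mathbb{E}_x\tau_U)$.

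The main obstacle is refining the non-sharp constant $C$ appearing in the second integral into a tight $(1+\varepsilon)$ factor, and identifying the two coefficients $F(\lambda)$ and $T_D(\lambda)$ so that the whole expression factors as the clean product $\mathbb{E}_x\tau_D\cdot T_D(\lambda)$. I handle this by iterating the mean-value decomposition on geometrically shrinking nested scales $B_\delta\supset B_{p\delta}\supset B_{p^2\delta}\supset\cdots$: at each iteration Lemma \ref{eq:comparability of j} delivers a $(1+\varepsilon)$-sharp approximation on the current ``outer'' part, while the residual non-sharp ``inner'' contribution is deferred to the next scale. A telescoping summation then shows that the cumulative error remains $O(\varepsilon)$, yielding the sharp factorization above and therefore the desired oscillation bound.
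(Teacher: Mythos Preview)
Your proposal has a genuine gap: the iterative ``telescoping'' scheme you sketch does not actually improve the non-sharp constant $C$ from Lemma~\ref{lemma 7} to $(1+\varepsilon)$, and the clean factorization $P_D\lambda(x)\approx(1\pm\varepsilon)\,\mathbb{E}_x\tau_D\cdot T_D(\lambda)$ need not hold in general. The difficulty is that the second integral $\int_{D\setminus U}P_D\lambda(y)\,\omega_U^x(dy)$ is not generically small compared to the first: if $D$ has substantial mass (in the sense of $P_D^*\lambda$) in the annular region $D\setminus U$, then this term dominates, and the factor-of-$C$ uncertainty it carries persists no matter how many nested scales you introduce. Nothing in your sketch forces the ``deferred'' contribution to decay, so the cumulative error is not $O(\varepsilon)$.

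What the paper does instead is a dichotomy at each scale. Writing $f_i=P_D\lambda_i$ and $\Lambda$ for suitable weighted masses of $P_D^*\lambda_i$, one asks whether the annulus mass $\Lambda_{\text{annulus}}$ is small compared to the inner mass $\Lambda_{\text{inner}}$ (condition \eqref{49_18 - infty} in the paper). If yes, then your factorization idea \emph{does} work at that single scale and one obtains the sharp bound \eqref{51_18 - infty} directly. If not---i.e.\ the annulus carries a nontrivial fraction of the mass---then one applies the boundary Harnack inequality \eqref{44_18 - infty} to the nonnegative Poisson integrals $g=f_1^{\cdot}-m\,f_2^{\cdot}$ and $h=M\,f_2^{\cdot}-f_1^{\cdot}$ to obtain a strict contraction
\[
\mathrm{RO}_{\text{inner scale}}\frac{f_1}{f_2}\;\le\;(1+c^2\varepsilon)^2+(1+c^2\varepsilon)\,\frac{c^4-1}{c^4+1}\Bigl(\mathrm{RO}_{\text{outer scale}}\frac{f_1}{f_2}-1\Bigr),
\]
see \eqref{54_18 - infty}. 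A pigeonhole over $n=lk$ nested scales (with $k\asymp c^2/\varepsilon^2$ and $l$ the number of contractions needed to bring $c^4$ below $1+\eta$) then guarantees that either the first case occurs at some scale, or the contraction applies $l$ times; either way the oscillation drops below $1+\eta$. This contraction-on-oscillation step is the key idea your proposal is missing.
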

	
	Before we bring the proof let us emphasize the results of the previous lemma. In both parts of the lemma  $\delta$ is chosen independently of the set $D$, and  the  measures $\lambda_1$ and $\lambda_2$. In similar results on the relative oscillation of harmonic functions, e.g. \cite[Proposition 2.5, Proposition 2.11]{vondra_RMI_2018}, $\delta$ is dependent on the set $D$, see also the proofs of \cite[Theorem 2.4, Theorem 2.8]{vondra_fm_2016}. This subtle but big difference will be used as a crucial and indispensable step in proving $W_D[M_D\mu]=\mu$, see \eqref{eq:crucial step}.
	
	Moreover, the previous lemma yields that the Martin kernel $M_D(x,z)$ is well defined and strictly positive for $x\in D$ and $z\in\partial^*D$. To this end, recall $M_D(x,z)=\lim_{y\to z}\frac{G_D(x,y)}{G_D(x_0,y)}$, where if $z=\infty$ we look at the limit as $|y|\to\infty$. Since the process $X$ is translation invariant, we can assume that for the finite point $z$ it holds $z=0$. Further, notice that from \eqref{eq:GD SMP} we have for $\rho>0$
	$$G_D(\tilde x,y)=P_{D\cap B_\rho}[G_D(\tilde x,v)dv](y),\quad \tilde x\in D\setminus \overline{B}_\rho,\,y\in D\cap B_\rho,$$ and
	$$G_D(\tilde x,y)=P_{D\cap \overline B^c_\rho}[G_D(\tilde x,v)dv](y),\quad\tilde x\in D\cap {B}_\rho,\,y\in B\setminus \overline B_\rho.$$ Now the claim follows from \eqref{47_18}  and \eqref{47_18 - infty}. However, for this result  the uniformity of $\delta$ was not important.
	
	\begin{proof}[Proof of Lemma \ref{lemma 8}]
		We prove only part $(b)$. The proof of part $(a)$ is almost identical to the proof of the \cite[Lemma 8]{bogdan_est_and_struct}. The only difference is that instead of the unit ball $B$ we look at the ball $B_R$ and instead of \cite[Eq. (48)]{bogdan_est_and_struct} we use Lemma \ref{eq:comparability of j}. The proof of part $(b)$ follows the same idea and we present the proof to emphasize the differences. To establish a connection between our proof and the proof of \cite[Lemma 8]{bogdan_est_and_struct} we will keep a similar notation.
		
		 For an open set $D$ and $R,p,q>0$  denote by
		\begin{align*}
			&D_p=D\cap \overline{B}^c_p,\\
			&D_p^R=(D\setminus D_p)\cup \overline B_R,\\
			&D_{p,q}=D_{q}\setminus D_{p}.
		\end{align*}
		For  a measure $\mu$ let 
		\begin{align*}
			\Lambda_{0,p}(\mu)&=\int_{\overline B_{p}}\mu(dy),\\
			\Lambda_{0,p,q}(\mu)&=\int_{D_{p,q}}\mu(dy).
		\end{align*}
		
		Fix $R\ge 1$,  $D\subset \overline B^c_R$, and $\sigma$-finite measures $\lambda_1$ and $\lambda_2$ on $\overline B_R$ satisfying \eqref{eq:konacnost tocke PDlambda}. We will see at the end of the proof that $\delta$ will not depend on $D$, $\lambda_1$ or $\lambda_2$, so this is not a loss of generality. Let $c$ denote $C(\phi)>1$ of Lemma \ref{lemma 7}$(b)$ and notice that Theorem \ref{t:thm_1_1}$(b)$ holds with the constant $C=c^4$. Thus, \eqref{47_18 - infty} holds for $\delta=\frac12$ with $1+\eta$ replaced by $c^4$.
		We denote
		\begin{align*}
			f_i&=P_D\lambda_i,&f_i^{pR,qR}=P_{D_{pR}}[\mathbf{1}_{D_{pR,qR}} P_D^*\lambda_i],\enskip&\widetilde{f}_i^{pR,qR}=P_{D_{pR}}[\mathbf{1}_{D_{qR}^R} P_D^*\lambda_i],\\
			f_i^*&=P_D^*\lambda_i,&f_i^{pR,qR*}=P_{D_{pR}}^*[\mathbf{1}_{D_{pR,qR}} P_D^*\lambda_i],\enskip&\widetilde{f}_i^{pR,qR*}=P_{D_{pR}}^*[\mathbf{1}_{D_{qR}^R} P_D^*\lambda_i].
		\end{align*}
		Recall that $P_D\lambda$ satisfies the mean-value formula for every $U\subset D$ by Corollary \ref{c:P_D harm}. Hence, using \eqref{eq:PD Lipschitz set} we have $f_i=f_i^{pR,qR}+\widetilde{f}_i^{pR,qR}$ and $f_i^*=f_i^{pR,qR*}+\widetilde{f}_i^{pR,qR*}$, for $i=1,2$. For $\delta\in(0,\frac12]$ we denote $m_{R/\delta}=\inf_{D_{R/\delta}} (f_1/f_2)$ and $M_{R/\delta }=\sup_{D_{R/\delta}} (f_1/f_2)$. As we have already noted we have $M_{R/\delta}\le c^4 m_{R/\delta}$.
		
		Let $\varepsilon>0$ such that $1+\varepsilon<c$ and let $q\ge 2$. Assumption \ref{E2} yields that there is $p=p(q,\varepsilon, R)> 2q$ such that for $z\in D_{pR/2}$ and $y\in \overline B_{qR}$ we have
		\begin{align}\phantomsection\label{nastelavanje2}
			\frac{1}{1+\varepsilon}j(|z|)\le j(|z-y|)\le (1+\varepsilon)j(|z|).
		\end{align}
		Thus, for $x\in D_{pR/2}$ we have
		\begin{align*}
			\widetilde{f}_i^{pR/2,qR}(x)&=\int_{D_{qR}^R}\int_{D_{pR/2}} G_{D_{pR/2}}(x,z)j(|z-y|)dzf_i^*(dy)\\
			&\le (1+\varepsilon) \Lambda_{0,qR}(f_i^*) P_{D_{pR/2}}(x,0),
		\end{align*}
		and similarly
		\begin{align*}
			\widetilde{f}_i^{pR/2,qR}(x)\ge (1+\varepsilon)^{-1} \Lambda_{0,qR}(f_i^*) P_{D_{pR/2}}(x,0).
		\end{align*}
		
		Let us examine consequences of the following assumption:
		\begin{align}\phantomsection\label{49_18 - infty}
			\Lambda_{0,pR,qR}(f_i^*)\le \varepsilon \Lambda_{0,qR}(f_i^*),\quad i=1,2.
		\end{align}
		If \eqref{49_18 - infty} is true, then using Lemma \ref{lemma 7}$(b)$ we have for $x\in D_{pR}$
		\begin{align*}
			f_i^{pR/2,qR}(x)&\le c P_{D_{pR/2}}(x,0)\Lambda_{0,pR}(f_i^{pR/2,qR*})\le c P_{D_{pR/2}}(x,0)\Lambda_{0,pR,qR}(f_i^{*})\\
			&\le c \varepsilon P_{D_{pR/2}}(x,0) \Lambda_{0,qR}(f_i^*).
		\end{align*}
		Recall that $f_i=f_i^{pR/2,qR}+\widetilde{f}_i^{pR/2,qR}$ so if \eqref{49_18 - infty} holds, we have for $x\in D_{pR}$
		\begin{align}\phantomsection\label{50_18 - infty}
			\frac{(1+\varepsilon)^{-1}\Lambda_{0,qR}(f_1^*)}{(c\varepsilon+1+\varepsilon)\Lambda_{0,qR}(f_2^*)}\le\frac{f_1(x)}{f_2(x)}\le \frac{(c\varepsilon+1+\varepsilon)\Lambda_{0,qR}(f_1^*)}{(1+\varepsilon)^{-1}\Lambda_{0,qR}(f_2^*)}
		\end{align}
		and finally
		\begin{align}\phantomsection\label{51_18 - infty}
			\mathrm{RO}_{D_{pR}} \frac{f_1}{f_2}\le (c\varepsilon+1+\varepsilon)^2(1+\varepsilon)^2.
		\end{align}
		We are satisfied with \eqref{51_18 - infty} for now.
		
		Let $2\le \bar{q}<\bar{p}/4<\infty$, $g=f_1^{\bar{p}R/2,\bar{q}R}-m_{\bar{q}R}f_2^{\bar{p}R/2,\bar{q}R}$, and $h=M_{\bar{q}R}f_2^{\bar{p}R/2,\bar{q}R}-f_1^{\bar{p}R/2,\bar{q}R}$. Note that on $D_{\bar{p}R/2}$  the functions  $g$ and $h$ are  the  Poisson integrals of non-negative measures. If $D_{\bar{p}R}\ne\emptyset$, then by \eqref{44_18 - infty}
		\begin{align*}
			\sup_{D_{\bar{p}R}}\frac{f_1^{\bar{p}R/2,\bar{q}R}}{f_2^{\bar{p}R/2,\bar{q}R}}-m_{\bar{q}R}=\sup_{D_{\bar{p}R}}\frac{g}{f_2^{\bar{p}R/2,\bar{q}R}}&\le c^4 \inf_{D_{\bar{p}R}}\frac{g}{f_2^{\bar{p}R/2,\bar{q}R}}\\
			&=c^4\left(\inf_{D_{\bar{p}R}}\frac{f_1^{\bar{p}R/2,\bar{q}R}}{f_2^{\bar{p}R/2,\bar{q}R}}-m_{\bar{q}R}\right),
		\end{align*}
		and similarly
		\begin{align*}
			M_{\bar{q}R}-\inf_{D_{\bar{p}R}}\frac{f_1^{\bar{p}R/2,\bar{q}R}}{f_2^{\bar{p}R/2,\bar{q}R}}	\le c^4\left(M_{\bar{q}R}-\sup_{D_{\bar{p}R}}\frac{f_1^{\bar{p}R/2,\bar{q}R}}{f_2^{\bar{p}R/2,\bar{q}R}}\right).
		\end{align*}
		By adding these two inequalities we obtain
		\begin{align}\phantomsection\label{52_18 - infty}
			(c^4+1)\left(\sup_{D_{\bar{p}R}}\frac{f_1^{\bar{p}R/2,\bar{q}R}}{f_2^{\bar{p}R/2,\bar{q}R}}-\inf_{D_{\bar{p}R}}\frac{f_1^{\bar{p}R/2,\bar{q}R}}{f_2^{\bar{p}R/2,\bar{q}R}}\right)\le(c^4-1)(M_{\bar{q}R}-m_{\bar{q}R}).
		\end{align}
		
		Let us examine consequences of the following assumption:
		\begin{align}\phantomsection\label{53_18 - infty}
			\Lambda_{0,\bar{q}R}(f_i^*)\le \varepsilon \Lambda_{0,\bar pR/2,\bar qR}(f_i^*),
		\end{align}
		for $\bar p$ big enough such that $j(|z-y|)\le c j(|z|)$ for all $z\in D_{\bar pR/2}$ and $y\in \overline B_{\bar qR}$ (see \eqref{nastelavanje2}). We have for all $x\in D_{\bar pR/2}$ and $y\in \overline B_{\bar qR}$
		\begin{align*}
			P_{D_{\bar pR/2}}(x,y)=\int_{D_{\bar pR/2}}G_{D_{\bar pR/2}}(x,z)j(|z-y|)dz\le cP_{D_{\bar pR/2}}(x,0),
		\end{align*}
		hence
		\begin{align*}
			\widetilde{f}_i^{\bar{p}R/2,\bar{q}R}(x)=\int_{D_{\bar qR}^R}P_{D_{\bar pR/2}}(x,y)f_i^*(dy)&\le cP_{D_{\bar pR/2}}(x,0)\Lambda_{0,\bar qR}(f_i^*).
		\end{align*}
		From the previous inequality using the assumption \eqref{53_18 - infty} and Lemma \ref{lemma 7}$(b)$ we have for $x\in D_{\bar p R}$
		\begin{align*}
			\widetilde{f}_i^{\bar{p}R/2,\bar{q}R}(x)&\le c\,\varepsilon P_{D_{\bar pR/2}}(x,0)\Lambda_{0,\bar pR/2,\bar qR}(f_i^*)\\
			&\le c\,\varepsilon P_{D_{\bar pR/2}}(x,0)\Lambda_{0,\bar pR}(f_i^{\bar{p}R/2,\bar{q}R*})\le c^2\varepsilon f_i^{\bar{p}R/2,\bar{q}R}(x).
		\end{align*}
		Recall $f_i=f_i^{\bar{p}R/2,\bar{q}R}+\widetilde{f}_i^{\bar{p}R/2,\bar{q}R}$ on $D_{\bar p R/2}$ so the previous inequality and \eqref{52_18 - infty} yield
		\begin{align*}
			(c^4+1)\left(	M_{\bar{p}R}/(1+c^2\varepsilon)-m_{\bar{p}R}(1+c^2\varepsilon)\right)\le(c^4-1)(M_{\bar{q}R}-m_{\bar{q}R}).
		\end{align*}
		Since $m_{\bar{p}R}\ge m_{\bar{q}R}$, dividing by $m_{\bar{q}R}$ we finally get
		\begin{align}\phantomsection\label{54_18 - infty}
			\mathrm{RO}_{D_{\bar pR}}\frac{f_1}{f_2}\le (1+c^2\varepsilon)^2+(1+c^2\varepsilon)\frac{c^4-1}{c^4+1}\left(\mathrm{RO}_{D_{\bar qR}}\frac{f_1}{f_2}-1\right).
		\end{align}

		We now come to the conclusion of our considerations. Let $\eta > 0$. If $\varepsilon$ is small
		enough, then the right hand side of \eqref{51_18 - infty} is smaller than $1+\eta$ and the right hand side
		of \eqref{54_18 - infty} does not exceed $\varphi(\mathrm{RO}_{D_{\bar qR}} (f_1/f_2))$, where
		\begin{align*}
			\varphi(t)=1+\frac{\eta}{2}+\frac{c^4}{c^4+1}(t-1),\quad t\ge 1.
		\end{align*}
		Let $\varphi^1=\varphi$, $\varphi^{l+1}=\varphi\circ\varphi^l$, $l\in\N$. Observe that $\varphi$ is an increasing linear contraction with a fixed point $t=1+\eta(c^4+1)/2$. Thus the $l$-fold compositions $\varphi^l(c^4)$ converge to $1+\eta(c^4+1)/2$ as $l\to\infty$. In what follows let $l$ be such that 
		\begin{align*}
			\varphi^l(c^4)<1+\eta(c^4+1).
		\end{align*}
		Let $k$ be the smallest integer such that $k-1>c^2/\varepsilon^2$. We denote $n=lk$.  Note that $n$ depends only on $\eta$ and $\phi$.  Let $q_0=2$, $q_{j+1}=p(q_j, \varepsilon,R)$ for $j=0,1,\dots,n-1$, from \eqref{nastelavanje2}, and $\delta=\frac{1}{q_n}$. Note that $\delta$ depends only  on  $\eta$, $R$ and $\phi$. If for any $j<n$, \eqref{49_18 - infty} holds with $q=q_j$ and $p=p(q)=q_{j+1}$, then
		\begin{align*}
			\mathrm{RO}_{D_{R/\delta }}\frac{f_1}{f_2}\le\mathrm{RO}_{D_{q_{j+1}R}}\frac{f_1}{f_2}\le 1+\eta,
		\end{align*}
		by the definition of $\varepsilon$ and \eqref{51_18 - infty}. Otherwise for $j=0,\dots,n-1$, we have $\Lambda_{0,q_{j+1}R,q_{j}R}(f_i^*)>\varepsilon \Lambda_{0,q_{j}R}(f_i^*)$ for $i=1$ or $i=2$. Note that by Lemma \ref{lemma 7}$(b)$
		\begin{align*}
			c^{-1}\frac{f_i(x)}{\Lambda_{0,q_{j}R}(f_i^*)}\le P_{D_{q_jR/2}}(x,0)\le c\frac{f_{3-i}(x)}{\Lambda_{0,q_{j}R}(f_{3-i}^*)},\quad x\in D_{q_{j+1}R,q_{j}R}.
		\end{align*}
		Hence $\Lambda_{0,q_{j+1}R,q_{j}R}(f_i^*)/\Lambda_{0,q_{j}R}(f_i^*)\le c^2 \Lambda_{0,q_{j+1}R,q_{j}R}(f_{3-i}^*)/\Lambda_{0,q_{j}R}(f_{3-i}^*)$ and so $\Lambda_{0,q_{j+1}R,q_{j}R}(f_i^*)\ge c^{-2}\varepsilon\Lambda_{0,q_{j}R}(f_i^*)$ for both $i=1$ and $i=2$ (and all $j=0,\dots,n-1)$. If $0\le j<l$ and $\bar p=q_{(j+1)k}$, $\bar q=q_{jk}$, then
		\begin{align*}
			\Lambda_{0,\bar pR/2,\bar qR}(f_i^*)\ge \Lambda_{0,q_{(j+1)k-1}R,q_{jk}R}(f_i^*)\ge(k-1)\frac{\varepsilon}{c^{2}}\Lambda_{0,\bar qR}(f_i^*)\ge \varepsilon^{-1}\Lambda_{0,\bar qR}(f_i^*),
		\end{align*}
		so that \eqref{53_18 - infty} is satisfied. We conclude that \eqref{54_18 - infty} holds. Recall that $q_0=2$ and $\mathrm{RO}_{D_{2R}}(f_1/f_2)\le c^4$. By the definition of $l$ and  the  monotonicity of $\varphi$
		\begin{align*}
			\mathrm{RO}_{D_{q_{lk}R}}\frac{f_1}{f_2}\le\varphi\left(\mathrm{RO}_{D_{q_{(l-1)k}R}}\frac{f_1}{f_2}\right)\le \dots\le \varphi^l\left(\mathrm{RO}_{D_{q_{0}R}}\frac{f_1}{f_2}\right)\le 1+\eta(c^4+1),
		\end{align*}
		i.e. $\mathrm{RO}_{D_{R\delta}}\frac{f_1}{f_2}\le 1+\eta(c^4+1)$. Since $\eta>0$ was arbitrary and $\delta$ is dependant only on $\eta$, $R$ and $\phi$, the proof is complete.
	\end{proof}

	\begin{cor}\phantomsection\label{c:omjer reg harmonijskih}
		Let $D$ be an open set, $D_{reg}$ the set of all regular points for $D$, $z\in\partial D$, and $0<r<1\le R$.
		\begin{enumerate}[(a)]
			\item 
			Let $f_1$ and $f_2$ be non-negative functions which are regular harmonic in $D\cap B(z,r)$ and $f_i=0$ on $(\overline D^c\cup D_{reg})\cap B(z,r)$, $i=1,2$. Then
			$$ \lim_{D\ni x\to z}\frac{f_1(x)}{f_2(x)}$$
			exists and is finite.
			\item
			 Assume \ref{H2} and \ref{E2}.  If $f_1$ and $f_2$ are non-negative functions which are regular harmonic in $D\cap \overline{B}^c_R$ and $f_i=0$ on $(\overline D^c\cup D_{reg})\cap \overline B_R^c$, $i=1,2$, then
			$$ \lim_{D\ni x\to\infty}\frac{f_1(x)}{f_2(x)}$$
			exists and is finite.
		\end{enumerate}
		Moreover, the speed of convergence in the limits above does not depend on the set $D$.
	\end{cor}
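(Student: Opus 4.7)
The plan is to recognize each $f_i$ as a Poisson integral of a measure supported outside a small (resp.\ inside a large) ball, and then read off the corollary from the uniform oscillation estimate in Lemma \ref{lemma 8}. For part (a) I would translate so that $z=0$ and set $U:=D\cap B_r$. Starting from regular harmonicity
$$f_i(x)=\int_{U^c}f_i(y)\,\omega_U^x(dy),\qquad x\in U,$$
I split $U^c=(D^c\cap B_r)\cup B_r^c$. On $D^c\cap B_r$ the function $f_i$ vanishes, except possibly at irregular points of $\partial D$, which form a polar set and receive no mass from $\omega_U^x$, so this piece contributes nothing. On $\overline B_r^c$ (open and contained in the interior of $U^c$) we have $\omega_U^x(dy)=P_U(x,y)\,dy$, and $\omega_U^x$ assigns no mass to the Lebesgue-null sphere $\partial B_r$. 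Setting $\lambda_i(dy):=f_i(y)\mathbf 1_{B_r^c}(y)\,dy$, a $\sigma$-finite measure on $B_r^c$ for which \eqref{eq:konacnost tocke PDlambda} holds since $f_i(x_0)<\infty$, we obtain the key representation
$$f_i(x)=P_U\lambda_i(x),\qquad x\in U.$$

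With this representation, Lemma \ref{lemma 8}(a) applied with $R=r$ yields, for every $\eta>0$, a $\delta=\delta(\eta,r)\in(0,r)$ independent of $D$, $\lambda_1$ and $\lambda_2$, with
$$\mathrm{RO}_{D\cap B_\delta}\frac{f_1}{f_2}=\mathrm{RO}_{U\cap B_\delta}\frac{P_U\lambda_1}{P_U\lambda_2}\le 1+\eta.$$
Choosing $\eta_n\downarrow 0$ produces $\delta_n\downarrow 0$ along which the oscillation of $f_1/f_2$ on $D\cap B_{\delta_n}$ tends to $1$; this Cauchy-type shrinking forces the limit $\lim_{D\ni x\to 0}f_1(x)/f_2(x)$ to exist, and the bound at any fixed $\eta$ yields finiteness. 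Since each $\delta_n$ depends only on $\eta_n$, $r$ and $\phi$, the speed of convergence is independent of $D$.

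Part (b) is entirely symmetric, with $U:=D\cap\overline B_R^c$: decomposing $U^c=(D^c\cap\overline B_R^c)\cup\overline B_R$, the vanishing of $f_i$ on $(\overline D^c\cup D_{reg})\cap\overline B_R^c$ together with the polarity of irregular points kills the $D^c\cap\overline B_R^c$ contribution, while on $B_R$ the measure $\omega_U^x$ admits density $P_U(x,\cdot)$; hence $f_i=P_U\lambda_i$ with $\lambda_i(dy):=f_i(y)\mathbf 1_{B_R}(y)\,dy$ supported in $\overline B_R$. Lemma \ref{lemma 8}(b) then delivers $\mathrm{RO}_{D\cap\overline B_{1/\delta}^c}(f_1/f_2)\le 1+\eta$ with $\delta$ independent of $D$, and the same reasoning gives existence, finiteness and $D$-independent rate for $\lim_{D\ni x\to\infty}f_1(x)/f_2(x)$. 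The only real obstacle is the representation step $f_i=P_U\lambda_i$: it uses the polarity of the irregular boundary points of $D$ (so that the $D^c$ piece of the mean-value formula vanishes once $f_i$ is killed on $D_{reg}$) and the absolute continuity of $\omega_U^x$ on the interior of $U^c$ with density $P_U(x,\cdot)$; once this is in hand, the corollary is a direct consequence of the uniformity built into Lemma \ref{lemma 8}.
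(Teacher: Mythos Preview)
Your proposal is correct and follows essentially the same approach as the paper: both arguments represent each $f_i$ on $D\cap B(z,r)$ (resp.\ $D\cap\overline B_R^c$) as the Poisson integral $P_U\lambda_i$ of a measure supported on $B(z,r)^c$ (resp.\ $\overline B_R$) and then invoke Lemma~\ref{lemma 8}. Your write-up is in fact more careful than the paper's, which simply asserts the representation $f_i(x)=\int_{B(z,r)^c}P_{D\cap B(z,r)}(x,y)f_i(y)\,dy$ without spelling out the polarity of irregular points or the vanishing of $\omega_U^x$ on the sphere.
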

	The previous corollary is an immediate consequence of Lemma \ref{lemma 8}, cf. \cite[Theorem 2.4, Theorem 2.8]{vondra_fm_2016} and  \cite[Corollary 2.6, Corollary 2.12]{vondra_RMI_2018} where the speed of convergence depends on the set $D$.
	\begin{proof}[Proof of Corollary \ref{c:omjer reg harmonijskih}]
		For part $(a)$ it is enough to notice that from the assumptions of the corollary we have for $x\in D\cap B(z,r)$ and both $i=1,2$
		\begin{align*}
			f_i(x)=\int_{D^c\cup B(z,r)^c}f_i(y)\omega^x_{D\cap B(z,r)}(dy)=\int_{B(z,r)^c}P_{D\cap B(z,r)}(x,y)f_i(y)dy.
		\end{align*}
		The claim now follows from Lemma \ref{lemma 8}$(a)$.  Part $(b)$ follows similarly. 
	\end{proof}

	The following results generalize \cite[Lemma 12]{bogdan_est_and_struct}.
	\begin{lem}\phantomsection\label{lemma 12}
		For every $0<\rho<1$ and $\eta>0$ there is $r>0$ such that for all open $D$  it holds 
		\begin{align}\phantomsection\label{85_18}
			\mathrm{RO}_{y\in \overline{D}\cap B_r} M_D(x,y)\le 1+\eta,\quad \text{if } x,x_0\in D\setminus \overline{B}_\rho,
		\end{align}
		and with the additional assumptions \ref{H2} and \ref{E2}  it holds 
		\begin{align}\phantomsection\label{86_18}
			\mathrm{RO}_{y\in D^*\setminus \overline B_{1/r}} M_D(x,y)\le 1+\eta,\quad \text{if } x,x_0\in D \cap B_{1/\rho}.
		\end{align}
	\end{lem}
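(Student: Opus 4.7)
The plan is to express the Martin kernel $M_D(x,\cdot)=G_D(x,\cdot)/G_D(x_0,\cdot)$ as a ratio of two Poisson integrals $P_V\mu_x/P_V\mu_{x_0}$ on an appropriately chosen auxiliary set $V\subset D$ and then to invoke the uniform oscillation estimate of Lemma \ref{lemma 8}. I describe the argument for \eqref{85_18} in detail; \eqref{86_18} is proved by the same scheme with $V=D\cap\overline{B}^c_{R_0}$ for some $R_0>1/\rho$ and with Lemma \ref{lemma 8}$(b)$ taking the place of Lemma \ref{lemma 8}$(a)$, which is why the additional assumptions \ref{H2} and \ref{E2} are needed there.

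Fix $\rho_0\in(0,\rho)$ (e.g.\ $\rho_0=\rho/2$) and set $V:=D\cap B_{\rho_0}$, so that every $x\in D\setminus\overline B_\rho$ automatically lies in $\overline V^c$. Symmetry of $G_D$ combined with the strong Markov formula \eqref{eq:GD SMP}, together with $G_V(y,x)=0$, gives
\begin{align*}
G_D(x,y)=G_D(y,x)=\int_{V^c}G_D(w,x)\,\omega_V^y(dw),\qquad y\in V.
\end{align*}
The main step is to identify this with $P_V\mu_x(y)$ for $\mu_x(dw):=G_D(w,x)\mathbf{1}_{\overline V^c}(w)\,dw$. On the interior $\overline V^c$ of $V^c$ the measure $\omega_V^y$ has density $P_V(y,\cdot)$. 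The residual contribution from $\partial V\subset\partial D\cup\partial B_{\rho_0}$ vanishes for two reasons: the subordinate Brownian motion is pure-jump ($b=0$ under \ref{H1}) with absolutely continuous Lévy measure, so $\omega_V^y$ is absolutely continuous on the whole of $V^c$, forcing $\omega_V^y(\partial B_{\rho_0})=0$; and on $\partial D$ the integrand $G_D(\cdot,x)$ is zero off the set of irregular points, which is polar and hence Lebesgue-null. Moreover $\mu_x(B_{\rho_0})=0$, because $B_{\rho_0}\cap\overline V^c\subset D^c$ and $G_D(\cdot,x)$ vanishes on $\overline D^c$.

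Consequently $\mu_x$ and $\mu_{x_0}$ are non-negative $\sigma$-finite measures supported in $B_{\rho_0}^c$ and satisfying \eqref{eq:konacnost tocke PDlambda} (indeed $P_V\mu_x(y)=G_D(x,y)<\infty$ for $y\in V$). Lemma \ref{lemma 8}$(a)$, applied with $R=\rho_0\in(0,1)$ to the open set $V\subset B_{\rho_0}$, then produces $\delta=\delta(\eta,\rho)>0$ (independent of $D$) such that
\begin{align*}
\mathrm{RO}_{y\in V\cap B_\delta}M_D(x,y)=\mathrm{RO}_{y\in V\cap B_\delta}\frac{P_V\mu_x(y)}{P_V\mu_{x_0}(y)}\le 1+\eta.
\end{align*}
Choosing $r:=\min(\delta,\rho_0)$, every $y\in D\cap B_r$ lies in $V\cap B_\delta$, and for $y\in\partial D\cap B_r$ the same bound passes to the limit by the very definition of $M_D(x,y)$ as $\lim_{D\ni v\to y}G_D(x,v)/G_D(x_0,v)$. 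The principal technical obstacle is the identification $G_D(x,y)=P_V\mu_x(y)$ of the previous paragraph, since $V=D\cap B_{\rho_0}$ is not Lipschitz in general and \eqref{eq:PD Lipschitz set} is therefore unavailable; it is handled by combining the polarity of irregular $\partial D$-points with the purely discontinuous character of $X$.
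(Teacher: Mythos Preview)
Your approach is essentially the paper's: write $G_D(\tilde x,\cdot)$ as a Poisson integral on $V=D\cap B_{\rho_0}$ via \eqref{eq:GD SMP} and invoke Lemma~\ref{lemma 8}. The paper is even terser---it uses $V=D\cap B_\rho$ directly (your auxiliary $\rho_0<\rho$ is unnecessary since $x\in D\setminus\overline B_\rho$ already lies in $\overline{D\cap B_\rho}^{\,c}$) and simply asserts the identity $G_D(\tilde x,y)=P_{D\cap B_\rho}[G_D(\tilde x,v)\,dv](y)$ as a consequence of \eqref{eq:GD SMP}.

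One remark on the technical point you raise. Your assertion that ``$\omega_V^y$ is absolutely continuous on the whole of $V^c$'' because $X$ is pure-jump with absolutely continuous L\'evy measure is stronger than what the paper provides (the reference \eqref{eq:PD Lipschitz set} is only for Lipschitz $V$) and is not obvious for general $V$. A cleaner route to $\omega_V^y(D\cap\partial B_{\rho_0})=0$ is the inclusion of events: on $\{X_{\tau_V}\in D\cap\partial B_{\rho_0}\}$ one has $\tau_V<\tau_D$, hence $\tau_V=\tau_{B_{\rho_0}}$, so
\[
\omega_V^y(D\cap\partial B_{\rho_0})\le \omega_{B_{\rho_0}}^y(\partial B_{\rho_0})=0
\]
by \eqref{eq:PD Lipschitz set} applied to the ball $B_{\rho_0}$.
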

	\begin{proof}
		Let $1>\rho>r>0$. Note that $$\sup\limits_{y\in \overline{D}\cap B_r} M_D(x,y)=\sup\limits_{y\in D\cap B_r}\frac{G_D(x,y)}{G_D(x_0,y)},\enskip\inf\limits_{y\in \overline{D}\cap B_r} M_D(x,y)=\inf\limits_{y\in D\cap B_r}\frac{G_D(x,y)}{G_D(x_0,y)}.$$
		Since $G_D(\tilde x,y)=P_{D\cap B_\rho}[G_D(\tilde x,v)dv](y)$ for $\tilde x\in D\setminus \overline{B}_\rho$, the claim $(a)$ follows from Lemma \ref{lemma 8}$(a)$. For part $(b)$ we apply Lemma \ref{lemma 8}$(b)$ in a similar way.
	\end{proof}
	
	\begin{rem}\phantomsection\label{nepre. Martin}
		From the previous lemma it is clear that the function $z\mapsto M_D(x,z)$ is continuous for every $x\in D$.
	\end{rem}
	
	Now we state two lemmas that appeared in \cite{bogdan_est_and_struct} for the case of the isotropic $\alpha$-stable process. The lemmas will be useful for proving uniqueness of representation of non-negative $L$-harmonic functions with zero outer charge.
	
	\begin{lem}\phantomsection\label{lemma 9}
		 Let $D$ be an open set.  Suppose that $0 \le  g \le  f$ on $D$, and that $f$, $g$ are $L$-harmonic in $D$ with
		zero outer charge. If $U\subset D$ and $f(x) =\int_{U^c} f (y)\omega^x_U(dy)$, $x\in U$, then $g(x) =\int_{U^c} g (y)\omega^x_U(dy)$, $x\in U$.
	\end{lem}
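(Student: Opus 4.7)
The plan is to isolate a super-mean-value principle for non-negative $L$-harmonic functions with zero outer charge over arbitrary open $U\subset D$, and then exploit linearity. Set $h := f - g$; then $h\ge 0$, and subtracting the mean-value identity \eqref{mean-value-property} for $g$ from that for $f$ shows (by linearity in the function and the measure) that $h$ is $L$-harmonic in $D$ with zero outer charge. For any non-negative $\psi : D \to [0,\infty)$ with finite integral below, define
$$
T_U \psi (x) := \int_{U^c} \psi(y)\,\omega_U^x(dy), \quad x \in U,
$$
with the convention $\psi \equiv 0$ on $D^c$, so the integration reduces to $D \setminus U$. The hypothesis reads $T_U f = f$ on $U$, while $T_U f = T_U g + T_U h$ by linearity. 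Hence it would suffice to prove the super-mean-value inequalities $T_U g \le g$ and $T_U h \le h$ on $U$: combined with $T_U g + T_U h = f = g+h$, these force both to be equalities, and in particular $T_U g = g$ on $U$.

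To establish $T_U \psi \le \psi$ for any non-negative $L$-harmonic $\psi$ in $D$ with zero outer charge, I would fix $x\in U$ and choose an increasing sequence of open sets $U_n \subsub D$ with $U_n \subset U$ and $\bigcup_n U_n = U$ (for instance, $U_n = \{y\in U : \delta_{D^c}(y)>1/n\}\cap B_n$). For $n$ large enough that $x\in U_n$, applying \eqref{mean-value-property} with $U_n \subsub D$ gives
$$
\psi(x) \;=\; \int_{D\setminus U_n} \psi(y)\,\omega_{U_n}^x(dy) \;=\; \ex_x\!\left[\psi(X_{\tau_{U_n}})\mathbf{1}_{\{X_{\tau_{U_n}} \in D\}}\right].
$$
Since $\tau_{U_n} \uparrow \tau_U$ predictably, quasi-left-continuity of the Hunt process $X$ yields $X_{\tau_{U_n}} \to X_{\tau_U}$ almost surely on $\{\tau_U < \infty\}$. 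On the event $\{X_{\tau_U}\in D\}$, openness of $D$ forces $X_{\tau_{U_n}}\in D$ for all large $n$, and continuity of $\psi$ on $D$ (Proposition \ref{p:neprekidnost harmonijskih}) gives $\psi(X_{\tau_{U_n}}) \to \psi(X_{\tau_U})$. On $\{X_{\tau_U}\notin D\}$, the target $\psi(X_{\tau_U})\mathbf{1}_{\{X_{\tau_U}\in D\}}$ is zero while the pre-limit integrand is non-negative. Fatou's lemma therefore yields
$$
\psi(x) \;=\; \liminf_n \ex_x\!\left[\psi(X_{\tau_{U_n}})\mathbf{1}_{\{X_{\tau_{U_n}}\in D\}}\right] \;\ge\; \ex_x\!\left[\psi(X_{\tau_U})\mathbf{1}_{\{X_{\tau_U}\in D\}}\right] \;=\; T_U\psi(x),
$$
which is the required super-mean-value inequality.

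The main delicacy is the Fatou step on the event $\{X_{\tau_U}\notin D\}$: one must avoid any claim about the boundary behaviour of $\psi$ near $\partial D$, since $\psi$ was only extended by zero outside $D$. Keeping the indicator $\mathbf{1}_{\{X_{\tau_{U_n}}\in D\}}$ inside the expectation and appealing only to $\psi\ge 0$ on that event circumvents this issue entirely. The rest is then a clean linearity argument, and no appeal to \ref{H2} or \ref{E2} is needed.
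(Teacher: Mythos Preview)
Your argument is correct and follows the same route as the proof in \cite[Lemma 9]{bogdan_est_and_struct} to which the paper defers: reduce to a super-mean-value inequality for non-negative $L$-harmonic functions with zero outer charge via an exhaustion $U_n\subsub D$, $U_n\uparrow U$, quasi-left-continuity, and Fatou, then use linearity with $h=f-g$. One cosmetic point: in the paper's notation $\delta_{D^c}(y)=\mathrm{dist}(y,D)$, which vanishes on $D$; you mean $\delta_D(y)>1/n$ (or $\delta_U(y)>1/n$), and with that reading your sets $U_n$ indeed satisfy $U_n\subsub D$ and $\bigcup_n U_n=U$.
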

	\begin{proof}
		The proof is the same as in \cite[Lemma 9]{bogdan_est_and_struct}.
	\end{proof}

	\begin{lem}\phantomsection\label{lemma 10}
		Let $D_1$ and $D_2$ be open sets such that
		\begin{align*}
			\mathrm{dist}(D_1 \setminus D_2 , D_2 \setminus D_1) > 0 .
		\end{align*}
		Set $D = D_1\cup D_2$ and assume that $\omega_D^x(D^c) > 0$ for one (and therefore for all)
		$x\in D$. Let $f \ge 0$ be a function on $\R^d$ such that $f = 0$ on $D^c$, and for $i = 1, 2$ and all $x\in D_i$ we have
		\begin{align*}
			f (x) =\int	f (y)\omega_{D_i}^x (dy).
		\end{align*}
		Let $D_1$ be bounded and if $D_2$ is unbounded assume \ref{H2}. Then $f = 0$ on the whole of $D$.
	\end{lem}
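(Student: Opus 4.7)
The plan is to iterate the mean-value hypothesis along a sequence of alternating exit times between $D_1$ and $D_2$, exploiting both the geometric gap $\rho := \mathrm{dist}(D_1 \setminus D_2, D_2 \setminus D_1) > 0$ and the non-degeneracy $\omega_D^x(D^c) > 0$. This is the strategy used in \cite[Lemma 10]{bogdan_est_and_struct} for the isotropic stable case, and its skeleton transfers to the present setting once the jumping kernel is handled with a little more care.

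Since $f \equiv 0$ on $D^c$ and $\omega_{D_i}^x$ sits in $D_i^c$, for $x \in D_i$ the hypothesis reduces to
\begin{equation*}
f(x)=\int_{D_j \setminus D_i} f(y)\,\omega_{D_i}^x(dy),\qquad \{i,j\}=\{1,2\}.
\end{equation*}
Fix $x \in D$; up to renaming, $x \in D_1$. Define inductively $\sigma_0=0$ and $\sigma_{k+1}=\sigma_k + \tau_{D_{i(k)}} \circ \theta_{\sigma_k}$, where $i(k) \in \{1,2\}$ is the index of whichever component currently contains $X_{\sigma_k}$, terminating the construction as soon as $X_{\sigma_k} \in D^c$. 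Iterating the reduced identity together with the strong Markov property $n$ times yields
\begin{equation*}
f(x)=\ex_x\bigl[f(X_{\sigma_n})\,;\,X_{\sigma_n}\in D\bigr],\qquad n\in\N.
\end{equation*}

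Next I would establish that the survival probability $p_n:=\p_x(X_{\sigma_n}\in D)$ decays geometrically in $n$. This rests on a uniform lower bound of the form $\omega_{D_j}^y(D^c)\ge \eta$ for $y \in D_j\setminus D_i$ with $\{i,j\}=\{1,2\}$ and some $\eta > 0$ independent of $y$. For $y \in D_1 \setminus D_2$, which is bounded, the bound follows from continuity and positivity of $\omega_{D_1}^{\cdot}(D^c)$ combined with the Harnack-type inequality of Theorem \ref{t:thm_1_1}, which propagates the positivity supplied by $\omega_D^z(D^c)>0$ uniformly across $D_1 \setminus D_2$. For $y \in D_2 \setminus D_1$, possibly far from the origin, the same argument is applied using the global versions Lemma \ref{lemma 7}(b) and Theorem \ref{t:thm_1_1}(b), which is precisely why \ref{H2} is imposed when $D_2$ is unbounded. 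Hence $p_n \le (1-\eta)^n \to 0$.

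The main obstacle is the passage to the limit in the iterated identity when $f$ is unbounded. The non-negative martingale $(f(X_{\sigma_n}))_n$ has constant expectation $f(x)$ and converges almost surely to $0$, since $\p_x(N<\infty)=\lim_n p_n$-complement equals one (where $N$ is the termination step) and $f \equiv 0$ on $D^c$; however concluding $f(x)=0$ requires upgrading this to $L^1$ convergence. I would obtain uniform integrability by a truncation-plus-recursion argument: for each threshold $K>0$, the tail contribution $\ex_x[f(X_{\sigma_n});\,f(X_{\sigma_n})>K,\,X_{\sigma_n}\in D]$ is controlled by re-applying the mean-value identity at the intermediate positions and invoking the Harnack-type comparison Theorem \ref{t:thm_1_1}, so that the geometric decay of $p_n$ absorbs the large-value tail as $K\to\infty$. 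Combining these steps yields $f(x)=0$ for every $x\in D$.
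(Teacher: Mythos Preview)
Your iteration via alternating exit times is indeed the skeleton of \cite[Lemma~10]{bogdan_est_and_struct}, which is exactly what the paper invokes. Note, however, a tool mismatch: the paper specifies that inequalities (70) and (71) of \cite{bogdan_est_and_struct} are handled with the \emph{Harnack inequality} \cite[Theorem~7]{grzywny_harnack}, not with the boundary Harnack principle of Theorem~\ref{t:thm_1_1} or Lemma~\ref{lemma 7}. In the original argument Harnack is applied to $f$ itself (exploiting the gap $\rho$ and the boundedness of $D_1$) to compare its values across $D_1\setminus D_2$ and $D_2\setminus D_1$; it is not used to produce a uniform lower bound on $\omega_{D_j}^y(D^c)$. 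Your route through such a bound is a reasonable heuristic, but deducing it from Theorem~\ref{t:thm_1_1} is unclear because $D_j\setminus D_i$ is not compactly contained in $D_j$ and $\omega_{D_j}^y(D^c)$ is not a Poisson integral of the form that theorem treats.

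More seriously, the uniform integrability step is a genuine gap. From the iterated identity you have $\ex_x[f(X_{\sigma_n});\,X_{\sigma_n}\in D]=f(x)$ for every $n$, hence for each fixed $K$,
\[
\ex_x\bigl[f(X_{\sigma_n});\,f(X_{\sigma_n})>K,\,X_{\sigma_n}\in D\bigr]
= f(x)-\ex_x\bigl[f(X_{\sigma_n});\,f(X_{\sigma_n})\le K,\,X_{\sigma_n}\in D\bigr]
\ge f(x)-K\,p_n \xrightarrow[n\to\infty]{} f(x).
\]
Thus the geometric decay of $p_n$ drives the tail expectation \emph{towards} $f(x)$, not to $0$; it cannot ``absorb'' anything, and UI does not follow (unless $f(x)=0$, which is what you are trying to prove). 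What actually closes the argument in \cite{bogdan_est_and_struct} is that the Harnack inequality, together with the gap $\rho$ and the boundedness of $D_1$, gives direct control of $f$ on the sets $D_i\setminus D_j$, yielding the contraction without any martingale/UI detour. That is the missing ingredient, and it is precisely where \cite[Theorem~7]{grzywny_harnack} enters.
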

	\begin{proof}
		The proof is the same as in \cite[Lemma 10]{bogdan_est_and_struct} where for inequalities $(70)$ and $(71)$ we use the Harnack inequality for the subordinate Brownian motion \cite[Theorem 7]{grzywny_harnack}.
	\end{proof}

	Now we have a generalization of \cite[Lemma 14]{bogdan_est_and_struct}.
	\begin{prop}[Martin representation]\phantomsection\label{lemma 14}
		Let $D$ be an open set. If $D$ is unbounded we  additionally assume \ref{H2} and \ref{E2}. Suppose $f\ge 0$ is $L$-harmonic on $D$ with zero outer charge. Then there is a unique finite measure $\mu\ge0$ on $\partial_M D$ such that
		\begin{align}\phantomsection\label{87_18}
			f(x)=\int_{\partial_M D}M_D(x,y)\mu(dy),
		\end{align}
		and we have $W_Df=\mu$. Conversely, if  $\mu$ is a finite measure on $\partial_M D$ and $f(x)\coloneqq\int_{\partial_M D}M_D(x,y)\mu(dy)$, then $f$ is $L$-harmonic with zero outer charge.
	\end{prop}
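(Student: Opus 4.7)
The plan is to prove the converse first (a direct Fubini computation), then existence via exhaustion together with the uniform oscillation estimates, and finally uniqueness and the identification $W_Df=\mu$.

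For the converse, let $\mu\ge 0$ be a finite measure on $\partial_M D$ and set $f=M_D\mu$. Theorem \ref{Martinova jezgra}(a) says that for each $y\in\partial_M D$, $z\mapsto M_D(z,y)$ is $L$-harmonic in $D$ with zero outer charge; Fubini's theorem (all integrands non-negative) then gives, for any $U\subsub D$ and $x\in U$,
\begin{align*}
f(x)=\int_{\partial_M D}\int_{D\setminus U}M_D(z,y)\omega_U^x(dz)\,\mu(dy)=\int_{D\setminus U}f(z)\omega_U^x(dz).
\end{align*}
Since $f(x_0)=\mu(\partial_M D)<\infty$, Theorem \ref{t:thm_1_1} forces $f$ to be finite throughout $D$, so $f$ is $L$-harmonic in $D$ with zero outer charge.

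For existence, fix an exhaustion $U_n\uparrow D$ of $D$ by Lipschitz open sets with $x_0\in U_1$ and $\overline{U_n}\subset U_{n+1}$. The mean-value property \eqref{mean-value-property} combined with \eqref{eq:PD Lipschitz set} yields, for $x\in U_n$,
\begin{align*}
f(x)=\int_{D\setminus U_n}f(y)P_{U_n}(x,y)\,dy.
\end{align*}
Expanding $P_{U_n}(x,y)=\int_{U_n}G_{U_n}(x,z)j(|z-y|)\,dz$ and swapping integrals gives
\begin{align*}
f(x)=\int_{D}K_n(x,z)\,\eta_{U_n}f(dz),\qquad K_n(x,z):=\frac{G_{U_n}(x,z)}{G_{U_n}(x_0,z)},
\end{align*}
where $\eta_{U_n}f$ is the measure of Definition \ref{boundary operator}. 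Taking $x=x_0$ shows $\eta_{U_n}f(\R^d)=f(x_0)$ for every $n$, so the total masses are uniformly bounded. By the argument of Remark \ref{rubni operator}(a), $\eta_{U_n}f(A)\to 0$ for each $A\subsub D$, so any weak limit on the compactification $D^*$ must be supported on $\partial^*D$. Prokhorov's theorem then produces a subsequence $\eta_{U_{n_k}}f\rightharpoonup\mu$ with $\mu$ finite and concentrated on $\partial^*D$.

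To identify $f=M_D\mu$, I pass to the limit in the displayed identity. For fixed $x$, $K_n(x,z)\to M_D(x,z)$ pointwise on $D\setminus\{x_0\}$; the delicate step is the boundary behavior. Lemma \ref{lemma 12} (whose proof exploits Lemma \ref{lemma 8}) provides, for every $z^*\in\partial^*D$ and every $\varepsilon>0$, a neighborhood $V$ of $z^*$ in $D^*$ on which $\mathrm{RO}_{V\cap D}K_n(x,\cdot)\le 1+\varepsilon$ uniformly in $n$. Hence $K_n(x,\cdot)$ can be approximated uniformly on $D^*$ by a continuous function agreeing with $z\mapsto M_D(x,z)$ on $\partial^*D$ up to error $\varepsilon$, and weak convergence of $\eta_{U_{n_k}}f$ gives $f(x)=\int_{\partial^*D}M_D(x,y)\mu(dy)$. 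If $\mu$ had any mass on an inaccessible boundary point, the strict inequality in Theorem \ref{Martinova jezgra}(b) combined with the mean-value property of $f$ would produce a contradiction, so $\mu$ is supported on $\partial_MD$.

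For uniqueness, suppose $M_D\mu_1=M_D\mu_2=f$ for two finite measures on $\partial_MD$. Since both $\mu_j$ arise as subsequential weak limits of $\eta_{U_n}f$, it suffices to show that any two such limits coincide. This can be carried out through a test-function/minimality argument combining Lemmas \ref{lemma 9} and \ref{lemma 10} in the spirit of \cite{bogdan_est_and_struct}. Once uniqueness is settled, the full sequence $\eta_{U_n}f$ converges weakly to $\mu$, which by Definition \ref{boundary operator} means $W_Df=\mu$. The main obstacle is the boundary-oscillation control in the identification step: without the uniformity in the open set built into Lemma \ref{lemma 8}, the kernels $K_n(x,\cdot)$ could oscillate near $\partial^*D$ in an $n$-dependent way, and weak convergence of $\eta_{U_{n_k}}f$ alone would be insufficient to interchange the limit and the integral.
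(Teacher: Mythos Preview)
Your approach is essentially the paper's: exhaustion, the identity $f(x)=\int K_n(x,z)\,\eta_{U_n}f(dz)$, weak compactness of $(\eta_{U_n}f)_n$, and identification of the limit via the uniform oscillation control of Lemma~\ref{lemma 12}. The converse and the reduction of $\mu$ to $\partial_MD$ via Theorem~\ref{Martinova jezgra}(b) are also handled as in the paper.

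The genuine gap is in uniqueness. Your sentence ``Since both $\mu_j$ arise as subsequential weak limits of $\eta_{U_n}f$'' is not justified and is in fact the heart of the matter: given an \emph{arbitrary} finite measure $\mu$ on $\partial_MD$ with $M_D\mu=f$, nothing you have proved says that $\mu$ occurs as a weak limit of $(\eta_{U_n}f)_n$. What the existence argument gives is only that \emph{some} subsequential limit $\mu^*$ represents $f$; showing $\mu^*=\mu$ is precisely the uniqueness statement you are trying to prove, so the reasoning is circular.

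The paper closes this gap in two steps, and you should indicate them rather than merely citing Lemmas~\ref{lemma 9} and~\ref{lemma 10}. First, one proves uniqueness for the extremals $f=M_D(\cdot,z_0)$: if $M_D\nu=M_D(\cdot,z_0)$ for some finite $\nu$ on $\partial_MD$, then $\nu=\delta_{z_0}$. This uses the special mean-value identities of Remark~\ref{o mvp za martinove} (equations \eqref{(3.14)} and \eqref{(3.4)}), together with Lemmas~\ref{lemma 9} and~\ref{lemma 10}, to show that the part of $\nu$ away from $z_0$ produces a function which is regular harmonic for two overlapping subdomains and hence vanishes. Second, for a general finite $\mu$ on $\partial_MD$ with $f=M_D\mu$, one uses Fubini, Fatou, and the Dirac case just established to obtain the portmanteau inequality
\[
\liminf_{n\to\infty}\eta_{U_n}f(A)\ \ge\ \int_{\partial_MD}\liminf_{n\to\infty}\eta_{U_n}[M_D(\cdot,z)](A)\,\mu(dz)\ \ge\ \int_{\partial_MD}\delta_z(A)\,\mu(dz)\ =\ \mu(A)
\]
for every relatively open $A\subset D^*$. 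Since the total masses are all equal to $f(x_0)$, this forces every subsequential limit to equal $\mu$, which gives both uniqueness and $W_Df=\mu$.
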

	
	Before we prove the proposition we connect the result with the Martin boundary of $D$ with respect to $X^D$ in the sense of Kunita-Watanabe, see \cite{kunita_watanabe}.  From \cite{vondra_fm_2016,vondra_RMI_2018} it follows that in our setting the (abstract) Martin boundary of the set $D$ can be identified with $\partial^*D$. Also, the \textit{minimal} Martin boundary can be identified with $\partial_MD$. However, in \cite[Corollary 1.2 \& Corollary 1.4]{vondra_RMI_2018} the Martin representation of harmonic functions with respect to $X^D$ was proved only for the case $\partial_MD=\partial^*D$, cf. \cite[Theorem 4]{kunita_watanabe}. Hence, the proposition above extends \cite[Corollary 1.2 \& Corollary 1.4]{vondra_RMI_2018} on more general sets but on less general processes.
	\begin{proof}[Proof of Proposition \ref{lemma 14}]
		The second claim is almost trivial. Since  $\mu$ is a finite measure on $\partial_M D$, we have that $f\coloneqq M_D\mu$ is $L$-harmonic in $D$ with zero outer charge because of the harmonicity of the Martin kernel.
		
		The first claim is proved similarly as in \cite[Lemma 14]{bogdan_est_and_struct} but because of some differences at the end of the proof we give the full proof for the reader's convenience. Let $(D_n)_n$ denote an increasing sequence of open sets   with Lipschitz boundary such that for all $n\in \N$ we have $D_n\subsub D$ and $D=\bigcup_{n=1}^\infty D_n$. By the mean-value property  we have for $x\in D_n$
		\begin{align*}
			f(x)&=\int\limits_{D\setminus D_n}P_{D_n}(x,y)f(y)dy\\
			&=\int\limits_{D_n}M_{D_n}(x,v)\left(G_{D_n}(x_0,v)\int\limits_{D\setminus D_n}j(|v-y|)f(y)dy\right)dv\\
			&=\int\limits_{D_n}M_{D_n}(x,v)\eta_{D_n}f(dv),
		\end{align*}
		where $\eta_{D_n}f$ is the measure from Definition \ref{boundary operator}. For brevity's sake, we write $\eta_n$ for $\eta_{D_n}f$. Since $\eta_n(D)=f(x_0)<\infty$, by considering a subsequence we may assume that the sequence $(\eta_n)_n$ weakly converges on $D^*$ to a finite non-negative measure $\mu^*$.
		It follows from Lemma \ref{l:W_D concentration},  more precisely  Remark \ref{rubni operator}$(a)$, that $\mu^*$ is supported on $\partial^*D$.
		
		Let $\varepsilon>0$ and $x\in D$. By Lemma \ref{lemma 12} for every $y\in\partial^*D$ there exists a neighbourhood $V_y$ of $y$ such that 
		\begin{align}\phantomsection\label{eq:crucial step}
			\mathrm{RO}_{V_y\cap U^*}M_U(x,\cdot)\le 1+\varepsilon,
		\end{align} 
		for all $U\in\{D,D_1,D_2,\dots\}$. From $\{V_y:y\in\partial^*D\}$, we select a finite family $\{V_j:j=1,\dots,m\}$ such that $V\coloneqq V_1\cup\dots\cup V_m\supset \partial^*D$. For $j\in\{1,\dots,m\}$ let $z_j\in D\cap V_j$. Let $k$ be so large that for $n>k$ we have $z_j\in D_n$ and 
		\begin{align*}
			(1+\varepsilon)^{-1}\le \frac{M_D(x,z_j)}{M_{D_n}(x,z_j)}\le (1+\varepsilon),\quad j=1,\dots,m.
		\end{align*}
		The last inequality can be achieved because $G_{D_n}\uparrow G_D$ pointwise  in $D$  as $n\to\infty$. If $v\in D_n\cap V_j$, then by \eqref{eq:crucial step} and the last inequality we get
		\begin{align*}
			(1+\varepsilon)^{-3}\le \frac{M_D(x,v)}{M_{D}(x,z_j)}\cdot\frac{M_D(x,z_j)}{M_{D_n}(x,z_j)}\cdot\frac{M_{D_n}(x,z_j)}{M_{D_n}(x,v)}\le (1+\varepsilon)^3.
		\end{align*}
		Therefore
		\begin{align}\phantomsection\label{M_D ocjena}
			(1+\varepsilon)^{-3}\le \frac{\int_{D\cap V}M_D(x,y)\eta_n(dy)}{\int_{D\cap V}M_{D_n}(x,y)\eta_n(dy)}\le (1+\varepsilon)^3,\quad n>k.
		\end{align}
		Notice that $(\eta_n)_n$ also weakly converges to $\mu^*$ on $D^*\cap V$ and that $x,x_0\notin D\cap V$. Recall that $M_D(x,\cdot)$ is continuous and bounded on $D^*\cap V$ (see Lemma \ref{lemma 12}). Therefore 
		\begin{align*}
			\int_{D\cap V}M_D(x,y)\eta_n(dy)\to \int_{D^*\cap V}M_D(x,y)\mu^*(dy)=\int_{\partial^*D}M_D(x,y)\mu^*(dy).
		\end{align*}
		Also, note that $f(x)=\int_{D\cap V}M_{D_n}(x,y)\eta_n(dy)+\int_{D\cap V^c}M_{D_n}(x,y)\eta_n(dy)$ and that there is $k$ so large  such  that $D\cap V^c\subset D_k$. 
		Hence
		\begin{align*}
			&\int_{D\cap V^c}M_{D_n}(x,y)\eta_n(dy)\le \int_{D_k}M_{D_n}(x,y)\eta_n(dy)\\
			&\qquad\qquad\qquad=\int\limits_{D_k}G_{D_n}(x,v)\int\limits_{D\setminus D_n}j(|v-y|)f(y)dydv\\
			&\qquad\qquad\qquad\le c_k\left(\int\limits_{D_k}G_D(x,v)dv\right)\left(\int\limits_{D\setminus D_n}f(y)(1\wedge j(|y|))dy\right)\overset{n\to\infty}{\longrightarrow} 0,
		\end{align*}
		since $f\in\mathcal{L}^1$ by Lemma \ref{l:harmonijske su u L^1} and since $G_D(x,\cdot)\in L^1_{loc}$ which we get from \eqref{eq:G comparability}.
		By letting $n\to\infty$ in \eqref{M_D ocjena} we obtain
		\begin{align*}
			(1+\varepsilon)^{-3}\le \frac{\int_{\partial^*D}M_D(x,y)\mu^*(dy)}{f(x)}\le (1+\varepsilon)^3.
		\end{align*}
		i.e. $f(x)=\int_{\partial^* D}M_D(x,y)\mu^*(dy)$.
		
		We now prove that the measure $\mu^*$ is concentrated on $\partial_MD$. Let $x\in U\subsub D$. If $y \in \partial^*D$, then by Theorem \ref{Martinova jezgra} $M_D(x, y)\ge\int_{D\setminus U} M_D(z, y)\omega_U^x(dz)$ and equality holds if and only if $y\in\partial_MD$. By Fubini’s theorem
		\begin{align*}
			0=f(x)-\int\limits_{D\setminus U}f(z)\omega_U^x(dz)=\int\limits_{\partial^* D}\left(M_D(x,y)-\int_{D\setminus U} M_D(z, y)\omega_U^x(dz)\right)\mu^*(dy),
		\end{align*}
		hence $\mu^*(\partial^* D\setminus \partial_MD)=0$.
		
		Now we prove uniqueness. Consider first the case $f(\,\cdot\,)=M_D(\,\cdot\,,z_0)=M_D\delta_{z_0}(\,\cdot\,)$ and suppose that there is another measure $\mu$ on $\partial_MD$ such that $f=M_D\mu$. If $z_0$ is finite, then the uniqueness is proved in the same way as in \cite{bogdan_est_and_struct}. Therefore, we deal with the case $z_0=\infty$.
		For $s>0$ define $D_s=D\cap B_s$ and take $R>0$ such that \eqref{(3.4)} is true, i.e.  $M_D(x,\infty)=\mathbb{E}_x[M_D(X_{\tau_{D_R}},\infty)]$, $x\in D_R$. Define  the  function $g:\R^d\to[0,\infty)$ as $g(x)=\int_{|y|<R}M_D(x,y)\mu(dy)$. For $x\in D\setminus D_{2R}$, by Fubini's theorem and the comment about \eqref{(3.14)} in Remark \ref{o mvp za martinove}, we have that
		\begin{align*}
			\int_{D_{2R}} g(z)\omega_{D\setminus D_{2R}}^x(dz)&=\int_{|y|<R}\left(\int_{D_{2R}}M_D(z,y)\omega_{D\setminus D_{2R}}^x(dz)\right)\mu(dy)\\
			&=\int_{|y|<R}M_D(x,y)\mu(dy)=g(x).
		\end{align*}
		Also, for $x\in D_{3R}$ we have $g(x)=\int_{D\setminus D_{3R}} g(z)\omega_{D_{3R}}^x(dz)$. Indeed, $g\le f$ and $f(x)=\int_{D\setminus D_{3R}} f(z)\omega_{D_{3R}}^x(dz)$ because of \eqref{(3.4)} so Lemma \ref{lemma 9} yields the claim.	Lemma \ref{lemma 10} yields $g=0$ on whole $D$, in particular $g(x_0)=\mu(\{|y|<R\})=0$. Since this is true for all big $R>0$, we see that $\mu$ is concentrated at the point at infinity. Thus, we have uniqueness for the function $f(\,\cdot\,)=M_D(\cdot,\infty)$.
		
		Consider now $f=M_D\mu$ for a finite measure $\mu$  on $\partial_MD$  and let $(\eta_{D_n}f)_n$ be the corresponding sequence of measures for $f$ from the beginning of the proof. We want to show that $\mu^*=\mu$. Since $(\eta_{D_n}f)_n$ converges weakly to $\mu^*$, by uniqueness of the weak limit it is enough to show that for every relatively open set $A\subset \overline{D}$ we have $\liminf_n\eta_{D_n}f(A)\ge \mu(A)$. To this end, using Fubini's theorem, Fatou's lemma, and what was already proven for the case of  the  Dirac measures we have
		\begin{align*}
			&\liminf_{n\to\infty}\eta_{D_n}f(A)=\liminf_{n\to\infty}	\int\limits_{A}G_{D_n}(x_0,v)\left(\int\limits_{D\setminus D_n}j(v,y)M_D\mu(y)dy\right)dv\\
			&=\liminf_{n\to\infty}\int\limits_{\partial_MD}\left(\int\limits_{A}G_{D_n}(x_0,v)\left(\int\limits_{D\setminus D_n}j(|v-y|)M_D(y,z)dy\right)dv\right)\mu(dz)\\
			&\ge \int\limits_{\partial_MD}\liminf_{n\to\infty}\left(\int\limits_{A}G_{D_n}(x_0,v)\left(\int\limits_{D\setminus D_n}j(|v-y|)M_D(y,z)dy\right)dv\right)\mu(dz)\\
			&=\int\limits_{\partial_MD}\liminf_{n\to\infty}\eta_{D_n}\big(M_D(\cdot,z)\big)
			(A)\mu(dz)\ge \int\limits_{\partial_MD}\delta_z(A)\mu(dz)=\mu(A).
		\end{align*}
		Thus, we have proved uniqueness.
		
		Notice that due to uniqueness of the measure $\mu$, any choice of the sequence $(D_n)_n$ from the beginning of the proof gives $\mu$ as the limit of $\eta_{D_n}f$ so we have proved that $W_Df$ is well defined and that $W_Df=\mu$. 
	\end{proof}
	
	\begin{rem}
		Since for a finite measure $\mu$ on $\partial_MD$ we have that $M_D\mu$ is $L$-harmonic with zero outer charge, we have that $M_D\mu\in C^\infty(D)\cap \mathcal{L}^1$ and if $D$ is bounded we have $M_D\mu\in L^1(D)$, see Lemma \ref{l:harmonijske su u L^1}, and  Theorem  \ref{r:neprekidnost harmoncijskih Cinfty}.
	\end{rem}
	
	Combining Propositions \ref{lemma 1.17_bodi et ali} and \ref{lemma 14}, we get that (under the additional assumptions \ref{H2} and \ref{E2} if $D$ is unbounded)
	\begin{align}\phantomsection\label{ne znam kako bih ovo nazvao}
		W_D[G_Df+P_D\lambda+M_D\mu]=\mu.
	\end{align}
	
	\begin{cor}\phantomsection\label{c:beskonacnost od M_Dmu}
		Let $D$ be an open set. If $D$ is unbounded suppose \ref{H2} and \ref{E2}. Let $\mu$ be a measure on $\partial_MD$. $M_D\mu(x)=\infty$ for some $x\in D$ if and only if $M_D\mu\equiv\infty$ in $D$, and in that case $\mu$ is an infinite measure.
	\end{cor}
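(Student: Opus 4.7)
The ``if'' direction is immediate from the normalization $M_D(x_0,z)=1$ for all $z\in\partial^*D$: if $M_D\mu\equiv\infty$ on $D$, then in particular $\mu(\partial_MD)=M_D\mu(x_0)=\infty$, so $\mu$ is infinite. For the ``only if'' direction, suppose $M_D\mu(x_1)=\infty$ for some $x_1\in D$. If $\mu$ were finite, then by the second claim of Proposition \ref{lemma 14} the function $M_D\mu$ would be $L$-harmonic in $D$ with zero outer charge, so Theorem \ref{r:neprekidnost harmoncijskih Cinfty} would give $M_D\mu\in C^\infty(D)$, contradicting $M_D\mu(x_1)=\infty$. Hence $\mu$ is infinite, and since $M_D(x_0,z)=1$ we conclude $M_D\mu(x_0)=\mu(\partial_MD)=\infty$.

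To propagate this infinity from $x_0$ to an arbitrary $x\in D$, I would combine a uniform Harnack estimate near $x_0$ with the mean-value property for the Poisson operator. First, fix $\varepsilon>0$ with $\overline{B(x_0,2\varepsilon)}\subset D$, and apply the Harnack inequality of \cite[Theorem 7]{grzywny_harnack} to the family of non-negative $L$-harmonic functions $\{y\mapsto M_D(y,z)\}_{z\in\partial_MD}$ (extended by $0$ on $D^c$). Since the Harnack constant depends only on the ball and on $\phi$, not on the particular harmonic function, this yields a single constant $c=c(\varepsilon,x_0,\phi)>0$ with $M_D(y,z)\ge c\,M_D(x_0,z)=c$ for every $y\in B(x_0,\varepsilon)$ and every $z\in\partial_MD$. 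Integrating against $\mu$ gives $M_D\mu(y)\ge c\,\mu(\partial_MD)=\infty$ for all $y\in B(x_0,\varepsilon)$.

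Now for $x\in D\setminus\{x_0\}$, pick $r>0$ so small that $\overline{B(x,r)}\subset D$, $x_0\notin\overline{B(x,r)}$, and (after shrinking $\varepsilon$ if necessary) $B(x_0,\varepsilon)\subset D\setminus\overline{B(x,r)}$. Setting $U=B(x,r)$, Theorem \ref{Martinova jezgra}(a) gives $M_D(x,z)=\int_{D\setminus U}M_D(y,z)\,\omega_U^x(dy)$ for every $z\in\partial_MD$; integrating in $z$ against $\mu$ and swapping the order of integration by Tonelli produces the mean-value identity
$$M_D\mu(x)=\int_{D\setminus U}M_D\mu(y)\,\omega_U^x(dy).$$
Since $U$ is Lipschitz, \eqref{eq:PD Lipschitz set} gives $\omega_U^x(dy)=P_U(x,y)\,dy$ on $U^c$, and Proposition \ref{Poisson_continuity} ensures $P_U(x,y)>0$ on $\overline U^c$. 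Restricting the integral to $B(x_0,\varepsilon)\subset D\setminus\overline U$, where $M_D\mu\equiv\infty$ by the previous step, forces $M_D\mu(x)=\infty$, completing the proof.

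The only substantive point is the uniformity in $z$ of the Harnack bound of the second paragraph; this is automatic because the Harnack constant in \cite[Theorem 7]{grzywny_harnack} depends only on the geometry of the ball and on the scaling of $\phi$, so a single constant works simultaneously for the whole family $\{M_D(\cdot,z)\}_{z\in\partial_MD}$ before integrating against $\mu$. The rest is bookkeeping with the mean-value property and strict positivity of the Poisson kernel on $\overline U^c$.
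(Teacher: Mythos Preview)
Your argument is correct, but it takes a considerably longer route than the paper's. The paper's proof is essentially one line: by Lemma~\ref{lemma 12} (together with translation invariance and Remark~\ref{nepre. Martin}), for each fixed $x\in D$ the map $\partial^*D\ni z\mapsto M_D(x,z)$ is continuous and strictly positive on the compact set $\partial^*D$, hence there exist $0<c_1(x)\le c_2(x)<\infty$ with $c_1(x)\le M_D(x,z)\le c_2(x)$ for all $z\in\partial_MD$. Integrating against $\mu$ yields $c_1(x)\,\mu(\partial_MD)\le M_D\mu(x)\le c_2(x)\,\mu(\partial_MD)$, so $M_D\mu(x)=\infty$ if and only if $\mu(\partial_MD)=\infty$, independently of $x$.

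Your approach instead (i) invokes Proposition~\ref{lemma 14} and $C^\infty$-regularity to force $\mu$ infinite by contradiction, (ii) uses the uniform Harnack inequality in the \emph{first} variable to get $M_D\mu\equiv\infty$ on a ball near $x_0$, and (iii) propagates via the mean-value property and positivity of the Poisson kernel. This works, but note that your Harnack step already delivers exactly what the paper extracts from Lemma~\ref{lemma 12}: a two-sided bound $c\le M_D(y,z)\le C$ uniform in $z$, from which $c\,\mu(\partial_MD)\le M_D\mu(y)\le C\,\mu(\partial_MD)$ follows immediately by integration. Had you applied this at a general $x$ (via a Harnack chain connecting $x$ to $x_0$ inside $D$), steps (i) and (iii) would have been unnecessary. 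One minor caution regarding step~(i): the second claim of Proposition~\ref{lemma 14} asserts that $M_D\mu$ is $L$-harmonic, which by Definition~\ref{d:L-harmonic function} presupposes that $M_D\mu$ is real-valued; this finiteness is precisely what the Corollary is establishing, so your appeal to Proposition~\ref{lemma 14} is a bit delicate (though ultimately harmless, since the underlying justification again reduces to the boundedness of $M_D(x,\cdot)$).
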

	\begin{proof}
		Lemma \ref{lemma 12} yields that $\partial^*D \ni z\mapsto M_D(x,z)$ is bounded from below and above for every $x\in D$. Now the claim easily follows.
	\end{proof}
	
	\begin{thm}[Representation of non-negative $L$-harmonic functions]\phantomsection\label{t:reprezentacija_L_harm}
		Let $D$ be an open set. If $D$ is unbounded  additionally assume \ref{H2} and \ref{E2}. If $f$ is a non-negative function, $L$-harmonic in $D$ with  a  non-negative outer charge $\lambda$, then
		there is a unique finite measure $\mu_f$ on $\partial_MD$ such that $f = P_D\lambda + M_D\mu_f$ on $D$.
	\end{thm}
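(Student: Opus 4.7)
The plan is to write $f=P_D\lambda+(f-P_D\lambda)$ and to identify the second summand as a non-negative $L$-harmonic function in $D$ with zero outer charge, so that Proposition \ref{lemma 14} applies and produces the unique finite measure $\mu_f$ on $\partial_MD$. For this strategy to even make sense, the first task is to show that $P_D\lambda(x)$ is finite (and pointwise dominated by $f$) at every $x\in D$. I would fix an increasing sequence of Lipschitz sets $U_n\subsub D$ with $\bigcup_n U_n=D$ and apply the mean-value property \eqref{mean-value-property} with $U=U_n$. Since $f\ge 0$ and $\lambda\ge 0$, both terms on the right are non-negative, so
\begin{align*}
\int_{D^c}P_{U_n}(x,y)\lambda(dy)\le f(x),\qquad x\in U_n.
\end{align*}
Monotone convergence combined with $P_{U_n}(x,y)=\int_{U_n}G_{U_n}(x,w)j(|w-y|)\,dw\uparrow P_D(x,y)$ (which follows from $G_{U_n}\uparrow G_D$ off the polar set of irregular points, as noted after \eqref{eq:GD SMP}) then yields $P_D\lambda(x)\le f(x)<\infty$ for all $x\in D$.

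With finiteness in hand, define $g:=f-P_D\lambda\ge 0$ on $D$. For any $U\subsub D$ and $x\in U$, the mean-value property for $f$ and the analogous identity \eqref{eq:mvf for PD} for $P_D\lambda$ from Corollary \ref{c:P_D harm} share exactly the same ``outer charge'' contribution $\int_{D^c}P_U(x,y)\lambda(dy)$. Subtracting the two identities gives
\begin{align*}
g(x)=\int_{D\setminus U}\bigl[f(y)-P_D\lambda(y)\bigr]\omega_U^x(dy)=\int_{D\setminus U}g(y)\,\omega_U^x(dy),
\end{align*}
so $g$ is $L$-harmonic in $D$ with zero outer charge. Proposition \ref{lemma 14} (invoked under the extra assumptions \ref{H2} and \ref{E2} when $D$ is unbounded) then produces a unique finite non-negative measure $\mu_f$ on $\partial_MD$ with $g=M_D\mu_f$, proving the representation $f=P_D\lambda+M_D\mu_f$.

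Uniqueness of $\mu_f$ within the decomposition is immediate: if $P_D\lambda+M_D\mu_1=P_D\lambda+M_D\mu_2$, then $M_D\mu_1=M_D\mu_2$ and Proposition \ref{lemma 14} forces $\mu_1=\mu_2$. Alternatively, applying $W_D$ and using \eqref{ne znam kako bih ovo nazvao} gives the cleaner identification $\mu_f=W_Df$. The main obstacle I anticipate is Step~1, the pointwise domination $P_D\lambda\le f$, because it must be extracted from the mean-value property via the limiting procedure $U_n\uparrow D$ and relies on the monotone convergence of $G_{U_n}$ to $G_D$ outside a polar set; everything else is a fairly mechanical consequence of the already-established machinery (Corollary \ref{c:P_D harm}, Proposition \ref{lemma 14}, and the operator $W_D$).
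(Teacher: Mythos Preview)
Your argument is correct and is precisely the standard route that the paper defers to via \cite[Lemma 13]{bogdan_est_and_struct}: dominate $P_D\lambda$ by $f$ through the monotone limit $P_{U_n}\lambda\uparrow P_D\lambda$, subtract using \eqref{eq:mvf for PD} to obtain a non-negative $L$-harmonic function with zero outer charge, and invoke Proposition \ref{lemma 14}. There is nothing to add.
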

	\begin{proof}
		The proof is the same as in \cite[Lemma 13]{bogdan_est_and_struct}.
	\end{proof}
	
	\subsection*{Acknowledgement}
	
	This research was supported in part by the Croatian Science Foundation under the project 4197. The author would like to thank Prof. Zoran Vondraček and Prof. Vanja Wagner for many discussions on the topic and for
	helpful comments on the presentation of the results.  The author would also like to thank the Referees for helpful comments that improved the presentation and the results. 
	
	%%%%%%%%%%%%%%%%%%%%%%%%%%%%%%%%%%%%%%%%%%%%%%%%%%%%%%%%%%%%%%%%%%%%%%%%%%
	%%%%%                  		BIBLIOGRAPHY							%%%%%%
	%%%%%%%%%%%%%%%%%%%%%%%%%%%%%%%%%%%%%%%%%%%%%%%%%%%%%%%%%%%%%%%%%%%%%%%%%%

	\bibliographystyle{abbrv}
	\bibliography{biblio1}
	
	\bigskip
	
	\noindent{\bf Ivan Bio\v{c}i\'c}
	
	\noindent Department of Mathematics, Faculty of Science, University of Zagreb, Zagreb, Croatia,
	
	\noindent Email: \texttt{ibiocic@math.hr}

\end{document}